\pdfoutput=1 
\documentclass[reqno,12pt]{amsart}

\usepackage[dvipsnames]{xcolor}
\usepackage{microtype} 
\usepackage{amssymb,amsmath,amsthm,amsbsy,
 graphicx,
float, 
}
\usepackage{caption}
\usepackage{tikz}
\usetikzlibrary{decorations.pathreplacing,decorations.markings}
\tikzset{->-/.style={decoration={
  markings,
  mark=at position #1 with {\arrow{stealth}}},postaction={decorate}}}

\usepackage[margin=1in,letterpaper,portrait]{geometry}

\usepackage[shortlabels]{enumitem}

\usepackage[colorlinks=true, pdfstartview=FitV, linkcolor=purple, citecolor=blue,
filecolor=violet, urlcolor=violet]{hyperref}

\usepackage[all,cmtip]{xy}

\theoremstyle{definition} 
\newtheorem{theorem}{Theorem}[section] 
\newtheorem*{thm}{Theorem}
\newtheorem{lemma}[theorem]{Lemma}

\newtheorem{proposition}[theorem]{Proposition}
\newtheorem{corollary}[theorem]{Corollary}
\newtheorem{notation}[theorem]{Notation} 

\newtheorem{definition}[theorem]{Definition}
\newtheorem{example}[theorem]{Example}
\newtheorem{remark}[theorem]{Remark}
\numberwithin{equation}{section}
\counterwithout{figure}{section}


\usepackage[utf8]{inputenc}  
\usepackage[T1]{fontenc}

\newcommand{\MM}{\mathcal{M}}
\DeclareMathOperator{\cc}{CC}
\DeclareMathOperator{\gr}{Gr}
\DeclareMathOperator{\quid}{\mathcal Q}
\DeclareMathOperator{\tr}{tr}

\DeclareMathOperator{\punctureP}{\sf{P}}
\DeclareMathOperator{\punctureQ}{\sf{Q}}

\newcommand{\Quiver}{Q} 
\newcommand{\Subquiver}{Q'} 
\newcommand{\AnnulusQuiver}{Q'}

\newcommand{\rep}{{\rm rep\,}}
\newcommand{\mmod}{{\rm mod\,}}

\newcommand{\ag}{a}

\title{Infinite friezes of affine type D}

\author{Karin Baur, L\'ea\ Bittmann, Emily Gunawan, Gordana Todorov, Emine Y{\i}ld{\i}r{\i}m}

\address{University of Leeds, Ruhr-Universit\"at Bochum}
\address{Universit\'e de Strasbourg}
\address{University of Massachusetts Lowell}
\address{Northeastern University}
\address{University of Leeds}

\date{\today}

\begin{document}

\begin{abstract}
In this article, we study infinite friezes arising from cluster categories of affine type $D$ and determine the growth coefficients for these friezes. We prove that for each affine type $D$, the friezes given by the tubes all have the same growth behavior.
\end{abstract}

\maketitle

\setcounter{tocdepth}{2} 
\tableofcontents

\bigskip 

\section*{Acknowledgements} 
KB and EY are supported by the Royal Society Wolfson Award RSWF/R1/180004. KB is supported by the EPSRC programme grant EP/W007509/1. LB was partially supported by the European Research Council (ERC) under the European Union's Horizon 2020 research and innovation programme under grant agreement No 948885 and by the Royal Society University Research Fellowship. The work on this article began during the WINART3 Workshop: Women in Noncommutative Algebra and Representation Theory at the Banff International Research Station. The authors thank the BIRS and all the organizers. The authors would like to thank the Isaac Newton Institute for Mathematical Sciences, where some part of this work has been done, for support and hospitality during the programme “Cluster algebras and representation theory” supported by EPSRC grant no. EP/R014604/1.

\section{Introduction}

We are interested in frieze patterns of integers (first studied by Conway and Coxeter in the 1970s) which arise from tubes in a cluster category of affine type $D$. 
In general, tubes in a cluster category give rise to infinite friezes. In affine type $A$, there are exactly two non-homogeneous tubes, and it was shown in \cite{BFPT19} that infinite friezes corresponding to these tubes have the same growth coefficient. In general, this is not the case --- examples with tubes of different growth behavior arise from triangulations of a sphere with three boundary components or from Grassmannian cluster categories. Note that these examples are not of finite or affine type.
However, we will show that in affine type $D$ (where there are exactly three non-homogeneous tubes), their growth behavior is uniform.

\begin{thm}[Theorem~\ref{theo:main}]
For each affine type $D$, all infinite friezes have the same growth coefficient. 
\end{thm}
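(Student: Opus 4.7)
The plan is to compute the growth coefficient of the infinite frieze associated with each of the three non-homogeneous tubes in the cluster category of $\tilde D_n$ and then verify that the three values coincide. The three tubes have ranks $(2,2,n-2)$, so the corresponding infinite friezes have periods $2$, $2$ and $n-2$. My first step is to describe the quiddity row of each tube as a list of cluster characters (via Caldero--Chapoton) evaluated on the quasi-simple objects at the mouth of the tube; the mesh relations within the tube then give closed expressions for all subsequent rows.

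Next, I would translate the growth coefficient of an infinite frieze of period $p$ into a spectral quantity. For such a frieze, consecutive entries along a fixed diagonal are related by the product of $2\times 2$ matrices
\[
M_i = \begin{pmatrix} a_i & -1 \\ 1 & 0 \end{pmatrix}
\]
built from the quiddity row $(a_1,\dots,a_p)$, so the growth coefficient is the spectral radius of $M = M_1\cdots M_p$; equivalently, up to sign, it is the larger root of $t^2 - (\tr M)\,t + \det M = 0$. The main task is then to show that the three matrices arising from the three tubes have the same dominant eigenvalue.

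The principal obstacle is that the rank-$(n-2)$ tube produces a product of $n-2$ matrices depending on $n$, while the two rank-$2$ tubes give simple $2\times 2$ products. I would first dispose of the two rank-$2$ cases by direct computation, then either induct on $n$ or use a continuant / Chebyshev-polynomial identity to evaluate $\tr(M_1\cdots M_{n-2})$ in closed form and compare with the rank-$2$ traces. An alternative, and possibly cleaner, route is to interpret the common growth coefficient as a Coxeter-theoretic quantity attached to $\tilde D_n$ (for instance, an eigenvalue of the Coxeter transformation on the radical of the Euler form, or a combinatorial invariant shared by all tubes in the tubular family), which would make tube-independence structural rather than computational; one would then verify that each transfer-matrix trace matches this invariant. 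The small-rank case $\tilde D_4$, where all three tubes have rank $2$, should serve as a sanity check and suggest the right formulation.
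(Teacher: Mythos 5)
Your framing---compute the growth coefficient of each tube's frieze and compare them via the $2\times 2$ transfer matrices built from the quiddity rows---is a reasonable starting point, and the trace of $M_1\cdots M_p$ is indeed the right quantity (note, though, that the growth coefficient is the \emph{trace}, not the spectral radius: since $\det M_i=1$ the eigenvalues are $\lambda,\lambda^{-1}$ and $s(\quid)=\lambda+\lambda^{-1}=\tr(M_1\cdots M_p)$, consistent with the Chebyshev recurrence for $s_k$). The genuine gap is that your plan stops exactly where the real work begins. The quiddity rows are not determined by $n$ alone: they depend on the chosen triangulation $T$ of the twice-punctured disk, so there is no fixed continuant identity in $n$ that one could prove by induction. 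What is needed, and is absent from the proposal, is (i) a computation of the quiddity entries of the two rank-$2$ tubes, whose quasi-simples correspond to the four tagged arcs joining the punctures; the paper obtains these from an expansion-formula result of Musiker--Schiffler--Williams, namely $f(\gamma^{(\punctureP)})=p\cdot f(\gamma)$ and $f(\gamma^{(\punctureP\punctureQ)})=pq\cdot f(\gamma)$, giving quiddity sequences $(a,apq)$ and $(ap,aq)$; and (ii) an evaluation of the growth coefficient of the rank-$(n-2)$ tube, which the paper carries out not by manipulating the quiddity transfer matrices directly but by cutting the twice-punctured disk into a triangulated annulus, invoking the fact that the growth coefficient equals the number of terms of the bracelet Laurent polynomial (equivalently, the number of submodules of the quasi-simple band module), and computing that number as the trace of a product of $2\times 2$ rank matrices, yielding $a^2pq-2$ in every case. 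Without a substitute for these two inputs, your ``direct computation'' of the rank-$2$ cases and your ``closed-form trace'' for the large tube cannot be carried out, because you have no expression for the quiddity entries in terms of common data of $T$.

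Two further problems: your alternative route via a Coxeter-theoretic invariant of $\widetilde{D}_n$ cannot work as stated, since the growth coefficient is not an invariant of the affine type alone---it is $a^2pq-2$ with $a$, $p$, $q$ read off from the triangulation, and different triangulations give different values---whereas eigenvalues of the Coxeter transformation see only the root system. Also, the theorem concerns \emph{all} infinite friezes from tubes, including the homogeneous ones; your plan addresses only the three non-homogeneous tubes, while the paper disposes of the homogeneous tubes by noting that their quasi-simple is the band module, whose submodule count is again $a^2pq-2$.
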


Friezes reside at the nexus of various mathematical structures, ranging from cluster variables in cluster algebras to representations of certain path algebras. A cluster algebra~\cite{FZ02} is a commutative algebra with distinguished generators called cluster variables. A surface cluster algebra~\cite{FST08} can be defined from a tagged triangulation; an affine type $A$ cluster algebra comes from a triangulation of an annulus (a sphere with two boundary components), and an affine type $D$ cluster algebra comes from a triangulation of a twice-punctured disk. The cluster variables correspond to (tagged) arcs in the corresponding surface. 
On the other hand, cluster categories~\cite{BMRRT06} were introduced to categorify the combinatorics of cluster algebras. The construction 
uses representations of path algebras. Throughout the paper we will be using the terms ``modules'' and ``representations'' 
interchangeably, depending on the context. Objects in a cluster category correspond to cluster variables under the cluster character map (also called Caldero--Chapoton map), or CC-map for short.

Friezes can be obtained from cluster categories by specializing the cluster character map. The fact is that the specialized CC-map counts submodules with multiplicities (arguing as in~\cite[Section 3.4]{BCJKT}),  
so we will focus 
on computing the numbers of submodules of the 
tubes. 
In order to do that we use the fact that the cluster category of affine type $D$ 
arises from a triangulation of a twice-punctured disk and that  objects of the cluster category of affine type $D$ correspond to (tagged) arcs in this surface~\cite{CL12, BQ15, QZ17, AP21}. 

The methods we use depend on the tubes: for the tube of rank $n-2$ in affine type $D_n$, we lift the setup to 
an affine type $A$
where the cluster category corresponds to a triangulation of an annulus (a sphere with two boundary components). We use the method of rank matrices 
from~\cite{Kan22}
to compute the number of submodules.

The two rank $2$ tubes in affine type $D$ arise from arcs ending at the punctures. 
For these two tubes, we use results from \cite{MSW13} about cluster variables corresponding to the quasi-simple objects at the mouth of these tubes. 

Finally, we complete our result by also considering the homogeneous tubes. Here, we use the fact that the quasi-simple object is given by a non-contractible closed curve in the annulus parallel to the boundary. 
The cluster algebra element
corresponding to this closed curve 
is the cluster character of the band module at the mouth of a homogeneous tube (i.e. the band module of quasi length $1$).

\section{Tagged triangulations and infinite friezes}

\subsection{Tagged triangulations}

In this section, we recall the notions of tagged arcs and triangulations, following~\cite[Section 2]{FST08}. Let $S$ be a connected, oriented Riemann surface with boundary. 
We also fix a finite set $\MM$ of marked points on the boundary and in the interior of $S$. The marked points in the interior are called \emph{punctures}. 
The pair $(S,\MM)$ is called a {\em marked surface}. In this article, we always assume that $(S,\MM)$ is a disk with two punctures and the boundary contains at least two marked points. 

\begin{definition}\label{def:arc}
An {\em arc} $\gamma$ in $(S,\MM)$ is a curve in $S$ with endpoints in $\MM$, considered up to isotopy fixing its endpoints. We require that 
\begin{itemize}
    \item[(i)] $\gamma$ does not cross  
    itself (however, its endpoints may coincide); 
    \item[(ii)] the interior of $\gamma$ is disjoint from the boundary of $S$; 
    \item[(iii)] $\gamma$ does not cut out an unpunctured monogon or  
    an unpunctured digon.
\end{itemize}
\end{definition}
Two arcs are called {\em compatible} if there exist representatives in their isotopy classes which do not cross. 

A {\em triangulation} of $(S,\MM)$ is a maximal collection of distinct, pairwise compatible arcs in $(S,\MM)$. Any triangulation of $(S,\MM)$ divides $S$ up into a collection of triangles. 
Note that since we work with punctured surfaces, some of the triangles may be self-folded. We call the arc inside a self-folded triangle its {\em radius} and the arc that encircles the puncture in the self-folded triangle its {\em loop}. 
An example of a triangulation of a disk with two punctures and four marked points on the boundary is on the left hand side in Figure~\ref{fig:tags-notags}; it contains two self-folded triangles.

A way to avoid self-folded triangles is to use \emph{tagged arcs} as in~\cite[Section 7]{FST08}: every loop arc of a self-folded triangle gets replaced by a tagged arc parallel to the radius, with a notch  
at the puncture. 

Let us formally define a tagged arc as follows. First, let $\gamma$ be an arc in $(S,\MM)$. This arc $\gamma$ has two ends. We mark each of its ends either as unnotched (plain) 
or as notched. This is called a \emph{tagging}. 
A {\em tagged arc} is an arc $\gamma$ with a choice of a tagging of its ends such that any endpoint at the boundary is unnotched. 
Also, if $\gamma$ is a loop which is not part of a self-folded triangle, both ends are tagged the same way. 
Two tagged arcs are shown on the right in Figure~\ref{fig:tags-notags}. 

Two different tagged arcs $\gamma_1,\gamma_2$ are said to be {\em compatible} if their underlying arcs are compatible and if the following holds: 
If the untagged versions of $\gamma_1$ and $\gamma_2$ coincide then they must have the same tagging at least at one of their common endpoints, if they do not coincide but have a common endpoint, then they are tagged the same way at that endpoint.

In certain situations it will be more convenient to use self-folded triangles (with loops and radii).  

\begin{definition}\label{def:tagged-triangulation}
    Let $(S,\MM)$ be a marked surface. 
    A {\em tagged triangulation} of $(S,\MM)$ is a maximal collection of distinct pairwise compatible tagged arcs in $(S,\MM)$. 
\end{definition}

\begin{example}
An example of a tagged triangulation is on the right hand side in Figure~\ref{fig:tags-notags}.  
\begin{figure}[ht]
\includegraphics[scale=.7]{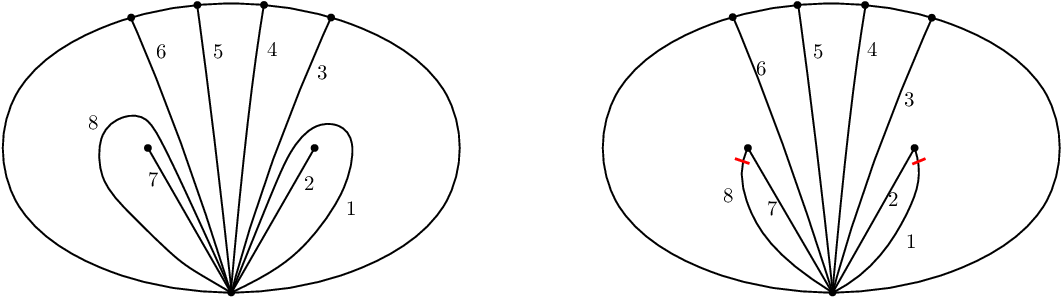}
\caption{Twice-punctured disk with triangulation and tagged triangulation}
\label{fig:tags-notags}
\end{figure} 
\end{example}

\begin{remark}\label{rem:number-of-arcs}
Any triangulation of a twice-punctured disk with $n-2$ marked points on the boundary has  $n+1$ arcs. To see this, one can use \cite[Proposition 2.10]{FST08}. 
\end{remark}

\subsection{Quivers from tagged triangulations and from arcs}\label{sec:quiver}

Given a triangulation $T$, we can write a quiver $\Quiver_T$ corresponding to $T$, called the \emph{quiver of $T$}. We draw a vertex of $\Quiver_T$ for each arc of $T$, and the arrows of $\Quiver_T$ are defined as follows:

\begin{itemize} 
\item Step 1 (draw an arrow for each angle in a non self-folded triangle):
For each of the three distinct ``ideal angles'' of every non self-folded $\triangle$ in the triangulation $T$, we do the following: Let $\angle$ denote this angle, and let   $\alpha_i,\alpha_j$ denote the distinct sides of $\triangle$ which form  $\angle$. 

If $\alpha_j$ is clockwise from $\alpha_i$ when going around $\angle$, then we draw an arrow from $i$ to $j$; and if $\alpha_i$ is clockwise from $\alpha_j$ when going around $\angle$, we draw an arrow from $j$ to $i$. Let $\alpha_\angle$ denote the arrow of $\Quiver_T$ corresponding to this angle.
\item 
Step 2 (draw arrows from or to the radius of a self-folded triangle): 
For every self-folded triangle $\triangle$, we do the following:
Let $\alpha_i$ be the loop and $\alpha_j$ the radius of $\triangle$.  
We draw an arrow $j \to k$ (from the radius to a third arc $\alpha_k$) whenever there is already an arrow $i \to k$ (from the loop to arc $\alpha_k$); similarly, we draw an arrow $k \to j$ whenever there is already an arrow $k \to i$.  
\item Step 3: We remove all two-cycles which may have been created during Step 1. 
\end{itemize}

\[
\includegraphics[scale=.8]{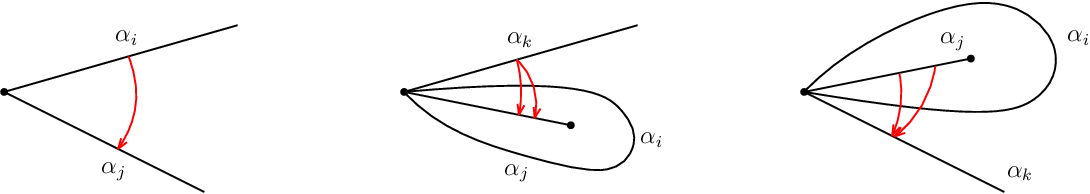}
\]

\hskip.2cm

As an example, the quiver $\Quiver_T$ for triangulation $T$ in Figure~\ref{fig:tags-notags} is the following.
\[
\xymatrix@R=.6pt@C=6pt{7\ar[rd]  &&&&& 2 \\
& 6\ar[r] & 5\ar[r] & 4\ar[r] & 3\ar[ru]\ar[rd] &\\
8\ar[ru] &&&&& 1 }
\]
As we see, the quiver $\Quiver_T$ in this case is an affine type $D$ quiver: $\widetilde{D}_7$. 

Given a triangulation $T$ and an arc $\gamma$ in the surface which is not in $T$, we can write a quiver $\Quiver_\gamma$ corresponding to $\gamma$, called \emph{the quiver of $\gamma$}. 
First, we choose representatives in the isotopy classes so that $\gamma$ crosses each arc of $T$ a minimum number times. The vertices $i$ of $\Quiver_\gamma$ correspond to the arcs $\alpha_i$ of $T$ which are crossed by $\gamma$. 
The arrows of $\Quiver_\gamma$ are defined as follows: 
\begin{itemize}
\item 
Step 1 (take an arrow for each angle of a non self-folded triangle crossed by $\gamma$):
For each angle $\angle$ crossed by $\gamma$ such that $\angle$ belongs to  a non self-folded triangle, we do the following.

\begin{enumerate}[(a)]
\item If $\Quiver_T$ contains the arrow $\alpha_\angle$, then take $\alpha_\angle$ to also be an arrow in $\Quiver_\gamma$. 
\item If $\Quiver_T$ contains no arrow $\alpha_\angle$, we do nothing in this step. (The reason we might be in this situation is that a two-cycle between the vertices was removed in Step 3 of the construction of the quiver $\Quiver_T$.) 
\end{enumerate}
\item Step 2 (take arrows from or to the radius of a self-folded triangle): 
For each pair $i, j$ of distinct vertices of $\Quiver_\gamma$ such that 
 $\alpha_i$ and $\alpha_j$ are the arcs of a self-folded triangle $\alpha$, say $\alpha_i$ the loop and $\alpha_j$ the radius, this means that $\gamma$ crosses $\triangle$ at least once in such a way that $\gamma$ crosses the loop $\alpha_i$, then the radius $\alpha_j$, then the loop again. In this case, as in Step 2 of the previous definition, we take the arrow $j \to k$ (from the radius to a third arc $\alpha_k$) to be an arrow in $\Quiver_\gamma$ whenever there is already an arrow $i \to k$ (from the loop to arc $\alpha_k$) in $\Quiver_\gamma$; similarly, we take the arrow $k \to j$ whenever there is already an arrow $k \to i$ in $\Quiver_\gamma$.  
\end{itemize}

See the right hand of Figure~\ref{fig:regions} for two quivers of arcs.

We will not associate vertices to the boundary segments of the triangulation. Such vertices are called frozen vertices in the literature and we will not consider quivers with frozen vertices. 
We note that any triangulation of a twice-punctured disk will give rise to a quiver which is mutation equivalent to an affine type $D$ quiver~\cite[Example 6.10]
{FST08}.

\subsection{Friezes} 

In this subsection, 
we recall the definition and some important statements about friezes. In the next subsection (Subsection~\ref{sec:growth-coeff})
we talk about an invariant for friezes called \emph{growth coefficient}.

\begin{definition} A \emph{frieze} (or \emph{frieze pattern}) is a staggered array of possibly infinitely many rows of integers, 
starting with a row of $0$s and a row of $1$s, followed by either a finite number of 
rows of positive integers, and end with a row of $1$s followed by a row of $0$s, or by infinitely many rows 
of positive integers, satisfying the so-called \emph{diamond rule} (or \emph{SL$_2$-rule}): $ad-bc=1$ is satisfied for any four entries 
$$
\xymatrix@R=.6pt@C=1pt{
 & b \\
a && d \\
 & c}
$$
\end{definition} 

If the frieze contains only finitely many rows it is called {\em finite (or closed) frieze}. 
The rows of $0$s and of $1$s are called the {\em trivial rows}. 
By the diamond rule, any three neighboring entries determine 
the fourth. In particular, starting from the top, the first non-trivial row determines  
the whole frieze. 

Friezes, or sometimes called Conway-Coxeter friezes, were introduced by Coxeter in~\cite{C71} and were studied by Conway and Coxeter in the celebrated work~\cite{CC73} where they showed that any finite frieze has a translational symmetry. 
If the translational 
symmetry has order $m$ (i.e. if $m$ is the {\em smallest} positive integer such that the frieze is invariant under horizontal translation by $m$), then any $m$ consecutive entries in the first non-trivial row can 
be used to 
determine the frieze. 
This motivated the definition of a {\em quiddity sequence} of a frieze: 
as 
any $n$-tuple formed by $n$ consecutive entries in the first non-trivial row which can be used to generate the entire frieze (``quiddity'' for 
essence of the frieze). Note that $n$ can be a multiple of $m$ (since $m$ is minimal).

In particular, Conway and Coxeter established a link between finite friezes and triangulations of convex polygons through the following construction. By an $n$-gon we mean a convex polygon with $n$ vertices, labeled counterclockwise by $1,2,\dots, n$ around the boundary.

\begin{theorem}[\cite{CC73}]\label{thm:cc}
The following is a bijection between friezes with $n-3$ non-trivial rows and triangulations of $n$-gons: Let $T$ be a triangulation of the $n$-gon and let $a_i$ be the number of triangles of $T$ which are incident to vertex $i$, for 
$i=1,\dots,n$. Then $(a_1,a_2,\dots,a_n)$ is the quiddity sequence of a frieze with 
$n-3$ non-trivial rows, and every such frieze arises from a triangulated $n$-gon.
\end{theorem}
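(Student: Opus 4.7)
The plan is to proceed by induction on $n$, using an ear-removal operation on triangulations paired with a column-insertion operation on friezes. The base case is $n=3$: the unique triangulation of a triangle gives the sequence $(1,1,1)$, and this generates the degenerate frieze with $n-3=0$ non-trivial rows. For the inductive step, I assume the correspondence for $n-1$.

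For the forward direction I would first recall that every triangulation of an $n$-gon with $n\geq 4$ has at least one \emph{ear}, i.e.\ a vertex $i$ incident to a unique triangle; equivalently $a_i=1$. Cutting this ear off yields a triangulation $T'$ of the $(n-1)$-gon on the remaining vertices, with quiddity sequence obtained from $(a_1,\dots,a_n)$ by deleting $a_i$ and subtracting $1$ from each of its two neighbours, since those two vertices each lose exactly one adjacent triangle. By the induction hypothesis, $T'$ generates a valid frieze $F'$ with $n-4$ non-trivial rows. Reinserting the ear corresponds, at the frieze level, to inserting a diagonal column initialised with a $1$ in the first non-trivial row; the diamond rule then propagates this column, automatically raising the two neighbouring quiddity entries by $1$ and producing a frieze with one additional non-trivial row. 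One checks inductively that all new entries are positive integers, that the bottom non-trivial row is a row of $1$s, and therefore that the resulting frieze has exactly $n-3$ non-trivial rows.

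For the reverse direction, given a frieze with quiddity $(a_1,\dots,a_n)$ and $n-3$ non-trivial rows, the key lemma is that some $a_i$ equals $1$. This can be shown by a monotonicity argument: if every $a_i \geq 2$, the SL$_2$-rule forces entries along successive diagonals to grow strictly, contradicting the fact that the frieze closes after $n-3$ rows with a row of $1$s. Once such an $a_i=1$ is found, deleting the $i$-th column and decrementing its two neighbours by $1$ yields a frieze with $n-4$ non-trivial rows; applying the induction hypothesis gives a triangulation of an $(n-1)$-gon, and reattaching the ear at vertex $i$ produces the desired triangulation of the $n$-gon. Uniqueness is automatic, since the whole frieze is determined by its quiddity sequence.

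The main obstacle, and the step I would package most carefully, is verifying that ear-insertion on friezes preserves the diamond rule together with positivity and integrality. The cleanest way to do this is through Ptolemy relations: label each diagonal and boundary edge of the $n$-gon and identify the frieze entry at a given position with the length of the corresponding diagonal when the polygon is realised as an ideal polygon. The diamond rule then becomes the classical Ptolemy identity for a quadrilateral cut by its two diagonals, while positive integrality follows from a combinatorial interpretation of the entries, for instance as numbers of perfect matchings of an associated snake graph. Organising the induction around this bijection keeps the SL$_2$-rule, integrality, and translational periodicity all transparent in one stroke.
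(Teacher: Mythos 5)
The paper does not prove this statement at all: it is quoted verbatim from Conway and Coxeter \cite{CC73} and used as background, so there is no in-paper argument to compare against. Your proposal is essentially the classical Conway--Coxeter proof: induction on $n$ via ear removal, the combinatorial fact that every triangulation of an $n$-gon ($n\ge 4$) has a vertex with $a_i=1$, and the frieze-side counterpart that a closed frieze of positive integers must have a $1$ in its quiddity row (your diagonal-growth argument is the standard one, since entries along a diagonal satisfy $m_{k+1}=a_km_k-m_{k-1}$, so $a_k\ge 2$ for all $k$ forces strict growth and the frieze could never close). The outline is correct; two points deserve more care if you write it up. First, the deletion step in the reverse direction (removing the column at an entry $a_i=1$ and decrementing its neighbours) needs the same verification as insertion --- that the resulting array is again a frieze --- and this is where the actual work of the induction lives; you flag it for insertion but should treat both directions symmetrically. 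Second, be careful that the Ptolemy/perfect-matching route you suggest for positivity and integrality is set up from the triangulation side (define the candidate array via $\lambda$-lengths or matchings attached to $T$ and verify the diamond rule by the Ptolemy identity or Kuo condensation), since invoking a matching formula ``for frieze entries'' in the abstract would presuppose the very correspondence being proved. Finally, for the bijection you should note explicitly that the quiddity sequence determines the triangulation (recover the ears from the positions of the $1$s and induct), which your uniqueness remark only gives implicitly.
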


Various generalizations of friezes have been studied. The above finite friezes can be associated to cluster algebras in Dynkin type $A$~\cite[Section 5]{CC06}. 
Friezes for other (extended) Dynkin types have been studied for example 
in~\cite{BM2009},~\cite{ARS10},~\cite{AD11},~\cite{KS11},~\cite{GM22} and~\cite{BCJKT}.

In this paper, we work with infinite periodic friezes --- friezes which contain infinitely many rows of positive integers and which have a translational symmetry. 
We refer to them as infinite friezes for short. 
Infinite friezes also arise from triangulations of surfaces, for example from punctured disks~\cite{Tsch15} and pairs of pants~\cite{CFGT21}.
More generally, every infinite (periodic) frieze can be obtained from a triangulation of an annulus: 

\begin{theorem}[Theorems 3.7 and 4.6, \cite{BPT16}]\label{thm:infinite-annulus}
(1) Every triangulation of an annulus with $n$ marked points on the outer boundary and $m$ marked points on the inner boundary gives two infinite friezes by taking as quiddity sequences the number of triangles locally incident with vertices on the outer boundary: $\quid_1=(a_1,\dots, a_n)$ and the number of triangles locally incident with vertices on the inner boundary: 
$\quid_2=(b_1,\dots, b_m)$. 

(2) Let $\quid=(a_1,\dots, a_n)$
be the quiddity sequence of an infinite frieze. Then there exists a triangulation of an annulus with $n$ marked points on the outer boundary such that the $a_i$ are the number of triangles at the vertices of the outer boundary.     
\end{theorem}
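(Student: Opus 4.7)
The plan is to reduce both parts of the theorem to an infinite analogue of the Conway-Coxeter theorem (Theorem~\ref{thm:cc}) by passing to the universal cover of the annulus. The universal cover is an infinite strip $\widetilde{A} = \mathbb{R} \times [0,1]$ on which the marked points and any triangulation lift $\mathbb{Z}$-equivariantly, converting questions about finite triangulated annuli into questions about $\mathbb{Z}$-periodic triangulated strips.

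For part~(1), I would lift the triangulation $T$ to a $\mathbb{Z}$-equivariant triangulation $\widetilde T$ of $\widetilde A$, with lifted outer boundary vertices $(v_i)_{i\in\mathbb Z}$ so that the deck transformation sends $v_i$ to $v_{i+n}$, and then set $a_i := \#\{\triangle\in\widetilde T : v_i\in\triangle\}$, which is $n$-periodic by equivariance. I would define the candidate frieze by placing the $a_i$'s as the first non-trivial row and propagating via the SL$_2$ diamond rule. To verify that all resulting entries are positive integers, I would identify the $(k,i)$-entry with a combinatorial invariant of the arc $\gamma_{i,i+k+1}$ joining $v_i$ to $v_{i+k+1}$, for instance the number of perfect matchings of the snake graph of this arc in the sense of~\cite{MSW13}, which is a positive integer by construction. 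The diamond relation $ad-bc=1$ then becomes the Ptolemy relation among the four arcs sharing endpoints $v_i, v_{i+1}, v_{i+k+1}, v_{i+k+2}$. An analogous argument on the inner boundary yields the second frieze.

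For part~(2), given a quiddity sequence $\mathcal{Q}=(a_1,\dots,a_n)$ of an infinite frieze, I would extend it $n$-periodically and construct a $\mathbb{Z}$-equivariant triangulation of a strip realizing $(a_i)$ as the degree sequence on one boundary, through iterated ear-removal: at each stage, locate an index $i$ with $a_i = 1$ (performing the operation simultaneously at all $\mathbb{Z}$-translates), insert the triangle $v_{i-1}v_iv_{i+1}$, delete $v_i$, and decrease $a_{i\pm1}$ by one. When no such $i$ remains within a period, the surviving outer vertices must be joined by radial arcs to additional marked points on the opposite boundary; these additional points form the inner boundary of the annulus. Quotienting by the $\mathbb{Z}$-shift yields the desired triangulation, with $m$ determined by the number of inner points in one period.

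The main obstacle is proving that the ear-removal procedure in part~(2) terminates in a valid annulus triangulation, or equivalently, that every infinite periodic frieze's quiddity sequence is realizable. One must rule out pathological sequences by showing that the reduced sequence at each stage remains the quiddity sequence of a (shorter-period) infinite frieze. The key estimate is that the average $\tfrac{1}{n}\sum a_i$ strictly exceeds $2$, which on the one hand forces sufficiently many $a_i = 1$'s to appear so that induction can proceed, and on the other hand distinguishes infinite friezes from finite ones (where $\sum a_i = 3n - 6$ on an $n$-gon yields average less than $3$ and forces termination in an unpunctured polygon). It is exactly in this estimate that positivity of the entire frieze, not merely of the first non-trivial row, enters in a crucial way.
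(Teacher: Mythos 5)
First, a remark on scope: the paper does not prove Theorem~\ref{thm:infinite-annulus} at all --- it is quoted from \cite{BPT16} (Theorems 3.7 and 4.6 there) --- so there is no internal proof to compare against, and your argument has to stand on its own. Your outline for part (1) is viable and close to the surface-model proofs in the literature: lifting to the $\mathbb{Z}$-periodic strip, identifying the row-$k$ entries with specialized cluster variables (matching numbers) of the arcs $\gamma_{i,i+k+1}$, and reading the diamond rule as the Ptolemy relation all work, modulo the routine checks that each such arc crosses only finitely many arcs of $\widetilde T$ (so every identity can be verified inside a finite triangulated polygon) and that the matching number of $\gamma_{i,i+2}$ really equals the local triangle count $a_i$.

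Part (2) has a genuine gap at its central step, and the ``key estimate'' you isolate is both false and pointing in the wrong direction. The quiddity sequence of an infinite frieze need not contain \emph{any} entry equal to $1$ --- the paper's own examples $(4,2,2)$, $(2,3)$, $(4)$, $(5)$, $(6)$ have none --- so an average exceeding $2$ cannot ``force sufficiently many $a_i=1$'s to appear,'' and ear removal cannot be the engine of the induction. It is only a preprocessing step: it terminates after at most one step per deleted vertex, and it terminates precisely at a $1$-free sequence. The entire content of the realizability statement is therefore the base case you never address: showing that a periodic sequence with all entries $\ge 2$ is realized by an ear-free triangulation in which vertex $i$ carries $a_i-1$ radial arcs, where the arc count forces the inner boundary to have exactly $m=\sum a_i-2n$ marked points. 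This is where $\sum a_i>2n$ actually enters --- as a constraint making the construction possible, not as a source of $1$'s --- and it exposes a case your procedure cannot handle: the arithmetic frieze with quiddity $(2,\dots,2)$, and anything reducing to it such as $(1,4)$, has $\sum a_i=2n$, hence $m=0$, and is in fact realized by a triangulated once-punctured disc rather than an annulus; this degenerate case is treated separately in \cite{BPT16} and any complete proof must either do the same or explain how it is absorbed into the annulus statement. Finally, the place where positivity of the whole frieze genuinely enters is a lemma you have not supplied: that after removing an ear the reduced sequence is again the quiddity sequence of an infinite frieze (in particular, that an infinite frieze has no two adjacent $1$'s in its quiddity row, so no entry drops to $0$ under reduction). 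Without that lemma the induction does not close, and with it the role of the average-$2$ bound is quite different from what you describe.
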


In part (1) of Theorem~\ref{thm:infinite-annulus} we wrote ``locally incident'' to emphasize that we have to consider the triangles seen in a small neighborhood of any vertex. See Section~\ref{sec:affine-friezes} and in particular Remark~\ref{rem:dots} for more details.

When writing out a frieze, we often omit the row of $0$s at the top. 
We label the rows starting with the constant row of $1$s (as row $0$), and the quiddity sequence is in row $1$.

\begin{example}\label{ex:annulus-frieze}
Here is an example of an infinite 
frieze with quiddity sequence $(4,2,2)$ of length $n=3$. We omit the trivial row of $0$s.

\[
\begin{tikzpicture}[scale=.6, inner sep=4pt]

\node at (-11,5) {Row $0$};
\node at (-11,4) {Row $1$};
\node at (-11,3) {Row $2$};
\node at (-11,2) {Row $3$};
\node at (-11,1) {Row $4$};
\node at (-11,0) {Row $5$};

\node (10) at (-7,5) {$\ldots$};
\node (11) at (-5,5) {1};
\node (12) at (-3,5) {1};
\node (12) at (-1,5) {1};
\node (12) at (1,5) {1};
\node (14) at (3,5) {1};
\node (15) at (5,5) {1};
\node (16) at (7,5) {$\ldots$};

\node (ai+) at (-6,4) {$\ldots$};
\node (ai)  at (-4,4) {4};
\node (ai+) at (-2,4) {2};
\node (ai+) at (0,4)  {2};
\node (aj-) at (2,4)  {4};
\node (aj)  at (4,4)  {2};
\node (ai+) at (6,4)  {2};
\node (ai+) at (8,4)  {$\ldots$};

\node (10) at (-7,3) {$\ldots$};
\node (11) at (-5,3) {7};
\node (12) at (-3,3) {7};
\node (12) at (-1,3) {3};
\node (12) at (1,3)  {7};
\node (14) at (3,3)  {7};
\node (15) at (5,3)  {3};
\node (16) at (7,3)  {$\ldots$};

\node (17) at (-6,2) {$\ldots$};
\node (18) at (-4,2) {12};
\node (19) at (-2,2) {10};
\node (20) at (0,2) {10};
\node (21) at (2,2)  {12};
\node (22) at (4,2)  {10};
\node (23) at (6,2)  {10};
\node (24) at (8,2)  {$\ldots$};

\node (10) at (-7,1) {$\ldots$};
\node (11) at (-5,1) {17};
\node (12) at (-3,1) {17};
\node (12) at (-1,1) {33};
\node (12) at (1,1)  {17};
\node (14) at (3,1)  {17};
\node (15) at (5,1)  {33};
\node (16) at (7,1)  {$\ldots$};

\node (17) at (-6,0) {$\ldots$};
\node (18) at (-4,0) {24};
\node (19) at (-2,0) {56};
\node (20) at (0,0) {56};
\node (21) at (2,0)  {24};
\node (22) at (4,0)  {56};
\node (23) at (6,0)  {56};
\node (24) at (8,0)  {$\ldots$};

\node (ai)  at (-4,-1) {$\vdots$};
\node (ai+) at (-2,-1) {$\vdots$};
\node (ai+) at (0,-1)  {$\vdots$};
\node (aj-) at (2,-1)  {$\vdots$};
\node (aj)  at (4,-1)  {$\vdots$};


\end{tikzpicture}
\]
%
A triangulation for this frieze is shown in the following figure: the number of 
triangles meeting the vertices of the outer boundary are $\quid_1=(4,2,2)$. 
\[
\includegraphics[width=5cm]{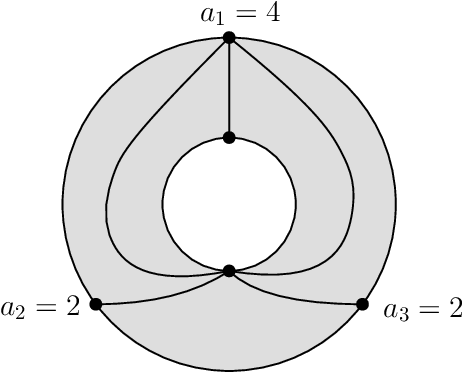}
\]
The inner boundary component yields the quiddity sequence $\quid_2=(2,5)$ of length $n=2$ for the following infinite frieze:

\[
\begin{tikzpicture}[scale=.6, inner sep=4pt]

\node at (-11,5) {Row $0$};
\node at (-11,4) {Row $1$};
\node at (-11,3) {Row $2$};
\node at (-11,2) {Row $3$};
\node at (-11,1) {Row $4$};

%
\node (10) at (-6,5) {$\ldots$};
\node (11) at (-4,5) {1};
\node (12) at (-2,5) {1};
\node (12) at (0,5) {1};
\node (12) at (2,5) {1};
\node (14) at (4,5) {1};
\node (16) at (6,5) {$\ldots$};

\node (ai+) at (-5,4) {$\ldots$};
\node (ai)  at (-3,4) {5};
\node (ai+) at (-1,4) {2};
\node (ai+) at (1,4)  {5};
\node (aj-) at (3,4)  {2};
\node (ai+) at (5,4)  {$\ldots$};

\node (10) at (-6,3) {$\ldots$};
\node (11) at (-4,3) {9};
\node (12) at (-2,3) {9};
\node (12) at (0,3) {9};
\node (12) at (2,3)  {9};
\node (14) at (4,3)  {9};
\node (16) at (6,3)  {$\ldots$};

\node (ai+) at (-5,2) {$\ldots$};
\node (ai)  at (-3,2) {16};
\node (ai+) at (-1,2) {40};
\node (ai+) at (1,2)  {16};
\node (aj-) at (3,2)  {40};
\node (ai+) at (5,4)  {$\ldots$};

\node (10) at (-6,1) {$\ldots$};
\node (11) at (-4,1) {71};
\node (12) at (-2,1) {71};
\node (12) at (0,1) {71};
\node (12) at (2,1)  {71};
\node (14) at (4,1)  {71};
\node (16) at (6,1)  {$\ldots$};

\node (ai)  at (-3,0) {$\vdots$};
\node (ai+) at (-1,0) {$\vdots$};
\node (ai+) at (1,0)  {$\vdots$};
\node (aj-) at (3,0)  {$\vdots$};


\end{tikzpicture}
\]
\end{example}

\subsection{Growth coefficients}\label{sec:growth-coeff}

It is proven in \cite[Section 2]{BFPT19} that in any $m$-periodic frieze, the difference between any entry in 
row $m$ 
and the entry right above it in row $m-2$ is constant. Therefore, the following notion is well-defined.

\begin{definition}\label{def:growth-coeff}
Let $\quid=(a_1,\dots, a_m)$ be a quiddity sequence of 
a frieze $F$. The \emph{growth coefficient} 
$s(\quid)$ 
of $F$ is the difference between any entry in row $m$ 
and the entry right above it in row $m-2$. 
If $m=1$, we consider the trivial row of $0$s as the row $-1$.
\end{definition}

We note that such constant differences occur repeatedly throughout the frieze in the following sense: 
For any $k>0$, the differences between any 
entry in row $km$ and the entry above it, in row $km-2$, are always constant. 
In fact, these constant differences are given as a Chebyshev polynomial in $s(\quid)$, by \cite[Proposition 2.10]{BFPT19}. 
Let $s_k(\quid)$ 
denote the constant 
difference between entries in row $km$ and $km-2$, for $k\ge 1$. (In particular, $s_1(\quid)=s(\quid)$.) Then if we set $s_0(\quid)=2$, we have 
\begin{equation}\label{eq:recurrence}
    s_{k+2}(\quid)=s_1(\quid)s_{k+1}(\quid)-s_k(\quid) \mbox{ for $k\ge 0$}.
\end{equation}
For example, $s_2(\quid)=s_1(\quid)s_1(\quid)-2=s(\quid)^2-2$. 

Furthermore, if the quiddity sequence has a translational symmetry (i.e. if there is a smaller translational period in the frieze, say by $m_0$, with $m=d m_0$), there are 
already constant differences in earlier rows, 
e.g. between the entries in row $m_0$ and the entries just above.

This is illustrated in Example~\ref{ex:period}.

\begin{example}\label{ex:period}
Consider the following infinite frieze: 
\[
\xymatrix@R1pt@C3pt{
 1 && 1 && 1 && 1 && 1 && 1 && 1 && 1 && 1  \\
  & 2 && 3 && 2 && 3 && 2 && 3 && 2 && 3 \\
 && 5  && 5  && 5  && 5  && 5  && 5  && 5 \\ 
 & && 8 && 12 && 8 && 12 && 8 && 12 \\
&& && 19 && 19 && 19 && 19 && 19 \\
 & && && 30 && 45 && 30 && 45 \\
 && && && 71 && 71 && 71 \\
 & && && && \vdots && \vdots
}
\]
This can be viewed as a frieze with 
quiddity sequence $\quid=(2,3)$. Since $n=2$, the growth coefficient is $s(\quid)=5-1=4$. If we view it as a frieze with quiddity sequence $\quid'=(2,3,2,3)$, then $n=4$ and the growth coefficient is $s(\quid')=19-5=14$. We note that $s(\quid')=s(\quid)^2-2$, cf. Remark~\ref{rem:repeat-q}\ref{rem:repeat-q:itm:i}: 
the growth 
coefficient $s(\quid')$ 
is equal to $s_2(\quid)$ arising from $s(\quid)$  
as in equation \eqref{eq:recurrence}. 
\end{example}

\begin{remark}\label{rem:repeat-q}
\begin{enumerate}[(i)]
\item 
\label{rem:repeat-q:itm:i}  
    If $k\cdot \quid$ denotes $k$ repetitions of the quiddity sequence $\quid$, then $s(k\cdot \quid)=s_k(\quid)$.
\item 
In~\cite[Theorem 3.4]{BFPT19}, the authors show that the two growth coefficients of the pair of friezes arising from the two boundary components of a triangulated annulus coincide. See Example~\ref{ex:annulus-frieze}: the growth coefficient of the frieze with quiddity sequence $(4,2,2)$ is $8$, since the difference between entries in 
    row $3$ 
    and the row above is $12-4=8=10-2$. 
    The second frieze has quiddity sequence $(5,2)$, and its  growth coefficient is $9-1=8$. 
\end{enumerate}
\end{remark}

We note that this phenomenon is very special. In general for triples of friezes various configurations of growth coefficients might occur. Consider the three friezes arising from a triangulation of a sphere with three boundary 
components (a pair of pants). The growth coefficients of these friezes can vary; see Example~\ref{ex:pair-pants} below. We expect that it is never the case that the three growth coefficients from a triangulation of a pair of pants are all equal. 
\begin{example}\label{ex:pair-pants}
    The following triangulation of a pair of pants gives three 1-periodic friezes with quiddity sequences $(4)$, $(5)$ and $(6)$, respectively. Their growth coefficients 
    are 
    $4-0=4,5-0=5$ and $6-0=6$ respectively, which are all 
    different.  
    \[ 
    \includegraphics[width=5cm]{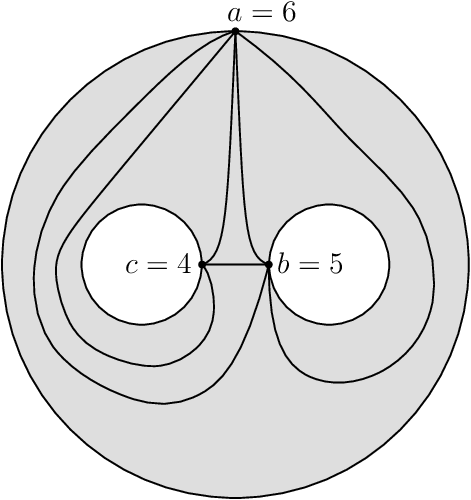}
    \]
\end{example}

However, there are cases of triples of friezes, coming from a different surface, with the same growth coefficient. 
In this paper, we prove that the growth coefficients of the triple of friezes arising in affine type $D$ (stemming from a disk with two punctures) are the same (Theorem \ref{theo:main}).

\section{Cluster and module categories of affine type $D$}~\label{clu-cat}

In this section, we recall the structure of the non-homogeneous tubes in the cluster category of affine type $D$ and then explain how we obtain three different friezes which we call \emph{affine type $D$ friezes}.

Let $k$ be an algebraically closed field. Consider an algebra $\Lambda$ of affine type $D$. We can assume that $\Lambda=kQ$ 
where the quiver $\Quiver$  
is any orientation of the following 
extended Dynkin diagram with $n+1$ vertices: 

\[
\includegraphics[width=7cm]{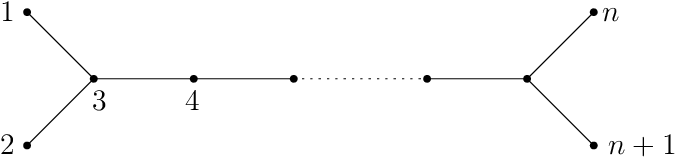}
\]

Recall that throughout the paper we will be using both terms: modules and representations; sometimes we need module-theoretic properties and sometimes it is convenient to construct representations. 
It is known that in our situation, we have an equivalence of categories between $\rep Q$ and the module category $\mmod kQ$, where $\rep Q$ is the category of finite dimensional $k$-representations of $\Quiver$ (with $k$ algebraically closed), and $\mmod kQ$ is the category of finitely generated modules over the associated path algebra $kQ$ (see for example~\cite[Chapter III, Cor.1.7]{ASS06}).

\subsection{Tubes in the cluster category of 
affine type $D$}\label{sec:three-tubes}

Module categories and cluster categories of affine type $D$ are of infinite type but still well-studied. These categories are determined by their indecomposable objects and the morphisms between them. Most of the information we need for these categories is encoded in the so-called Auslander--Reiten quiver (AR-quiver for short) of the category which has vertices for isomorphism classes of indecomposable objects and arrows for irreducible morphisms. Algebras of affine type $D$ are tame and the AR-quivers of their module categories have three non-homogeneous tubes: one of rank $n-2$, which we call 
$\mathcal{T}_1$, and two tubes of rank $2$, which we call $\mathcal{T}_2$ and $\mathcal{T}_3$ 
\cite{DR76}, see also~\cite[Section XIII, 2.2 Corollary(b)]{SS07}.

The same is true for the associated cluster category 
\[{\mathcal C}_Q=\mathcal{C}_{\Lambda}=D^b({\rm mod}\,\Lambda)/\tau^{-1}[1],\]
first defined in \cite{BMRRT06}. Here $\tau$ is the Auslander--Reiten translate on $D^b({\rm mod}\,\Lambda)$, i.e. it is equal to $\tau$ on nonprojective objects in $\rm mod\,\Lambda$ and for $P$ projective, $\tau P=I[-1]$ where $I$ is the injective envelope of the top of $P$, and $[-]$ is the shift functor in $D^b({\rm mod}\,\Lambda)$
%
We keep the 
notations $\mathcal{T}_1,\mathcal{T}_2$ and $\mathcal{T}_3$ for the three tubes in $D^b({\rm mod}\,\Lambda)$.

We recall the definition of a \emph{standard category} as a category which is equivalent to the mesh category of its AR-quiver. When an algebra is of infinite representation type, neither the module category nor the cluster category is standard. However when considering only the full subcategory of a tube, there is a difference: a tube in a module category is standard, but the same tube in a cluster category is not standard since there are homomorphisms in the infinite radical. While our geometric model is for the tubes in cluster categories, we only consider compositions of irreducible maps (these are not in the infinite radical). With this condition on homomorphisms, these tube categories are standard, 
see~\cite{BM2012}. 

\subsection{Arcs in  
a twice-punctured disk and quasi-simple modules}\label{sec:arc-model}

Geometric models have been fruitfully used in the study of cluster categories. One of the important ingredients of the proof of our main result is the surface model for affine type $D$. 
The surface which gives cluster algebras and cluster categories of affine type $D_n$ is a twice-punctured disk, with $n-2$ marked points on the boundary, see e.g.~\cite[Example 6.10]{FST08} and~\cite[Section 6]{QZ17}. 
In particular, we use the characterization of indecomposable objects of the three non-homogeneous tubes in terms of arcs in a surface, see also~\cite{BM2012} and~\cite{AP21}. 

Recall that a module at the mouth of a tube is called a \emph{quasi-simple} module. 
The quasi-simple modules give rise to the quiddity sequences of the friezes if we take their CC-map and specialize to integers (see Section~\ref{sec:affine-friezes} below). 
As in the affine type $A$ setting (see Theorem \ref{thm:infinite-annulus} above),
these integers correspond  to the number of triangles (locally) incident with the marked points on the boundary, see Figure~\ref{fig:regions}. 
We recall in this subsection the geometric descriptions of the arcs for the quasi-simple modules, see Proposition~\ref{prop:indec-tubes} and Lemma~\ref{lm:arcs at the mouth of small tubes}.

Let $(S,\MM)$ denote a twice-punctured disk with $n-2$ marked points on the boundary, labeled counterclockwise along the boundary as $\{1,2,\dots, n-2\}$, and let $\punctureP$ and $\punctureQ$ denote the two punctures in the disk. 
The indecomposable objects of the cluster category (or the module category) of affine type $D$ can be described by generalized arcs in $(S,\MM)$: 
A {\em generalized arc} is an isotopy class of curves satisfying only conditions (ii) and (iii) of Definition~\ref{def:arc}: a generalized arc is allowed to cross itself. 
Indecomposable modules correspond to generalized arcs, 
see e.g.~\cite{BZ11} in the unpunctured case and~\cite{BQ15, AP21} for punctured surfaces. 

\begin{example}
    We distinguish four types of (generalized) arcs in a twice-punctured disk $(S,\MM)$:
    \begin{enumerate}[(i)]
        \item 
        arcs with both endpoints on the boundary and which are homotopic to a concatenation of boundary segments as in the first picture of Figure~\ref{fig:generalized-arcs},
        \item 
        arcs with both endpoints on the boundary but which are not homotopic to a concatenation of boundary segments (second picture),
        \item 
        arcs with one end on the boundary and one end at a puncture (third picture),
        \item 
        arcs with both endpoints at punctures (fourth picture). 
    \end{enumerate}
The arcs of type (i) are called {\em peripheral}. 
\end{example} 

\begin{figure}[ht]
\centering
\includegraphics[scale=.8]{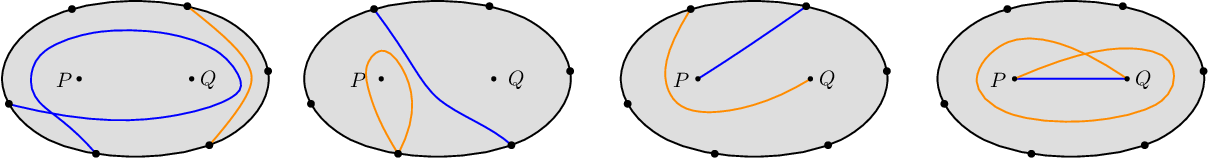}
\caption{Four different types of (generalized) arcs in a twice-punctured disk.}
\label{fig:generalized-arcs}
\end{figure}

The description of the Auslander--Reiten translate in terms of arcs depends on the endpoints of the arcs. 

We will concentrate on the effect of the Auslander--Reiten translate on (generalized) arcs of type (i) and (iv) since these arcs give rise to indecomposable objects in the three non-homogeneous tubes as we will see below (Corollary~\ref{cor:periodic-arcs}). 

We first introduce notation for peripheral arcs: 
Such an arc $\gamma$ starts at a vertex $v_s\in\{1,\dots, n-2\}$, follows the boundary counterclockwise, homotopic to a concatenation of at least $2$ boundary segments and then ends at another vertex of the boundary, $v_t\in\{1,\dots, n-2\}$. The notation $\gamma_{v_s,v_t}^l$ is for the generalized arc which indicates the starting and ending points and the number $l$ of full turns around the boundary the arc does: 
It may go around the two punctures more than once and cross itself. 
Important examples of peripheral arcs are: the arc $\gamma_{v,v+2}^0$ (with $v\le n-4$) which connects $v$ with $v+2$ and forms a triangle with the two boundary segments $(v,v+1)$ and $(v+1,v+2)$ adjacent to $v+1$. 
Similarly, $\gamma_{n-3,1}^0$ forms a triangle with the two boundary segments incident with $n-2$, and $\gamma_{n-2,2}^0$ forms a triangle with the two boundary segments incident with $1$. 
For every $v\in \{1,\dots, n-2\}$, the arc  $\gamma_{v,v}^1$ is a loop starting and ending at $v$, going around the two punctures once

The following two lemmas describe the effect of $\tau$ on (generalized) arcs in the surface:

\begin{lemma}
[{\cite[Proposition 1.3]{BZ11}}]\label{lm:tau-bigtube}

Let $\gamma$ be a generalized arc in $(S,\MM)$ with both endpoints on the boundary. Let $M=M(\gamma)$ be the corresponding indecomposable object in the cluster category. Then the arc of the Auslander--Reiten translate $\tau(M(\gamma))$ is obtained by moving both endpoints of $\gamma$ clockwise to the next marked points on the boundary. 

In particular, if 
$\gamma=\gamma_{v_s,v_t}^l$ is a peripheral generalized arc,  the arc of the Auslander--Reiten translate $\tau(M(\gamma))$ is 
 $\gamma_{v_s-1,v_t-1}^l$ (decreasing the labels of endpoints by $1$ modulo $n-2$). 
\end{lemma}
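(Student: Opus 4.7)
My plan is to reduce the geometric statement to the well-known combinatorial recipe for the Auslander--Reiten translate of string modules, and then translate the recipe back to the surface. Fix a tagged triangulation $T$ of $(S,\MM)$ whose quiver is $\Quiver_T$, and isotope $\gamma$ so that it crosses the arcs of $T$ minimally. Because both endpoints of $\gamma$ lie on the boundary (not at a puncture) and $\gamma$ itself is a generalized arc, the sequence of crossings of $\gamma$ with $T$ determines a string $w(\gamma)$ in $\Quiver_T$, and the indecomposable $M(\gamma)$ is isomorphic to the string module $M(w(\gamma))$; this dictionary is the one used in \cite{BQ15, AP21}.

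For any string module, $\tau M(w)$ is computed by applying a hook-or-cohook operation at each end of $w$, the choice at each end being dictated by whether $w$ can be extended there. Geometrically, at a boundary endpoint of $\gamma$ this operation is local: it replaces the terminal portion of $w$ by the string obtained from the triangles incident with the next marked point going clockwise. A direct case analysis (non-self-folded triangle at the endpoint versus boundary-facing radius of a self-folded triangle) shows that in every case the resulting string corresponds to the arc whose endpoint has been moved clockwise to the next marked point on the boundary. Since the two endpoints of $\gamma$ lie on the boundary, their local neighborhoods are unaffected by the presence of the two punctures, so the computation is essentially the unpunctured one of \cite[Proposition 1.3]{BZ11}, applied independently at each end.

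Once the general statement is established, the peripheral case is immediate: for $\gamma = \gamma_{v_s, v_t}^l$, moving each endpoint clockwise by one marked point shifts $v_s \mapsto v_s - 1$ and $v_t \mapsto v_t - 1$ (modulo $n-2$); because both endpoints are translated together along the same boundary component, the winding number $l$ around the two punctures is unchanged. So the resulting arc is $\gamma_{v_s - 1, v_t - 1}^l$.

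The main obstacle is verifying the arc-to-string dictionary near self-folded triangles and checking that the hook/cohook operation really agrees with a clockwise boundary rotation in every local configuration; to avoid dealing with all configurations, I would choose $T$ so that each endpoint of $\gamma$ lies in a region free of self-folded triangles, carry out the verification there, and then appeal to the well-definedness of $M(\gamma)$ (independence of the chosen triangulation) to conclude the statement for arbitrary $T$.
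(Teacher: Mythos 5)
The paper itself does not prove this lemma: it is quoted from the literature (\cite[Proposition 1.3]{BZ11}, with the punctured-surface analogues developed in \cite{BQ15, QZ17, AP21}), so there is no internal argument to compare yours against; what matters is whether your outline stands on its own, and it has two genuine gaps.

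First, the hook/cohook recipe computes the Auslander--Reiten translate in the \emph{module} category of the (skew-)gentle Jacobian algebra of the chosen triangulation $T$, whereas the statement is about $\tau$ in the \emph{cluster} category. These agree only away from the exceptional objects: if $M(\gamma)$ is a projective module its module-theoretic translate vanishes, and if $\gamma$ is an arc of $T$ then $M(\gamma)$ is a shifted projective and is not a module at all; in the cluster category one has $\tau P=I[-1]$, as recalled in Section 3 of the paper. The lemma is asserted for \emph{every} generalized arc with both endpoints on the boundary, so these cases must be addressed. Your appeal to triangulation-independence could in principle repair this, but then you need to exhibit, for each $\gamma$, a triangulation for which $M(\gamma)$ is neither projective nor shifted projective, and to justify that $\tau$ in the cluster category restricts to the module-theoretic $\tau$ on the remaining objects. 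Second, the heart of the matter --- that performing a hook or cohook at an end of the string is the same as moving the corresponding endpoint of $\gamma$ one marked point clockwise --- is exactly the content of the lemma; in your write-up it is only asserted (``a direct case analysis shows'') and is then backed by invoking \cite[Proposition 1.3]{BZ11} itself. As a self-contained proof this is circular, and as a citation-based argument it collapses to what the paper already does, namely quoting that proposition. (The concluding paragraph on the peripheral case, $v_s\mapsto v_s-1$, $v_t\mapsto v_t-1$ with the winding number $l$ unchanged, is fine once the general statement is granted, and your device of isotoping so that the ends avoid self-folded triangles is reasonable, since hooks and cohooks only modify the ends of the string; but the arc-to-string dictionary in the interior still requires the punctured-surface (skew-gentle) machinery of \cite{BQ15, AP21}, not just the unpunctured one.)
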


\begin{lemma}[{\cite[Section 3.2]{BQ15}}]\label{lm:tau-tags} 
\begin{enumerate}
    \item Let $\gamma$ be an arc with endpoints at the two punctures. Let $M(\gamma)$ be the corresponding indecomposable object. Then the arc of $\tau(M(\gamma))$  is obtained by changing tags at both ends. 
    \item Let $\delta$ be an arc with one endpoint $i$ on the boundary and one endpoint at a puncture. Let $M(\delta)$ be the corresponding indecomposable object. Then the 
arc of $\tau(M(\delta))$
is obtained by changing the tag at the puncture and by moving the endpoint on the boundary to $i-1$. 
\end{enumerate}
\end{lemma}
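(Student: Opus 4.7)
The plan is to use the geometric description of $\tau$ in the surface model for cluster categories of punctured surfaces, as developed in \cite{BQ15,QZ17,AP21}. The unifying principle is that $\tau$ corresponds to a universal ``rotation by one step'' at each endpoint of the arc, where for a boundary endpoint this step moves to the next marked point clockwise (already recorded in Lemma~\ref{lm:tau-bigtube}) and for a puncture endpoint this step swaps the tag (plain $\leftrightarrow$ notched). With this principle in hand, both parts of the lemma reduce to the appropriate case of a uniform rotation rule applied to the arcs of type (iii) and (iv) in Figure~\ref{fig:generalized-arcs}.

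Concretely, I would model each puncture as a pair of ``virtual marked points,'' one for each tag, cyclically ordered so that rotation by one step swaps them. This allows the rule ``rotate each endpoint one step clockwise'' to be applied uniformly to all four types of arcs. For part (1), $\gamma$ has both ends at punctures, and rotating at each end independently swaps each of the two tags. For part (2), the boundary endpoint rotates from $i$ to $i-1$ (since boundary marked points are labeled counterclockwise), while the puncture endpoint swaps its tag.

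The main obstacle is justifying the ``puncture rotation $=$ tag change'' identification, i.e.\ showing that the proposed geometric rule actually matches the AR-translate on indecomposables whose arcs have puncture endpoints. The cleanest route is to verify the rule on a spanning family: the quasi-simples of the two rank-$2$ tubes $\mathcal T_2,\mathcal T_3$, and the quasi-simples of the rank-$(n-2)$ tube $\mathcal T_1$ that have a puncture endpoint. The $\tau$-periodicity of $\mathcal T_2$ and $\mathcal T_3$ is exactly $2$, which is consistent with the fact that swapping a tag twice returns to the original tag; this both forces and is compatible with the proposed rule. Once $\tau$ is pinned down on quasi-simples, compatibility with the mesh relations (equivalently, with the AR-sequences for modules) propagates the description to all indecomposable objects whose arcs have at least one puncture endpoint. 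The remaining details are as in \cite[Section 3.2]{BQ15}.
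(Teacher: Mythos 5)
The paper does not prove this lemma at all: it is imported verbatim from \cite[Section 3.2]{BQ15}, so there is no internal argument to compare yours against. Judged on its own terms, your sketch has the right guiding picture (a puncture behaves like two ``virtual marked points'' exchanged by the one-step rotation that realizes $\tau$), but the verification plan you give does not actually close the gap you yourself identify as the main obstacle.

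Two concrete problems. First, for part (2): arcs with one endpoint on the boundary and one at a puncture are never $\tau$-periodic (this is exactly Corollary~\ref{cor:periodic-arcs} of the paper), so they lie in the transjective component, not in any tube. In particular your proposed ``spanning family'' --- quasi-simples of $\mathcal T_1$ with a puncture endpoint --- is empty, and no quasi-simple of $\mathcal T_2$ or $\mathcal T_3$ is of type (iii) either. So nothing in your verification touches part (2). The standard fix is to anchor the computation at the indecomposable projectives (or injectives), whose arcs are read off directly from the triangulation, compute $\tau^{-1}$ of a projective via an explicit AR-sequence, and then propagate along the transjective component; but that is a different argument from the one you describe. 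Second, for part (1): the observation that $\mathcal T_2,\mathcal T_3$ have $\tau$-period $2$ and that double tag-swap is the identity only shows the rule is \emph{consistent} with $\tau^2=\mathrm{id}$; it does not distinguish the claimed involution (swap tags at both ends, so $\gamma\leftrightarrow\gamma^{(\punctureP\punctureQ)}$ and $\gamma^{(\punctureP)}\leftrightarrow\gamma^{(\punctureQ)}$) from the other fixed-point-free involution on the four arcs (e.g.\ $\gamma\leftrightarrow\gamma^{(\punctureP)}$), which would correspond to a different partition of the four quasi-simples into the two rank-$2$ tubes. Pinning down which pairing is correct requires an actual computation --- an AR-sequence, an Ext-group, or the compatibility criterion for tagged arcs --- and your sketch defers exactly this to ``the remaining details are as in \cite{BQ15},'' which is the whole content of the statement. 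As a citation-plus-heuristic this is fine; as a proof it is not yet one.
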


\begin{corollary}\label{cor:periodic-arcs}
Let $M=M(\gamma)$ be an indecomposable object. If $M$ is $\tau$-periodic then $\gamma$ cannot be of type (ii) or (iii). 
\end{corollary}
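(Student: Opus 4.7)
The plan is to derive the corollary directly from the explicit arc-theoretic descriptions of $\tau$ in Lemmas~\ref{lm:tau-bigtube} and~\ref{lm:tau-tags}, by exhibiting for each of the two arc types an invariant that strictly increases under $\tau$ and therefore prevents any iterate $\tau^k M(\gamma)$ from being isomorphic to $M(\gamma)$.

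First I would treat type (ii). Here $\gamma$ has both endpoints on the boundary but is not peripheral, so in any minimal representative it meets the interior of the twice-punctured disk in an essential way. By Lemma~\ref{lm:tau-bigtube}, the $k$-th iterate $\tau^k M(\gamma)$ corresponds to the arc obtained from $\gamma$ by moving both endpoints clockwise by $k$ steps. For the endpoints alone to return to their original marked points, $k$ must be a multiple of $n-2$; but on isotopy classes the full rotation is a Dehn twist along a curve parallel to the boundary of the twice-punctured disk, which is isotopic to the product of Dehn twists around the two punctures. The plan is to fix a reference arc $\delta$ joining the two punctures and use the geometric intersection number $|\gamma \cap \delta|_{\min}$ as invariant: since $\gamma$ is non-peripheral, applying the boundary Dehn twist adds a full winding around the punctures, so this intersection number strictly grows with $k$, forcing $\tau^k M(\gamma) \not\cong M(\gamma)$ for every $k \geq 1$.

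Next I would treat type (iii). Here $\gamma$ has one endpoint at a puncture, say $\punctureP$, and one endpoint at a boundary vertex $i$. By Lemma~\ref{lm:tau-tags}(2), $\tau^k M(\gamma)$ corresponds to the arc whose tag at $\punctureP$ has been flipped $k$ times and whose boundary endpoint has been rotated clockwise by $k$ steps. Periodicity would require $k$ to be both even (for the tag) and a multiple of $n-2$ (for the boundary endpoint); but after any such $k$, the arc itself has been dragged through a nontrivial rotation of the boundary. I would use as invariant the winding number of $\gamma$ around the other puncture $\punctureQ$, which increases by one for each full clockwise rotation of the boundary endpoint, since the arc cannot be freely isotoped past $\punctureQ$. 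Hence the iterated arcs represent pairwise distinct isotopy classes and $M(\gamma)$ is again not $\tau$-periodic.

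The main obstacle in both cases is justifying the monotonicity of the chosen invariant, i.e.\ that the boundary Dehn twist acts faithfully on the relevant isotopy classes of arcs. The cleanest way I would go about this is to pass to a universal cover of the twice-punctured disk (or equivalently, lift $\gamma$ to the affine type $A$ annulus cover used later in the paper for the rank $(n-2)$ tube): in the cover the endpoints of successive lifts of $\tau^k \gamma$ march off to infinity, which immediately rules out any isotopy between $\gamma$ and $\tau^k \gamma$. The two subcases then follow, and the corollary is established.
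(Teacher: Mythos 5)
Your argument is essentially the paper's own proof, which simply observes that iterating $\tau$ on arcs of type (ii) or (iii) introduces higher and higher winding around the punctures, so no iterate can return to the original isotopy class; you have merely made the winding quantitative via the boundary Dehn twist, intersection numbers with a reference arc, and a lift to a cover. This elaboration is correct and consistent with Lemmas~\ref{lm:tau-bigtube} and~\ref{lm:tau-tags}, so nothing further is needed.
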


\begin{proof}
    Applying $\tau$ repeatedly to (generalized) arcs of type (ii) or (iii) introduces higher and higher winding around the punctures and hence such modules cannot be $\tau$-periodic.
\end{proof}

Recall that we write $\mathcal T_1$ for the tube of rank $n-2$ and $\mathcal T_2$, $\mathcal T_3$ for the two rank 2 tubes of the cluster category of affine type $D$. Let Ind$(\mathcal T_i)$ denote the set of indecomposable objects of $\mathcal T_i$ (up to isomorphism). 
For the purpose of friezes we only need to consider compositions of irreducible maps between such objects. 

\begin{proposition}
\label{prop:indec-tubes}
Let $(S,\MM)$ be the twice-punctured disk with $n-2$ marked points on the boundary. Then there is a bijection: 
\[
\mbox{Ind}(\mathcal{T}_1) 
\longleftrightarrow 
\{\mbox{peripheral generalized arcs } \gamma_{v_s,v_t}^l \},
\]
i.e. with the set $\gamma_{v_s,v_t}^l$ with 
$0\le v_s,v_t\le n-2,\ l\ge 0$,  and where $v_t\notin \{v_s, v_s+1\}$ if $l=0$. Furthermore: 
\begin{enumerate}
\item 
The indecomposable quasi-simple objects of the tube $\mathcal T_1$ correspond to the arcs 
$\gamma_{v,v+2}^0$ (reducing modulo $n-2$), for $v=1,\dots, n-2$; 
\item 
The irreducible maps in $\mathcal T_1$ correspond to the lengthening and shortening of generalized arcs (reducing endpoints modulo $n-2$): 

\begin{eqnarray}
\mbox{(right) lengthening} 
& 
\gamma_{v_s,v_t}^l 
& \to \left\{ \begin{array}{ll}
\gamma_{v_s,v_t+1}^l &  \mbox{ if $v_s \ne v_t+1$} 
\\ 
& \\
\gamma_{v_s,v_t+1}^{l+1} & \mbox{ else.}
\end{array}
\right. \label{eq:lengthen}
\\ 
\mbox{(left) shortening} 
& \gamma_{v_s,v_t}^l & \to 
\left\{ 
\begin{array}{ll}
\gamma_{v_{s+1},v_t}^l &  \mbox{ if $l>0$ and $v_t\ne v_s$} \\ 
& \\
\gamma_{v_{s+1},v_s}^{l-1} & \mbox{ $l>0$ and $v_t=v_s$} \\ 
 & \\
\gamma_{v_{s+1},v_t}^0 & \mbox{ if $l=0$ and $v_t\ne v_s+2$.}\\
\end{array}
\right.\label{eq:shorten}
\end{eqnarray}
\end{enumerate}
\end{proposition}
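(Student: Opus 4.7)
The plan is to derive the proposition from the general bijection between indecomposable objects of the cluster category and generalized (tagged) arcs in $(S,\MM)$ (see \cite{BQ15, AP21, BM2012}), together with the action of the Auslander--Reiten translate described in Lemmas \ref{lm:tau-bigtube} and \ref{lm:tau-tags}. Since every indecomposable in a non-homogeneous tube is $\tau$-periodic, Corollary \ref{cor:periodic-arcs} already rules out arcs of types (ii) and (iii); Lemma \ref{lm:tau-tags}(1) shows that arcs of type (iv) have $\tau$-period $2$, so they account for the rank-$2$ tubes $\mathcal{T}_2,\mathcal{T}_3$, which are treated separately in the paper. Lemma \ref{lm:tau-bigtube} shows that a peripheral arc $\gamma_{v_s,v_t}^l$ has $\tau$-period exactly $n-2$, matching the rank of $\mathcal{T}_1$. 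This restricts the general bijection to peripheral arcs; a short combinatorial check confirms that the normal form $0\le v_s,v_t\le n-2$, $l\ge 0$, with $v_t\notin\{v_s,v_s+1\}$ when $l=0$, gives exactly one representative per isotopy class.

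For (1), I would identify the mouth with the $n-2$ ``shortest'' peripheral arcs $\gamma_{v,v+2}^0$, for $v=1,\dots,n-2$. These are precisely the arcs for which the left-shortening rule in \eqref{eq:shorten} is undefined (the $l=0$ case explicitly requires $v_t\ne v_s+2$), so they have no irreducible predecessor under shortening, matching the defining property of quasi-simples; by Lemma \ref{lm:tau-bigtube} they form a single $\tau$-orbit of size $n-2$, which is the mouth of the tube.

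For (2), I would use that $\mathcal{T}_1$ is standard (Section \ref{sec:three-tubes}), so it suffices to realise its translation quiver $\mathbb{Z}A_\infty/(n-2)\mathbb{Z}$ combinatorially. Starting from each quasi-simple $\gamma_{v,v+2}^0$ and iterating the right-lengthening operation \eqref{eq:lengthen}, one obtains, after $k-1$ steps, an arc of ``quasi-length'' $k$ with starting point $v_s=v$; a direct count yields $n-2$ such arcs at each quasi-length $k$, matching the number of indecomposables of quasi-length $k$ in $\mathcal{T}_1$. Lengthening on the counterclockwise end and shortening on the clockwise end then provide the two families of irreducible maps emanating from each indecomposable, and the mesh relations reduce to a case-by-case check that lengthening followed by shortening agrees with shortening followed by lengthening.

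The main obstacle I anticipate is the winding-number bookkeeping in the exceptional cases of \eqref{eq:lengthen} (when $v_s=v_t+1$) and \eqref{eq:shorten} (when $v_t=v_s$), where $l$ jumps by $\pm 1$; one has to verify that these jumps correspond exactly to the ``wrap-around'' points of the translation quiver, i.e.\ when the corresponding slice completes a full turn around the two punctures. Once this matching is verified at a single slice, the rest of the AR-structure of $\mathcal{T}_1$ follows by $\tau$-equivariance from Lemma \ref{lm:tau-bigtube}.
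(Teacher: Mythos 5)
Your proposal is correct and follows essentially the same route as the paper: the paper's proof also realises the set of peripheral generalized arcs as a stable translation quiver (with arrows given by the lengthening/shortening rules \eqref{eq:lengthen}--\eqref{eq:shorten} and translation $\gamma_{v_s,v_t}^l\mapsto\gamma_{v_s-1,v_t-1}^l$ as in Lemma~\ref{lm:tau-bigtube}), checks it is a tube of rank $n-2$, and identifies it with the AR-quiver of the standard tube $\mathcal{T}_1$, deferring the detailed verification (including the winding-number bookkeeping you flag) to the references. Your preliminary reduction via Corollary~\ref{cor:periodic-arcs} and Lemma~\ref{lm:tau-tags} to exclude the other arc types is exactly the surrounding discussion the paper relies on.
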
 
\begin{proof}
We show that the set of generalized arcs in the statement defines a translation quiver which is a tube of rank $n-2$, hence isomorphic to the Auslander--Reiten quiver of $\mathcal{T}_1$.

We define a quiver $\Gamma_{n-2}$ whose vertices are the generalized arcs 
$\gamma_{u_s,u_t}^l$, 
as illustrated here (where for $l=0$, $v_t\notin\{v_s,v_s+1,v_s+2\}$):  
\[
\xymatrix@R=8pt@C=8pt{
 & \gamma_{v_s,v_t+1}^{\lambda(l)}\ar[dr] & & \\ 
\gamma_{v_s,v_t}^l\ar[ur] \ar[dr]& & \gamma_{v_s+1,v_t+1}^l\ar@{.>}[ll]_{\tau} &    \\
 & \gamma_{v_s+1,v_t}^{\sigma(l)}\ar[ur] & 
}
\]
with $\lambda(l)\in\{l,l+1\}$, see \eqref{eq:lengthen} and $\sigma(l)\in \{l-1,l\}$, see \eqref{eq:shorten}.
We always reduce endpoints modulo $n-2$. 
The arrows of $\Gamma_{n-2}$ are defined as 
in \eqref{eq:lengthen} and \eqref{eq:shorten} above. 
Furthermore, we have a translation map on the vertices (as in Lemma~\ref{lm:tau-bigtube}) which we also denote by $\tau$: 
\[
\tau:\gamma_{v_s,v_t}^l \mapsto \gamma_{v_s-1,v_t-1}^l.
\]

\vskip .3cm

\noindent
So locally, we have a configuration as above (unless $l=0$ and $v_t=v_s+2$) or as illustrated here (when $l=0$ and $v_t=v_s+2$): 
\[
\xymatrix@R=8pt@C=8pt{
 & \gamma_{v,v+3}^0\ar[rd]  \\
\gamma_{v,v+2}^0\ar[ur] & &
\gamma_{v+1,v+3}^0
\ar@{.>}[ll]_{\tau}
}
\]
(if $n-2=2$, the arc in the middle is $\gamma_{v,v+1}^1$). 

Then one can check that $(\Gamma_{n-2},\tau)$ is a stable translation quiver in the sense of Riedtmann, \cite{R80}. Furthermore, it is isomorphic to the Auslander--Reiten quiver of $\mathcal{T}_1$. 
For details, we refer to~\cite[Sections 2.3, 2.4]{BM2012}. 
This proves parts (1) and (2).  
\end{proof}

Proposition~\ref{prop:indec-tubes} is illustrated in Figure~\ref{fig:AR-quiver}.

\begin{figure}[htbp!]
\includegraphics[width=115mm]{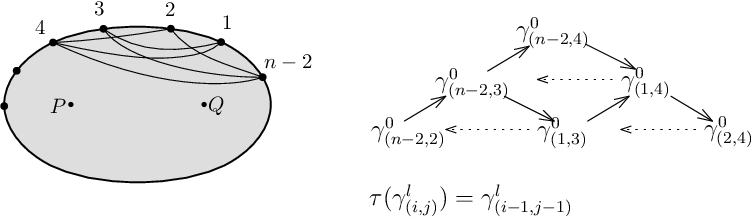}
\caption{Part of the AR-quiver corresponding of the tube of rank $n-2$.}\label{fig:AR-quiver}
\end{figure}

\begin{remark}\label{rem:n=4}
In case $n=4$, there are three tubes of rank two as $\mathcal T_1$ is also of rank two, with the quasi-simples in tube $\mathcal T_1$ given by the arcs $\gamma_{1,3}=\gamma_{1,3}^0$ and $\gamma_{2,4}=\gamma_{2,4}^0$ (Figure~\ref{fig:arcs-at-mouths}, right picture). 
\end{remark}

To describe the arcs for the quasi-simples in the rank two tubes $\mathcal{T}_2$ and $\mathcal{T}_3$, we can use the fact that the quasi-simples are the only rigid modules in these tubes. And rigid modules correspond to arcs in the sense of Definition~\ref{def:arc}, i.e. without self-crossings. 
We show in Lemma~\ref{lm:arcs at the mouth of small tubes} that the corresponding arcs are the four possible tagged arcs starting at one puncture and ending at the other puncture. We denote them by 
$\gamma$, 
$\gamma^{(\punctureP)}$, $\gamma^{(\punctureQ)}$
and 
$\gamma^{(\punctureP\punctureQ)}$ (see Figure~\ref{fig:arcs-at-mouths}, left two pictures). 
\begin{center}
\begin{figure}[htbp!]
\includegraphics[scale=.89]{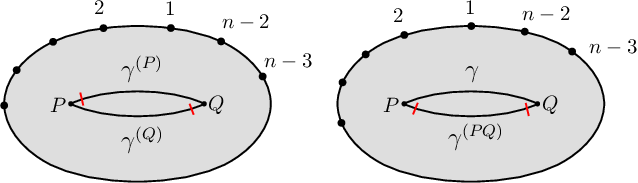}
\hskip 1cm\includegraphics[scale=.89]{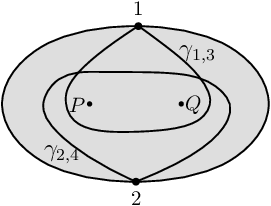}
\caption{Arcs corresponding to quasi-simples in rank $2$ tubes: the tubes $\mathcal{T}_2$ and $\mathcal{T}_3$ for arbitrary $n$ (left and center), the tube $\mathcal{T}_1$ for $n=4$ (right).}
\label{fig:arcs-at-mouths}
\end{figure}
\end{center}

\begin{lemma}\label{lm:arcs at the mouth of small tubes} 
The arcs corresponding to the quasi-simple modules of the tubes $\mathcal T_2$ and $\mathcal T_3$ are $\{\gamma^{(\punctureP)},\gamma^{(\punctureQ)},\gamma,\gamma^{(\punctureP\punctureQ)}\}$. 
Furthermore, we have 
\[
\begin{array}{lcl}
\tau(\gamma^{(\punctureP)})= \gamma^{(\punctureQ)} & \mbox{and}& \tau(\gamma^{(\punctureQ)})= \gamma^{(\punctureP)} \\ 
\tau(\gamma)= \gamma^{(\punctureP\punctureQ)} & \mbox{and}& \tau(\gamma^{(\punctureQ\punctureP)})= \gamma 
\end{array}
\]
\end{lemma}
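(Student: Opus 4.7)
The plan is first to pin down which (tagged) arcs can possibly correspond to quasi-simples of $\mathcal T_2$ and $\mathcal T_3$, and then to verify the $\tau$-action by direct appeal to Lemma~\ref{lm:tau-tags}.

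Since $\mathcal T_2$ and $\mathcal T_3$ are non-homogeneous tubes, all of their indecomposable objects are $\tau$-periodic; in particular, so are the quasi-simples at their mouths. By Corollary~\ref{cor:periodic-arcs}, the corresponding (generalized) arcs can only be of type (i) (peripheral) or of type (iv) (both endpoints at punctures). Moreover, quasi-simple objects at the mouth of a non-homogeneous tube are rigid, so the corresponding arcs cannot have self-crossings --- they are honest arcs in the sense of Definition~\ref{def:arc}, not merely generalized arcs. The peripheral arcs without self-crossings are exactly the $\gamma_{v,v+2}^{0}$ for $v=1,\dots,n-2$, and by Proposition~\ref{prop:indec-tubes}(1) these already exhaust the quasi-simples of $\mathcal T_1$. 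Hence the quasi-simples of $\mathcal T_2$ and $\mathcal T_3$ must correspond to (tagged) arcs of type (iv), i.e.\ arcs with both endpoints at the two punctures.

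Next I would count the tagged arcs of type (iv) in $(S,\MM)$. Up to isotopy, there is exactly one underlying simple untagged arc between $\punctureP$ and $\punctureQ$ in a twice-punctured disk, since such an arc cuts the disk into a topological annulus and any two such arcs are ambient isotopic. Each endpoint can be tagged plain or notched, giving exactly the four tagged arcs $\gamma,\gamma^{(\punctureP)},\gamma^{(\punctureQ)},\gamma^{(\punctureP\punctureQ)}$. Since $\mathcal T_2$ and $\mathcal T_3$ each have rank $2$, they have a total of four quasi-simples; by the surface model for indecomposables (see \cite{BQ15, AP21}) the correspondence is a bijection, so these four tagged arcs give precisely the quasi-simples of $\mathcal T_2\cup\mathcal T_3$.

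To establish the $\tau$-action, I would apply Lemma~\ref{lm:tau-tags}(1) to each of the four arcs: the translate simply toggles the tag at each endpoint. Starting from $\gamma$ (plain at both ends), this produces $\gamma^{(\punctureP\punctureQ)}$ (notched at both ends), and vice versa; similarly $\gamma^{(\punctureP)}$ and $\gamma^{(\punctureQ)}$ are exchanged. This gives two disjoint $\tau$-orbits of length $2$, matching the two rank-$2$ tubes: one tube has $\{\gamma,\gamma^{(\punctureP\punctureQ)}\}$ as its mouth, the other has $\{\gamma^{(\punctureP)},\gamma^{(\punctureQ)}\}$.

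The main obstacle I anticipate is the justification that there is a unique isotopy class of simple untagged arc from $\punctureP$ to $\punctureQ$, so that tagging genuinely produces only four arcs of type (iv); this is essentially a topological observation about the twice-punctured disk, but it is worth stating carefully since otherwise one might worry about arcs that wind around one of the punctures before terminating at the other (such arcs necessarily acquire self-crossings or violate isotopy, hence are ruled out by rigidity).
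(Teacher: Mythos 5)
Your argument follows the same route as the paper's proof: restrict to $\tau$-periodic arcs via Corollary~\ref{cor:periodic-arcs}, rule out the peripheral candidates as belonging to $\mathcal T_1$, identify the four tagged arcs between the punctures, and read off the $\tau$-action from Lemma~\ref{lm:tau-tags}. One intermediate assertion is false as stated: the peripheral arcs without self-crossings are \emph{not} only the $\gamma_{v,v+2}^{0}$ --- any $\gamma_{v_s,v_t}^{0}$ (for instance $\gamma_{v,v+3}^{0}$) is embedded. This does not damage the argument, because what you actually need is weaker and follows directly from the bijection of Proposition~\ref{prop:indec-tubes}, which you already cite: \emph{every} peripheral generalized arc corresponds to an object of $\mathcal T_1$, so no quasi-simple of $\mathcal T_2$ or $\mathcal T_3$ can be of type (i). Phrased this way, your argument also silently covers the $n=4$ case, where the two embedded loops $\gamma_{1,3}^{0}$ and $\gamma_{2,4}^{0}$ around both punctures are $2$-periodic yet lie in $\mathcal T_1$; the paper's proof treats this case explicitly, and it is worth a sentence in yours as well. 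The uniqueness of the embedded untagged arc from $\punctureP$ to $\punctureQ$, which you rightly flag as the remaining point to justify, is the standard topological fact that the paper uses implicitly, and your rigidity observation correctly excludes winding representatives.
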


\begin{proof} 
Arcs corresponding to modules in a rank 2 tube have to be $\tau$-periodic of period $2$. 
By Corollary~\ref{cor:periodic-arcs}, the only candidates for $2$-periodic arcs are $\gamma^{(\punctureP)}$, $\gamma^{(\punctureQ)}$, $\gamma$, $\gamma^{(\punctureP\punctureQ)}$ from Figure~\ref{fig:arcs-at-mouths} or, in case $n=4$, the two arcs, each starting and ending at the same boundary point and forming a loop around the two punctures, see Figure~\ref{fig:arcs-at-mouths}. The modules of the latter are  in ${\mathcal T}_1$.  These two arcs correspond to $\gamma_{1,3}^0$ and $\gamma_{2,4}^0$ in the notation of Proposition~\ref{prop:indec-tubes} and $\tau$ interchanges them. See also Remark~\ref{rem:n=4}.

By Lemma~\ref{lm:tau-tags}, $\tau$ interchanges $\gamma$ and  $\gamma^{(\punctureP\punctureQ)}$ and it interchanges  $\gamma^{(\punctureP)}$ and $\gamma^{(\punctureQ)}$.
\end{proof}

\subsection{Friezes of 
affine 
type $D$}\label{sec:affine-friezes}

This section is devoted to showing that the non-homogeneous tubes give rise to a triple of infinite friezes. 
Recall from~\cite{CC06, P08} that the CC-map associates to any indecomposable module $M$ the following rational function: 
\begin{equation}~\label{eq:CC}
\centering
    \cc(M)=\frac{1}{x_1^{d_1}x_2^{d_2}\cdots x_n^{d_n}}\sum_{e\in \mathbb{N}^{\Quiver_0}} \chi(\gr_e(M))\prod_{i\in \Quiver_0}x_i^{\sum_{j\to i}e_j+\sum_{i\to j}(d_j-e_j)}
\end{equation}
where

\begin{enumerate}[i.]
    \item 
    $(d_1,d_2,\cdots,d_n)=\underline{\rm dim}\,M$,
    \item 
    $\gr_e (M)$ is the quiver Grassmannian and $\chi$ the Euler--Poincar\'{e} characteristic.
\end{enumerate} 
In that sense we think of friezes as functions on the AR-quiver. 
For details, we refer to the last section of 
the lecture notes~\cite{Mor17}, survey \cite{Mor15} and \cite{ARS10, AD11}.

For any indecomposable object $M$, the value of the  specialized (generalized) CC-map is exactly the number of submodules of $M$ (taking multiplicities into account), arguing as in~\cite{BCJKT}. 
So, to get frieze patterns in affine type $D$, we will compute the number of submodules of the modules in the non-homogeneous tubes.

In these tubes, the numbers of submodules of quasi-simple modules (i.e. modules in row 1) give us the quiddity sequence of the associated frieze. All other entries are determined by the diamond rule. For $M$ a module, let us write $\rho(M)$ for the number of submodules of $M$.

In the remaining part of this section, we compute the numbers $\rho(M)$ for quasi-simple modules in the tube $\mathcal{T}_1$: let $\quid=(a_1,\dots, a_{n-2})$ be the associated quiddity sequence. 
By the above and using 
Proposition~\ref{prop:indec-tubes} (1), the entry $a_i$ is the number of submodules of the module corresponding to the arcs $\gamma_{i-1,i+1}$. We note that the coefficients $\chi(\gr_e(M))$ in~\eqref{eq:CC} are explicitly computed in~\cite{C11} for $M$ corresponding to these type of arcs.
 
\begin{remark}\label{rem:module-arc}
Here we describe the representation 
$M(\gamma)$ corresponding to a peripheral arc $\gamma=\gamma_{i-1,i+1}=\gamma_{i-1,i+1}^0$ (reducing endpoints modulo $n-2$). 
Let $r_i$ be the number of arcs of $T$ ending at $i$ (counting every appearing loop arc as one arc). 

If 
$\gamma$ is an arc of the triangulation then $r_i=0$. In this case, we consider $M(\gamma)$ to be a zero module and $\rho(M(\gamma))=1=r_i+1$. 

So assume now that $\gamma$ is not an arc of the triangulation. 
The quiver associated to $\gamma$ is a subquiver $\Quiver_{\gamma}$ of the quiver $\Quiver_T$ of the triangulation $T$. 
We distinguish three situations (see Figure~\ref{fig:arc-quiver}): 
\begin{enumerate}[(a)]
    \item \label{rem:module-arc:itm:a} there is no loop at $i$, 
    \item \label{rem:module-arc:itm:b} there is one loop at $i$,
    \item \label{rem:module-arc:itm:c} there are two loops at $i$. 
\end{enumerate}
\begin{figure}[htbp]
\centering
\captionsetup{justification=centering}
\includegraphics[scale=.9]{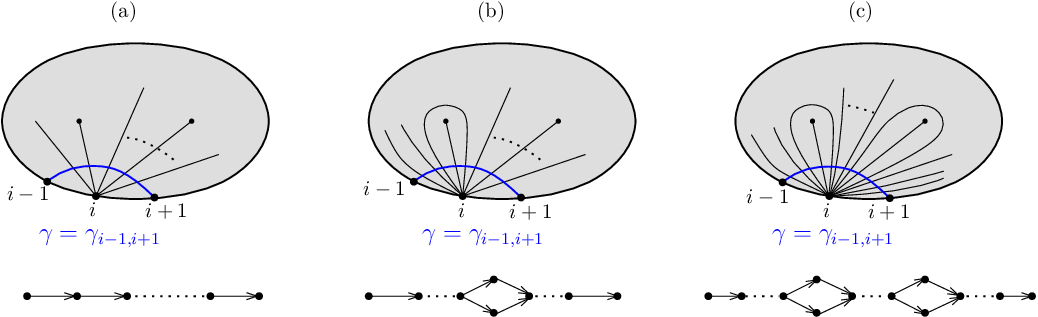}
\caption{Arcs $\gamma$ and associated quivers $\Quiver_\gamma$ for quasi-simple modules in $\mathcal T_1$.}
\label{fig:arc-quiver}
\end{figure}

Note that in all cases $\Quiver_\gamma$ has $r_i$ vertices.
In case \ref{rem:module-arc:itm:a}, $\Quiver_{\gamma}$ is a linear orientation of a quiver of type $A_{r_i}$. 
In cases \ref{rem:module-arc:itm:b} and \ref{rem:module-arc:itm:c} the figure shows one or two rhombi formed by the arrows, with two vertically aligned and two horizontally aligned vertices. These rhombi are not always present. 
In case \ref{rem:module-arc:itm:b}, the quiver may end with the two vertices vertically aligned (i.e. the quiver doesn't contain the vertices to the right of these two vertices), or it may start with the two vertices vertically aligned (i.e. the quiver doesn't contain the vertices to the left of these two vertices). 
Similarly, in case \ref{rem:module-arc:itm:c}, the quiver may end with the two vertically aligned vertices on the right or it may start at the two vertically aligned vertices on the left. Furthermore, in \ref{rem:module-arc:itm:c}, it may happen that the two rhombi share the vertex in the middle. 
See Figure~\ref{fig:regions} for two examples of the quivers $\Quiver_{\gamma}$ for cases \ref{rem:module-arc:itm:a} and \ref{rem:module-arc:itm:b}. 

In each of these three cases, the indecomposable representation $M(\gamma)$ is one-dimensional at every vertex of $\Quiver_{\gamma}$ and has identity maps on the arrows. 
This describes $M(\gamma)$ as a representation of $\Quiver_\gamma$ and also of $\Quiver_T$. 
\end{remark}

It is straightforward to prove the 
following result. 
\begin{lemma}\label{lm:quid-peripheral}
Let $\gamma_{i-1,i+1}=\gamma_{i-1,i+1}^0$ be a peripheral arc (reducing endpoints modulo $n-2$) and let $r_i$ be the number of arcs incident with $i$ as above. Then the number of submodules of the quasi-simple module
$M(\gamma_{i-1,i+1})$ is the following:
\[
\rho(M(\gamma_{i-1,i+1}))  = 
\left\{
\begin{array}{ll} r_i+1 & \mbox{in case (a)}, \\
r_i+2 & \mbox{in case (b)}, \\
r_i+3 & \mbox{in case (c)}. \end{array}
\right.
\]
\end{lemma}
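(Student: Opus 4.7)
The plan is to leverage the explicit structure of $M(\gamma_{i-1,i+1})$ recorded in Remark~\ref{rem:module-arc}: it is the ``thin'' representation of $Q_\gamma$ which is one-dimensional at each vertex with identity maps on every arrow. The first step is to observe that subrepresentations of such a representation correspond bijectively with \emph{arrow-closed} subsets $S$ of the vertex set $(Q_\gamma)_0$, namely subsets satisfying $v\in S$ and $v\to w \in Q_\gamma$ $\Rightarrow$ $w\in S$. Indeed, each component of a subrepresentation $N$ is either $0$ or $k$, and since the arrow maps of $M(\gamma)$ are identities, the condition that they restrict to $N$ is exactly the arrow-closed condition on the support. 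Hence $\rho(M(\gamma_{i-1,i+1}))$ equals the number of arrow-closed subsets of $Q_\gamma$.

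In case (\ref{rem:module-arc:itm:a}), $Q_\gamma$ is a linearly oriented $A_{r_i}$-quiver $v_1\to v_2\to\cdots\to v_{r_i}$, and the arrow-closed subsets are exactly the right tails $\{v_k,v_{k+1},\ldots,v_{r_i}\}$ for $1\le k\le r_i$, together with $\emptyset$; this gives $r_i+1$ subsets. The key technical input for the remaining cases is the count for a single rhombus subquiver $X\to L$, $X\to R$, $L\to Y$, $R\to Y$: a direct enumeration yields exactly six arrow-closed subsets
\[
\emptyset,\ \{Y\},\ \{L,Y\},\ \{R,Y\},\ \{L,R,Y\},\ \{X,L,R,Y\},
\]
which is one more than for a linear path on four vertices. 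The "extra" subset is contributed by the ability to include $L$ without $R$ (or vice versa), because $L$ and $R$ are parallel.

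For case (\ref{rem:module-arc:itm:b}), I would write $Q_\gamma$ as a left tail of length $p$, followed by the rhombus $A,L,R,B$, followed by a right tail of length $r_i-p-4$ (possibly empty at either end), and partition arrow-closed subsets $S$ according to where $S$ "cuts": (i) $S$ is contained in the right tail (contributing $r_i-p-3$ subsets), (ii) $B\in S$ but $A\notin S$ with the four independent choices for $S\cap\{L,R\}$ (contributing $4$), or (iii) $A\in S$ and $S$ extends arbitrarily into the left tail (contributing $p+1$). Summing gives $r_i+2$. For case (\ref{rem:module-arc:itm:c}), the same case-analysis strategy with two rhombi yields $r_i+3$; one splits on where $S$ first "enters" each rhombus from the right. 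The only technical nuisance is the bookkeeping in the degenerate configurations (a rhombus at an endpoint of $Q_\gamma$, or the two rhombi sharing their middle vertex so that the bridge between them disappears), but each variant reduces to the same count via an identical case analysis, and the pattern "each rhombus contributes $+1$" persists. Handling the shared-vertex subcase in (\ref{rem:module-arc:itm:c}) cleanly is the main obstacle, since the rhombi are no longer independent and one must verify by direct enumeration that the two independent "$L$-vs-$R$" choices on the two rhombi still produce exactly the same $r_i+3$ total.
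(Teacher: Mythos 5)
Your proof is correct and follows essentially the same route as the paper's: both identify submodules of the thin representation $M(\gamma_{i-1,i+1})$ with down-closed (arrow-closed) subsets of the equioriented support quiver $\Quiver_\gamma$ and then enumerate these subsets. The only real difference is the final count: the paper enumerates each nonzero down-set by its antichain of generators (the $r_i$ singleton-generated submodules plus the one or two submodules generated by the parallel pairs $\{s,s'\}$ and $\{t,t'\}$), which handles uniformly the degenerate configurations (a rhombus truncated at an end of $\Quiver_\gamma$, or the two rhombi sharing their middle vertex) that your left-to-right sweep must treat case by case.
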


\begin{proof}
Consider Figure~\ref{fig:arc-quiver}. 
Let $\Quiver_{\gamma}$ be the quiver of the peripheral arc $\gamma=\gamma_{i-1,i+1}^{0}$, as defined in Section \ref{sec:quiver}. In all three case, $\Quiver_{\gamma}$ is equioriented. 
Label its vertices from sink to source as follows
\[
\begin{array}{lc}
(a) & 
\xymatrix{
r_i\ar[r] & r_i-1\ar[r] & \dots & \dots\ar[r] & 2\ar[r] & 1
} \\ 
  \\
(b) & \xymatrix@R=.6pt@C=14pt{ & & &   s'\ar[rd] & & \\
r_i-1\ar[r] & \dots\ar[r] &  s+1\ar[ru]\ar[rd]  &  & s-1\ar[r] & \dots\ar[r] & 1 \\ 
& & &  s\ar[ru] &  & & &
} \\
(c) & \xymatrix@R=.6pt@C=14pt{ & & & t'\ar[rd] &&&&  s'\ar[rd] & & \\
r_i-2\ar[r] & \dots \ar[r] & t+1 \ar[ru]\ar[rd] &  &t-1\ar[r]& \dots\ar[r] &s+1\ar[ru]\ar[rd]  &  & s-1\ar[r] & \dots\ar[r] & 1 \\ 
& & & t\ar[ru] &&&& s\ar[ru] &  & & &
}
\end{array}
\]

In case $(a)$, the non-zero submodules correspond exactly to the connected subquivers supported at vertices $1, 2, \dots, s$, for $1 \leq s \leq r_i$, 
so there are $r_i$ non-zero submodules. In total, $\rho(M(\gamma))=r_i+1$. 

In cases (b) and (c), this is similar: the non-zero submodules correspond to 
one of the following: 
\begin{itemize}\item 
The submodule generated by a single vertex; there are $r_i$ such submodules since $\Quiver_\gamma$ has $r_i$ vertices. 
\item 
We also have the submodule generated by the vertices $s$ and $s'$ together.
\item In addition, for case (c), we have the submodule generated by the vertices $t$ and $t'$ together.
\end{itemize}
Taking all these submodules together with the zero module, we get
$\rho(M(\gamma))=r_i+2$ in case (b) and $\rho(M(\gamma))=r_i+3$ in case (c), as claimed.


\end{proof}

\begin{remark}\label{rem:dots} 
Determining the entries $a_i$ in the quiddity sequence amounts to counting regions near boundary vertices. 
In the situation of 
Figure~\ref{fig:arc-quiver}, in a small neighborhood of $i$, we see $r_i+1$ regions in the case (a) (where there are no loops at $i$), $r_i+2$ regions in case (b), and $r_i+3$ regions in case (c). 

So, by Lemma~\ref{lm:quid-peripheral}, in all three situations the number $\rho(M(\gamma_{i-1,i+1}))$ of submodules of $M(\gamma_{i-1,i+1})$ is equal to the number of regions incident with $i$ when restricting the triangulated surface to a small neighborhood of $i$. 
In Figure~\ref{fig:regions}, these neighborhoods are indicated by the blue curves near the vertices on the boundary, and the regions are indicated by the dots. 
\end{remark}

\begin{example}\label{ex:triangul-quidd}
Consider the triangulation in Figure~\ref{fig:regions}. We see that the quiddity sequence of the frieze from the tube $\mathcal{T}_1$ is $\quid=(7,1,4,2,2)$ (see Remark~\ref{rem:dots}). 
The figure also shows two peripheral arcs (middle picture) and the corresponding representations (on the right).

Figure~\ref{fig:AR-frieze} shows part of the AR-quiver of this tube and several rows of the frieze given by this triangulation. 
Note that as is customary, 
the AR-quiver is drawn going up the page and the frieze going down. 
We will see later (Lemma~\ref{lm:quid-small-tubes}) that the quiddity sequences of the friezes of the two rank two tubes are $\quid_2=(3,12)$ and $\quid_3=(3,12)$. 
\end{example}

\begin{figure}[htbp!]
\centering
\includegraphics[scale=.99]{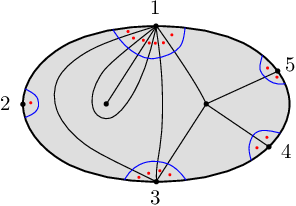}
\hskip 1cm\includegraphics[scale=.99]{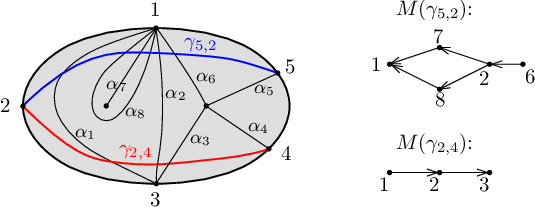}
\caption{Quiddity sequence $\quid=(7,1,4,2,2)$ from the 
boundary of a triangulated surface 
and some quasi-simple modules of the corresponding tube~$\mathcal T_1$.}
\label{fig:regions}
\includegraphics[scale=.64]{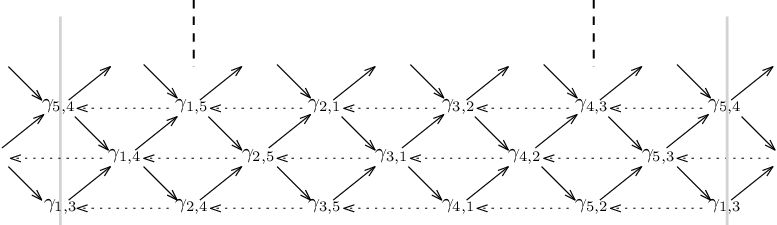}
\hskip.5cm
\includegraphics[scale=.64]{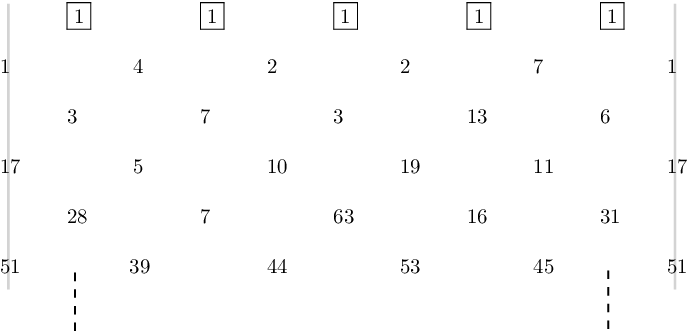}
\caption{Part of the AR-quiver of the tube $\mathcal T_1$ and of the corresponding frieze for the triangulation from Figure~\ref{fig:regions}. The row of $1$s (highlighted with boxes) has no counterpart in the AR-quiver on the left.}
\label{fig:AR-frieze}
\end{figure}

%
\section{Growth coefficient of the tube $\mathcal{T}_1$ of rank ($n-2$)}\label{sec:big-tube}

To compute the growth coefficient for the tube $\mathcal{T}_1$ of rank $n-2$, we  determine the values of 
rows $n-2$ and $n-4$ in the frieze associated to this tube 
(see Definition~\ref{def:growth-coeff}). 

Our strategy is to two-fold. First we translate the setup from affine type $D$ to affine type $A$: 
By Theorem~\ref{thm:infinite-annulus} (2), every infinite frieze can be obtained from a triangulation of an annulus. 
In a second step,  we use a result of~\cite{GMV19} to compute the growth coefficient directly from a closed loop in the annulus. 

We note that once we have translated to affine type $A$, the quivers of the representations corresponding to peripheral arcs we consider are of type $A$. 
This enables us to use the techniques of the theory of order ideals of fence posets.

We first recall the necessary background on order ideals (Section~\ref{sec:order-ideal}) then explain how we can go from a twice-punctured disk to an annulus (Section~\ref{sec:D-to-A}) and finally compute the growth coefficient of the tube $\mathcal{T}_1$ (Section~\ref{sec:outer-boundary}).

\subsection{Posets and order ideals}\label{sec:order-ideal}

We will explain here the correspondence between quivers of type $A$ and fence posets, and how submodules then correspond to order ideals. This allows us to use the works~\cite{Kan22} (see also subsequent work~\cite{KY22}) to compute the number of submodules of our type $A$ modules, i.e. the number of order ideals, via computation of $2\times2$ matrices.

We now recall the necessary notation and results.

A {\em fence poset} $P$ is a poset which 
comes from a quiver $\Quiver$ whose underlying graph is a type $A$ Dynkin diagram. The elements of $P$ are the vertices of $\Quiver$, and the covering relation of $P$
is given by the arrows of $\Quiver$: we have $i$ is covered by $j$  
if and only if there is an arrow from vertex $j$ to vertex $i$ 
in $\Quiver$.

Maximal chains of a fence poset are called \emph{segments}. A fence poset $P$ can therefore be represented by a vector $(|\alpha_1|,|\alpha_2|,\cdots,|\alpha_m|)$ 
where $|\alpha_i|$ is the number of arrows in the $i$-th segment $\alpha_i$ of the poset, reading the poset from left to right.

The segment $\alpha_i$ with vertices $\{v_1,\dots,v_s \}$, read from left to right, is a {\em direct segment} if $v_j>v_{j+1}$ for $1\le j<s$; it is an {\em inverse segment} if $v_j<v_{j+1}$ for $1\le j<s$.

An \emph{order ideal} $I$ in $P$ is a down-closed subset of $P$, i.e. 
for every $x\in I$, if  $y\in P$ and $y\leq_P x$, then 
$y$ is also in $I$.

\begin{example}\label{ex:fence}
In this example, we consider an orientation of a type $A_5$ Dynkin diagram as on the left of Figure~\ref{fig:poset-example}. 
The Hasse diagram of the poset $P$ corresponding to it is shown on the right.

\begin{figure}[htbp!]
\centering
\hspace{5mm}
{\xymatrix{1\ar[r] & 2\ar[r] & 3 & 4\ar[l] \ar[r] & 5}
}
\hspace{1cm}
\xymatrix{
 1 \ar[dr] &    &   & & \\
   & 2 \ar[dr]  &   & 4 \ar[dr]  \ar[dl]  & \\
   &                 & 3 & & 5}
   \hspace{5mm}
\caption{
A type $A_5$ quiver 
seen on the right as the Hasse diagram of a poset.}\label{fig:poset-example}
\end{figure}
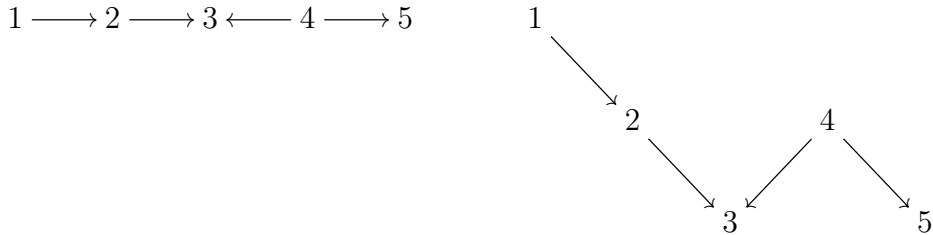

The poset $P$ has 11 order ideals: 
\[
\emptyset, 
\{3\}, \{5\}, \{2,3\}, \{3,5\}, \{1,2,3\}
\{2,3,5\}, \{3,4,5\}, \{1,2,3,5\}, \{2,3,4,5\}, \{1,2,3,4,5\}.
\]
The segments of $P$ are $1\to 2\to 3$, $3\leftarrow 4$, and $4\to 5$; 
the segments $1\to 2 \to 3$ and $4\to 5$ are direct, and $3 \leftarrow 4$ is inverse. 
The vector of this fence poset $P$ is $(2,1,1)$. 
\end{example}

For the rest of this subsection, let $\Quiver$ be a Dynkin quiver of type $A$ and $P$ its corresponding fence poset.

In Section \ref{sec:quiver}, we define the quiver $\Quiver_\gamma$ of an arc $\gamma$ to be a certain subquiver of the quiver $\Quiver_T$ of a triangulation. In similar spirit, we would like to define representations from subquivers.
\begin{definition}[From subquivers to modules]
\label{def:representation defined by a subquiver}
In general, given a subquiver $\Subquiver$ of $\Quiver$, let $M(\Subquiver)$ denote the representation of $\Quiver$ which has one copy of the field at each vertex of $\Quiver$ and the identity map for every arrow in $\Subquiver$; the rest of the vertices of $\Quiver$ are assigned the zero vector space, and the rest of the arrows the zero map.

When $\Subquiver$ is $\Quiver$ itself, $M(\Subquiver)=M(\Quiver)$ is an example of a {\em sincere} representation (of $\Quiver$) since it is non-zero at every vertex of $\Quiver$. In fact, it is the only indecomposable sincere representation, since $\Quiver$ is of type $A$.

When $\Subquiver$ is the empty subquiver, $M(\Subquiver)$ is the zero representation. 
\end{definition}

Note that the map from each non-empty, connected subquiver $\Subquiver$ of $\Quiver$ to the module $M(\Subquiver)$ gives a one-to-one correspondence between the non-empty connected subquivers of $\Quiver$ and indecomposable representations of $\Quiver$. 
Given an indecomposable module $M$, we can draw and identify the module $M$ with the corresponding subquiver $\Subquiver$ of $\Quiver$ which is equal to the support of $M$, i.e. we only draw the part of $\Quiver$ where $M$ is non-zero; we refer to this subquiver $\Subquiver$ as the \emph{quiver of $M$} and denote it by $\Quiver_M$. We will sometimes think of $\Quiver_M$ as an abstract directed graph. 

Furthermore, there is a one-to-one correspondence between order ideals of $P$ and submodules of $M(\Quiver)$ given by sending each order ideal $I$ of $P$ to the representation $M(FullQuiv(I))$, where $FullQuiv(I)$ is the full subquiver of $\Quiver$ whose vertices are $I$.  
So counting the number of submodules of $M(P)$ is the same as counting order ideals of $P$.

To count these order ideals, we will apply the method of Kantarc{\i} O{\u{g}}uz,~\cite{Kan22},  using a procedure with $2\times 2$ matrices arising recursively from smaller posets. 
We now describe her results using the language of quiver representations.

\begin{definition}
\label{def:rm} 
Let $\Quiver$ be a quiver of type $A_n$ with vertices $1$ through $n$, drawn so that the vertices are in increasing order from left to right (as in Figure \ref{fig:poset-example}). 
Let $M$ be the indecomposable sincere module $M(\Quiver)$ described in Definition~\ref{def:representation defined by a subquiver}.

\begin{enumerate}[(1)]
    \item Following \cite[Definition 3.1]{Kan22}, the
\emph{specialized rank matrix} of $M$, 
denoted by $\nabla(M)$, is defined as 

\[\nabla (M):=\begin{pmatrix} \mathfrak{R}(M) & -\mathfrak{R}_1(M) \\ {_0}\mathfrak{R}(M) & -{_0}\mathfrak{R}_1(M) \end{pmatrix}\]
where
\begin{itemize}

\item $\mathfrak{R}(M)$ is the number of submodules of $M$,

\item $\mathfrak{R}_1(M)$ is the number of submodules of $M$ containing the rightmost vertex of 
$\Quiver$,

\item ${_0}\mathfrak{R}(M)$ is the number of submodules of $M$ not containing the leftmost vertex of 
$\Quiver$, 

\item ${_0}\mathfrak{R}_1(M)$ is the number of submodules of $M$ containing the rightmost vertex but not the leftmost vertex of 
$\Quiver$.
\end{itemize}

\item We also recall the {\em dual specialized rank matrix} $\Delta(M)$ of $M$, \cite[Definition 4.1]{Kan22}:

\begin{equation*}
    \Delta(M) = \begin{pmatrix} \mathfrak{R}_1(M) & \mathfrak{R}_0(M) \\ {_0}\mathfrak{R}_1(M) & {_0}\mathfrak{R}_0(M) \end{pmatrix}, 
\end{equation*} 
where 
\begin{itemize}
\item $\mathfrak{R}_0(M)$ is the number of submodules of $M$ not containing the rightmost vertex of 
$\Quiver$, 
\item ${_0}\mathfrak{R}_0(M)$ is the number of submodules of $M$ not containing the leftmost vertex, nor the rightmost vertex of $\Quiver$. 
\end{itemize}
\end{enumerate}
\end{definition}

\begin{remark}
In \cite{Kan22}, rank matrices are defined using indeterminates. However, for our purposes it is enough to consider the specialized rank matrices as defined above. 
\end{remark}

\vspace{2mm}

\begin{example}~\label{ex:bigmatrix}
Consider the sincere indecomposable representation $M(\Quiver)$ 
of the following quiver of type $A_5$ (as in Example~\ref{ex:fence}).   
\[
\xymatrix{1\ar[r] & 2\ar[r] & 3 & 4\ar[l] \ar[r] & 5}
\] 
Then the specialized and the dual specialized rank matrices are 
\[
\nabla(M)=\begin{pmatrix} 11 & -7 \\ 
8 & -5 \end{pmatrix}, 
\hskip1cm
\Delta(M)=\begin{pmatrix} 7 & 4 \\ 
5 & 3 
\end{pmatrix}.
\]
\end{example}

One of the remarkable tools of the paper~\cite{Kan22} is that 
one can compute a rank matrix for a large poset by multiplying rank matrices of two smaller substructures, as we now explain.

Let $\Quiver$ be a quiver of type $A$, let $\Quiver_{(1)}$ and $\Quiver_{(2)}$ be subquivers such that $\Quiver$ is obtained by linking the two subquivers with an arrow as follows $\Quiver=\Quiver_{(1)} \searrow \Quiver_{(2)}$. By this, we mean that there is an arrow in $\Quiver$ which goes from the right most vertex of $\Quiver_{(1)}$ to the left most vertex of $\Quiver_{(2)}$ so that the resulting quiver is $\Quiver$ itself. 
Similarly, let 
$\Quiver_{(3)}$ and $\Quiver_{(4)}$ be subquivers such that $\Quiver$ is obtained by linking the two subquivers with an arrow as follows $\Quiver=\Quiver_{(3)} \swarrow \Quiver_{(4)}$. By this, we mean that there is an arrow in $\Quiver$ which goes from the left most vertex of $\Quiver_{(4)}$ to the right most vertex of $\Quiver_{(3)}$ so that the resulting quiver is $\Quiver$ itself. 
See Figure~\ref{fig:glue-Qs} for an example of both situations.

\begin{proposition}[Propositions 3.2, 4.2 of~\cite{Kan22}]\label{prop:order} 
Let $\Quiver$ be a quiver of type $A$ and let $M=M(Q)$ be the sincere indecomposable representation of $\Quiver$.

\begin{enumerate}[(1)] 
\item \label{prop:order:itm1}
Let $\Quiver=\Quiver_{(1)} \searrow \Quiver_{(2)}$. 
Let $M_1$ and $M_2$ be the sincere indecomposable representations of $\Quiver_{(1)}$ and $\Quiver_{(2)}$, respectively. Then there is an exact sequence 
$0\to M_2\to M\to M_1\to 0$ and 
\[
\nabla(M) = 
\nabla(M_1)\nabla(M_2).
\]

\item \label{prop:order:itm2} 
Let 
$\Quiver=\Quiver_{(3)}\swarrow \Quiver_{(4)}$. 
Let $M_3$ and $M_4$ be the sincere indecomposable representations of $\Quiver_{(3)}$ and $\Quiver_{(4)}$, respectively. Then there is an exact sequence 
$0\to M_3\to M\to M_4\to 0$ and 
\[
\Delta(M) = \Delta(M_3)  \Delta(M_4). 
\]
\end{enumerate}
\end{proposition}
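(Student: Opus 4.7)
The plan is to verify both the short exact sequence and the matrix identity by analyzing submodules of $M(\Quiver)$ combinatorially, via subsets of vertices, and then matching the resulting inclusion–exclusion counts with the entries of the matrix product. Since the assertion is attributed to Kantarc{\i} O{\u{g}}uz, a complete self-contained argument is short; what follows describes the approach.

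First I would set up the combinatorial model. Because $\Quiver$ is of type $A$ and $M=M(\Quiver)$ is the sincere indecomposable with all structure maps equal to the identity, a submodule $N$ of $M$ is determined by its support $S\subseteq V(\Quiver)$, which must satisfy: whenever $i\in S$ and $i\to j$ is an arrow of $\Quiver$, also $j\in S$. Thus $\mathfrak{R}(M)$, $\mathfrak{R}_1(M)$, ${_0}\mathfrak{R}(M)$, ${_0}\mathfrak{R}_1(M)$ are counts of such subsets with prescribed behavior at the leftmost and rightmost vertices of $\Quiver$.

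For part (\ref{prop:order:itm1}), write $r$ for the rightmost vertex of $\Quiver_{(1)}$ and $\ell$ for the leftmost vertex of $\Quiver_{(2)}$, so the only arrow of $\Quiver$ joining the two subquivers is $a\colon r\to\ell$. The submodule $M_2\subseteq M$ is supported on $V(\Quiver_{(2)})$: this is closed under all arrows of $\Quiver$ because $a$ starts outside $V(\Quiver_{(2)})$. The quotient $M/M_2$ is visibly $M_1$, yielding the exact sequence. Next, every submodule $N\subseteq M$ with support $S=S_1\sqcup S_2$ (where $S_i\subseteq V(\Quiver_{(i)})$) is exactly a pair $(S_1,S_2)$ in which $S_i$ is a submodule support in $\Quiver_{(i)}$ and the implication ``$r\in S_1 \Rightarrow \ell\in S_2$'' holds. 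Therefore the number of such $N$ equals the free product minus the number of violations:
\[
\mathfrak{R}(M)=\mathfrak{R}(M_1)\mathfrak{R}(M_2)-\mathfrak{R}_1(M_1)\,{_0}\mathfrak{R}(M_2),
\]
which is precisely the top-left entry of $\nabla(M_1)\nabla(M_2)$. The three remaining counts $\mathfrak{R}_1(M)$, ${_0}\mathfrak{R}(M)$, ${_0}\mathfrak{R}_1(M)$ are handled identically, each time conditioning on whether the leftmost vertex of $\Quiver$ (which lies in $\Quiver_{(1)}$) and the rightmost vertex of $\Quiver$ (which lies in $\Quiver_{(2)}$) are in the support, and subtracting the number of pairs violating the arrow condition at $a$. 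Matching these four identities against the four entries of $\nabla(M_1)\nabla(M_2)$ — where the minus signs in $\nabla$ have been chosen precisely to encode this inclusion–exclusion — proves part (\ref{prop:order:itm1}).

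Part (\ref{prop:order:itm2}) is the symmetric statement: now the joining arrow goes from the leftmost vertex of $\Quiver_{(4)}$ to the rightmost vertex of $\Quiver_{(3)}$, so $M_3$ is the submodule supported on $V(\Quiver_{(3)})$ (the arrow ends in $\Quiver_{(3)}$, so this support is closed) and $M_4$ is the quotient, giving the stated exact sequence. Because the leftmost vertex of $\Quiver$ now sits in $\Quiver_{(3)}$ and the rightmost in $\Quiver_{(4)}$, the natural matrix recording these counts is $\Delta$ rather than $\nabla$; the same four-case inclusion–exclusion argument, carried out with the roles of ``contains rightmost of $\Quiver_{(3)}$'' and ``contains leftmost of $\Quiver_{(4)}$'' swapped, gives $\Delta(M)=\Delta(M_3)\Delta(M_4)$. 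The main obstacle is purely bookkeeping: keeping track of which vertex is leftmost/rightmost of which subquiver, and recognizing that the specific placement of minus signs (in $\nabla$) and absence of them (in $\Delta$) is exactly what turns a four-case inclusion–exclusion into a $2\times 2$ matrix product. Once this dictionary is fixed, each of the four entries is a one-line verification.
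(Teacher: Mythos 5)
Your proof is correct. The paper itself gives no argument for this proposition --- it is quoted directly from Propositions 3.2 and 4.2 of [Kan22] --- and your self-contained verification (submodules of the sincere type $A$ module are arrow-closed vertex subsets; splitting at the connecting arrow gives a four-case count that the sign pattern of $\nabla$, respectively the unsigned $\Delta$, packages as a $2\times 2$ matrix product) is exactly the transfer-matrix reasoning underlying the cited result, so nothing further is needed.
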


The matrices $\nabla(M)$ and $\Delta(M)$ contain the same information, as we can obtain one from the other. The connection between the matrices $\nabla(M)$ and $\Delta(M)$ is as follows. 

\begin{lemma}[Lemma 4.5 of~\cite{Kan22}]\label{lm:nabla-delta}
Let $\Quiver$ be a quiver of type $A$ and let $M$ be the sincere indecomposable representation of $\Quiver$. Then one has: 
\[
\Delta(M)  =  \nabla(M) \begin{pmatrix}
        0 & 1 \\ -1 & 1 
\end{pmatrix} 
\quad
\mbox{ and }
\quad
\nabla(M)  =  \Delta(M) \begin{pmatrix}
        1 & -1 \\ 1 & 0 
\end{pmatrix}.
\]
\end{lemma}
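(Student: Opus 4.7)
The plan is to verify both identities by direct matrix computation, relying on a single elementary observation about how submodules partition according to whether they contain the rightmost vertex of $\Quiver$.

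The key observation is that since every submodule of $M$ either contains the rightmost vertex or does not, we have the partition identities
\[
\mathfrak{R}(M) = \mathfrak{R}_1(M) + \mathfrak{R}_0(M)
\quad\text{and}\quad
{_0}\mathfrak{R}(M) = {_0}\mathfrak{R}_1(M) + {_0}\mathfrak{R}_0(M),
\]
where the second equation is just the first identity applied to the submodules of $M$ that do not contain the leftmost vertex. Equivalently, $\mathfrak{R}_0(M) = \mathfrak{R}(M) - \mathfrak{R}_1(M)$ and ${_0}\mathfrak{R}_0(M) = {_0}\mathfrak{R}(M) - {_0}\mathfrak{R}_1(M)$.

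To prove the first identity, I would simply compute
\[
\nabla(M)\begin{pmatrix} 0 & 1 \\ -1 & 1 \end{pmatrix}
=
\begin{pmatrix} \mathfrak{R}(M) & -\mathfrak{R}_1(M) \\ {_0}\mathfrak{R}(M) & -{_0}\mathfrak{R}_1(M) \end{pmatrix}
\begin{pmatrix} 0 & 1 \\ -1 & 1 \end{pmatrix}
=
\begin{pmatrix} \mathfrak{R}_1(M) & \mathfrak{R}(M)-\mathfrak{R}_1(M) \\ {_0}\mathfrak{R}_1(M) & {_0}\mathfrak{R}(M)-{_0}\mathfrak{R}_1(M) \end{pmatrix},
\]
and then substitute the partition identities into the second column to recognize the right-hand side as $\Delta(M)$.

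For the second identity, I would observe that $\begin{pmatrix} 0 & 1 \\ -1 & 1 \end{pmatrix}$ has determinant $1$ and inverse $\begin{pmatrix} 1 & -1 \\ 1 & 0 \end{pmatrix}$, so the second identity follows formally from the first by right-multiplication. There is no real obstacle here; the only content is the combinatorial partition identity, and the rest is a two-by-two matrix product. In particular, no case analysis on the orientation of $\Quiver$ is needed, since the definitions of the entries of $\nabla(M)$ and $\Delta(M)$ only depend on the identification of the leftmost and rightmost vertices, not on the orientation of the arrows.
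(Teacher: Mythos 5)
Your verification is correct: the partition identities $\mathfrak{R}(M)=\mathfrak{R}_1(M)+\mathfrak{R}_0(M)$ and ${_0}\mathfrak{R}(M)={_0}\mathfrak{R}_1(M)+{_0}\mathfrak{R}_0(M)$ are exactly the content of the lemma, the matrix product $\nabla(M)\left(\begin{smallmatrix}0&1\\-1&1\end{smallmatrix}\right)$ reproduces $\Delta(M)$ entry by entry, and the second identity does follow formally since $\left(\begin{smallmatrix}1&-1\\1&0\end{smallmatrix}\right)$ is the inverse of $\left(\begin{smallmatrix}0&1\\-1&1\end{smallmatrix}\right)$. Note that the paper itself gives no proof of this statement, citing it as Lemma 4.5 of [Kan22]; your argument is the natural self-contained verification of that cited result, and your remark that no case analysis on the orientation is needed is also accurate, since all four entries are defined purely by support conditions at the leftmost and rightmost vertices.
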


\begin{example} \label{ex:matrix-example}

Let $M$ be the indecomposable sincere representation of the quiver $\Quiver$ from Figure~\ref{fig:glue-Qs}. 

In order to compute the number of submodules of $M$ we compute the specialized rank matrix of $M$. To do so, we consider $\Quiver=\Quiver_{(1)}\searrow \Quiver_{(2)}$ as in Figure~\ref{fig:glue-Qs}. 
Let $M_1$ and $M_2$ be the 
indecomposable sincere modules
for $\Quiver_{(1)}$ and for $\Quiver_{(2)}$ respectively. 


Computing the specialized rank matrices for $M_1$ and $M_2$ and using Proposition~\ref{prop:order}\ref{prop:order:itm1}, we have 
\[
\nabla(M)=
\nabla(M_1)
\nabla(M_2) 
=\begin{pmatrix} 3 & -2 \\ 
2 & -1 \end{pmatrix} \begin{pmatrix} 5 & -3 \\ 
2 & -1 \end{pmatrix}=\begin{pmatrix} 11 & -7 \\ 
8 & -5 \end{pmatrix}.
\]
The entry in position $(1,1)$ of each of the rank matrices is the number of submodules of the corresponding indecomposable sincere modules. So we found that $M$ has $11$ submodules. This number is confirmed by list of $11$ order ideals of the fence poset (corresponding to this quiver) given in Example~\ref{ex:fence}. 

Alternatively, we use  $\Quiver=\Quiver_{(3)}\swarrow \Quiver_{(4)}$ as in Figure~\ref{fig:glue-Qs} (the first is $1\to 2 \to 3$, and the second is $4\to 5$), $M_3$ and $M_4$ the indecomposable sincere modules for these quivers,  and 
Proposition~\ref{prop:order}\ref{prop:order:itm2}, to get:  
\[
\Delta(M)=
\Delta(M_3)
\Delta(M_4) 
=
\begin{pmatrix}
 3 & 1 \\ 
 2 & 1 
\end{pmatrix}
\begin{pmatrix}
2 & 1 \\ 
1 & 1 
\end{pmatrix}
=\begin{pmatrix} 7 & 4 \\ 
5 & 3 \end{pmatrix}.
\]

One can check that indeed (as in Lemma~\ref{lm:nabla-delta})
\[
\Delta(M) = \nabla(M)\begin{pmatrix}
    0 & 1 \\ -1 & 1
\end{pmatrix} 
\mbox{ and }
\nabla(M) = \Delta(M)\begin{pmatrix}
    1 & -1 \\ 1 & 0
\end{pmatrix}
\]
\end{example}

\begin{figure}[ht]
\centering
\includegraphics[scale=.7]{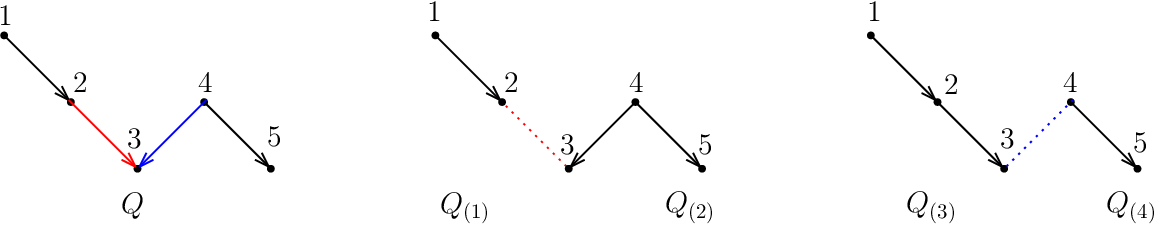}
\caption{The quiver $\Quiver$ is obtained from $\Quiver_{(1)}$ and $\Quiver_{(2)}$ by linking them with an arrow (in red) or from $\Quiver_{(3)}$ and $\Quiver_{(4)}$ with an arrow (in blue).}
\label{fig:glue-Qs}
\end{figure}

\subsection{From a twice-punctured disk to an annulus}\label{sec:D-to-A}

We will now associate a triangulation $T'$ of an annulus to any triangulation $T$ of the twice-punctured disk $(S,M)$ in such a way that a quiddity sequence of $T'$ is the same as the quiddity sequence of the boundary of $T$.

The annulus we construct will have exactly the same vertices on the outer boundary (as the boundary of $(S,M)$). 
The inner boundary will have marked points for $\punctureP$ and $\punctureQ$, (Figures~\ref{fig:set-up-disk-I} and \ref{fig:set-up-disk-II}), or only for one of them (Figure~\ref{fig:set-up-disk-III}), and possibly with additional marked points.

Informally speaking, we draw an arc connecting 
the two punctures $\punctureP$ and $\punctureQ$ and create an inner boundary component by cutting along this arc. 
Before we explain this, we observe that we can assume the triangulation $T$ has no peripheral arcs:

\begin{lemma}\label{lem:remove-peripheral} 
Let $\widetilde{T}$ be a triangulation of a twice-punctured disk $\widetilde{S}$, and 
let $T$ be the triangulation of a twice-punctured disk (possibly with fewer marked points) $S$ 
which is obtained by removing all triangles of $\widetilde{T}$ which contain peripheral arcs. (In particular, the new triangulation $T$ does not contain any ``ear triangles'' --- triangles with two boundary segments at the boundary.) 
Then the growth coefficient of the frieze from the boundary of $\widetilde{S}$ corresponding to $\widetilde{T}$ is equal to that of the frieze from the 
boundary of the triangulation $T$ of the new surface $S$.
\end{lemma}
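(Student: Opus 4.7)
The plan is to reduce to a single ``ear removal'' and to show that this elementary step preserves the growth coefficient. I would proceed by induction on the number of peripheral arcs of $\widetilde{T}$, removing one innermost peripheral arc at each stage. The key geometric observation is that any innermost peripheral arc of $\widetilde{T}$ must be of the form $\gamma_{v,v+2}^0$: indeed, any arc of $\widetilde{T}$ contained in the disk region $R$ cut off by a peripheral arc $\gamma_{v_s,v_t}^0$ is itself peripheral (since $R$ contains no puncture), so if $\gamma_{v_s,v_t}^0$ is innermost then $R$ contains no other arcs of $\widetilde{T}$ and must therefore be a triangle with two boundary edges --- that is, an ear with apex at $v_s + 1$.

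By Remark~\ref{rem:dots}, only the ear triangle is incident to the apex $v_s+1$, so the corresponding quiddity entry equals $1$, and the two neighboring vertices $v_s$ and $v_s + 2$ each see the ear as one of their incident regions. Removing the ear triangle deletes $v_s+1$ as a marked point, merges the two boundary segments at $v_s+1$, and decreases the quiddity entries at $v_s$ and $v_s + 2$ by $1$ each. Thus the quiddity sequence changes locally from $(\dots, a, 1, b, \dots)$ to $(\dots, a-1, b-1, \dots)$, its length dropping by one.

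To see that the growth coefficient is unchanged, I would translate to the monodromy matrix. Setting
\[
M(x) := \begin{pmatrix} x & -1 \\ 1 & 0 \end{pmatrix} \in \mathrm{SL}_2(\mathbb{Z}),
\]
a direct computation verifies the identity
\[
M(a)\,M(1)\,M(b) \;=\; M(a-1)\,M(b-1),
\]
so the product $M(a_1) \cdots M(a_m)$ associated to the quiddity $(\dots, a, 1, b, \dots)$ equals the one associated to the reduced quiddity $(\dots, a-1, b-1, \dots)$. It is standard in the frieze literature, and follows from the Chebyshev recurrence~\eqref{eq:recurrence} together with $s_0 = 2$ (see~\cite[Proposition 2.10]{BFPT19}), that the growth coefficient $s(\quid)$ coincides with the trace of this monodromy matrix. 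Since the monodromy is unchanged by the local reduction, so is its trace, and hence the growth coefficient.

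The main obstacle I anticipate is establishing the identification $s(\quid) = \tr(M(a_1)\cdots M(a_m))$ from the purely frieze-theoretic Definition~\ref{def:growth-coeff}: although standard, this link between the diamond-rule recursion on the frieze and products of $2 \times 2$ matrices is not stated explicitly in the excerpt. Alternatively, one can avoid this identification altogether by working directly on the frieze: using the diamond rule, one checks column-by-column that inserting an entry $1$ in the quiddity (together with compensating increments at the two neighbors) leaves the sequence $s_k(\quid)$ pointwise invariant, which suffices.
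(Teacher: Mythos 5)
Your argument is correct, and it is essentially a self-contained proof of the result that the paper simply cites: the paper's entire proof is ``this follows from repeatedly applying \cite[Theorem 3.1]{BFPT19}'', which is precisely the reduction step you establish (deleting an ear changes the quiddity locally from $(\dots,a,1,b,\dots)$ to $(\dots,a-1,b-1,\dots)$ and preserves the growth coefficient). Your geometric reduction to innermost peripheral arcs is sound --- the region cut off by a peripheral arc contains no puncture, so an innermost such arc bounds an ear, and peeling ears one at a time exhausts all triangles containing peripheral arcs --- and the matrix identity $M(a)M(1)M(b)=M(a-1)M(b-1)$ checks out. What your route buys is independence from \cite{BFPT19}; what it costs is the identification $s(\quid)=\tr\bigl(M(a_1)\cdots M(a_m)\bigr)$, which, as you note, is not in the paper. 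It is, however, exactly the formalism the paper uses later (the matrices $\nabla(U_t)$ of Lemma~\ref{lm:Us-Ds} are your $M(t+2)$, and Proposition~\ref{prop:growth-I} computes the growth coefficient as such a trace), and it follows from \cite[Proposition 2.10]{BFPT19} or from your proposed direct diamond-rule verification, so this is a standard fact to import rather than a gap in the logic. Two small points you should make explicit: (i) the reduced sequence consists of positive integers, since a quiddity entry equal to $1$ forces its vertex to be the apex of an ear and two adjacent vertices cannot both be apexes (their loop arcs would cross), so $a,b\ge 2$; (ii) the trace is invariant under cyclic permutation, so the local replacement can be performed anywhere in the cyclic quiddity sequence, including across the wrap-around position.
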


\begin{proof}
This follows from repeatedly applying Theorem~\cite[Theorem 3.1]{BFPT19}. 
\end{proof}

From now on we always assume that the triangulation of the twice-punctured disk contains no peripheral arcs thanks to Lemma~\ref{lem:remove-peripheral}. 

\begin{notation}\label{notation:p-q}
Let $T$ be a triangulation of a twice-punctured disk without any peripheral arcs. 
We distinguish three main cases: 

\begin{enumerate}[(I)]
    \item \label{notation:p-q:item:I} there is no arc connecting 
    the two punctures $\punctureP$ and $\punctureQ$,
    \item\label{notation:p-q:item:II} 
    there is an arc connecting 
    $\punctureP$ and $\punctureQ$, and both $\punctureP$ and $\punctureQ$ have arcs connected to the boundary of the disk,
    \item\label{notation:p-q:item:III} 
    there is an arc connecting
    $\punctureP$ and $\punctureQ$, and only one of them has arcs connected to the boundary.
\end{enumerate}
\end{notation}

We are now ready to introduce some notation, see Figures~\ref{fig:set-up-disk-I}, \ref{fig:set-up-disk-II} and \ref{fig:set-up-disk-III}. 

\begin{enumerate}[(a)]
\item 
As before, we label the marked points by $1,\dots, n-2$ counterclockwise around the boundary and denote the punctures by $\punctureP$ and $\punctureQ$. 
Let $p$ be the number of arcs incident to $\punctureP$ in a small neighborhood of $\punctureP$: if we cut the surface with a small disk centered at $\punctureP$, then $p$ is the number of arcs ending at $P$ in that disk. Similarly, let $q$ be the number of arcs in a small neighborhood of $\punctureQ$.
\item
In Case \ref{notation:p-q:item:I}, we denote the arcs starting at the boundary and ending at the puncture $\punctureP$ by $d_1,\dots, d_p$ (where $p\ge 1$), counterclockwise around the boundary, with $d_p$ ending at the vertex $1$ on the boundary. Similarly, we denote the arcs starting at the boundary and ending at $\punctureQ$ by $e_1,\dots, e_q$ (where $q\ge 1$), counterclockwise around the boundary, with $e_1$ starting at vertex $w$, where $1\le w\le n-p$. 
The remaining arcs all cross the line connecting $\punctureP$ and $\punctureQ$. We denote these arcs of $T$ by $c_1,\dots, c_m$ , in the order they appear when going from $\punctureP$ to $\punctureQ$. Note that $m+p+q=n+1$, the total number of arcs (cf. Remark~\ref{rem:number-of-arcs}). 
See left hand picture of Figure~\ref{fig:set-up-disk-I}.
\item
For \ref{notation:p-q:item:II} and \ref{notation:p-q:item:III} we denote the arcs at $\punctureP$ or at $\punctureQ$ as indicated in Figures~\ref{fig:set-up-disk-II} and~\ref{fig:set-up-disk-III}. Note that in case \ref{notation:p-q:item:III} the loop arc around the radius connecting $\punctureP$ and $\punctureQ$ is counted twice by definition. 
\end{enumerate}

In Case \ref{notation:p-q:item:I}, 
we have $p,q\ge 1$. 
In Case \ref{notation:p-q:item:II}, 
we have $p,q\ge 3$. In Case 
\ref{notation:p-q:item:III}, 
we have $p\ge 6$ while $q=1$, or $q=1$ while $q\ge 6$ (as $n\ge 4$); we will always assume $p\ge 6$ and $q=1$ when considering case
\ref{notation:p-q:item:III}.

\begin{figure}[htbp!]
\centering
\includegraphics[scale=.5]{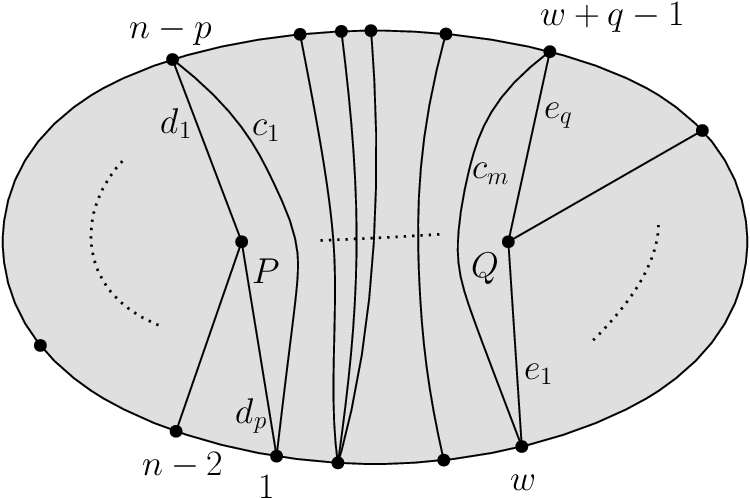}
\hskip.5cm
\includegraphics[scale=.5]{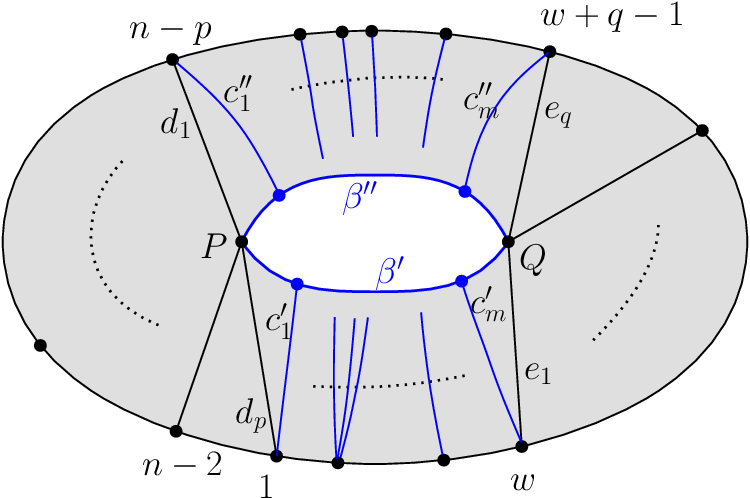}
\caption{Case \ref{notation:p-q:item:I}: $T$ has no arc connecting $\punctureP$ and $\punctureQ$.}
    \label{fig:set-up-disk-I}
\bigskip
\centering
\includegraphics[scale=.5]{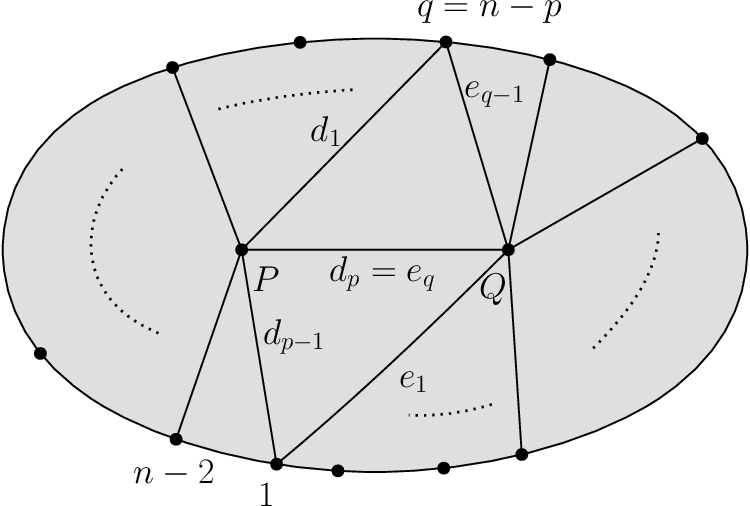}
\hskip.5cm
\includegraphics[scale=.5]{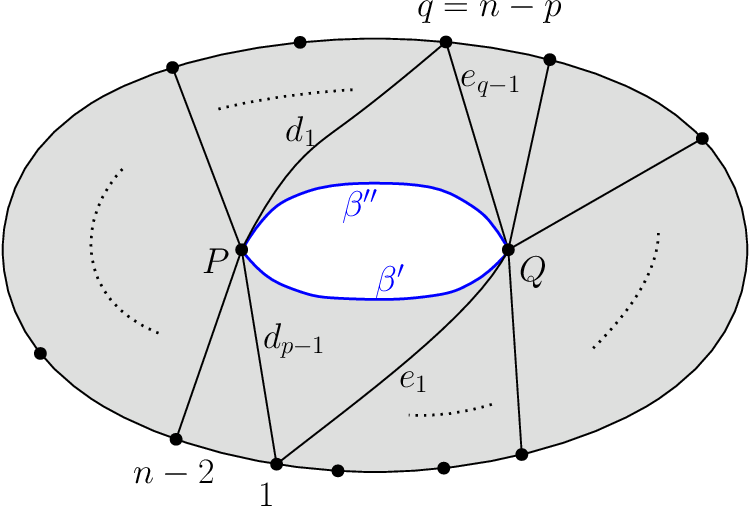}
    \caption{Case \ref{notation:p-q:item:II}: $T$ has an arc connecting $\punctureP$ and $\punctureQ$ but no loop.}
    \label{fig:set-up-disk-II}
\bigskip
\centering
\includegraphics[scale=.5]{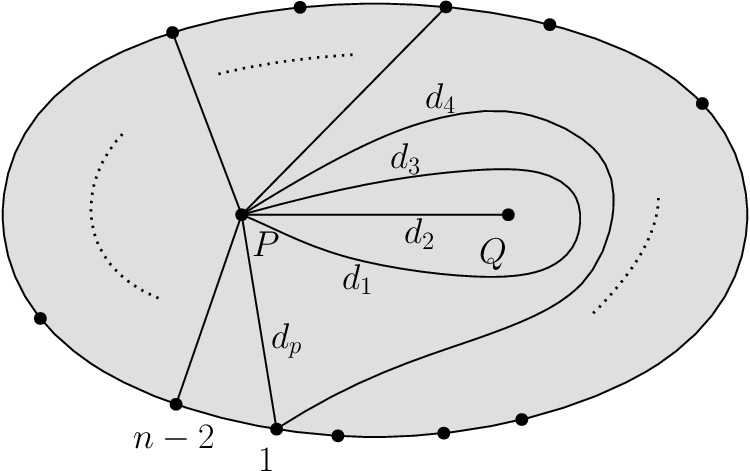}
\hskip.5cm
\includegraphics[scale=.5]{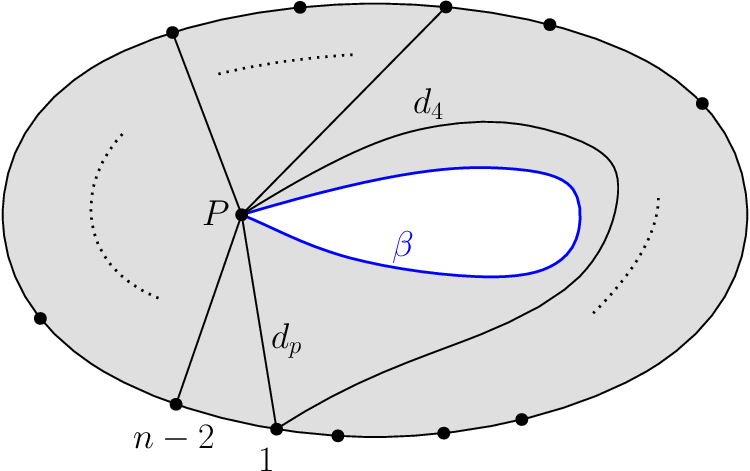}
    \caption{Case \ref{notation:p-q:item:III}: $T$ has a self-folded triangle with radius connecting $\punctureP$ and $\punctureQ$.}
    \label{fig:set-up-disk-III}
\end{figure}

Let us point out that, in Case \ref{notation:p-q:item:I}, there may be multiple arcs among the $c_i$ starting at vertex $1$ or vertex $n-p$.

We are now ready to describe the triangulated annulus $T'$ associated to $T$. 
We detail this process case by case:

\noindent
\underline{Case \ref{notation:p-q:item:I}}: 
In order to create an inner boundary component, 
we draw a curve $\beta'$ from $\punctureP$ to $\punctureQ$ following the boundary segments along the vertices $1,2,\dots,w$, and $\beta''$ from $\punctureQ$ to $\punctureP$ following the boundary segments along the vertices $w+q-1$
to $n-p$. See the right hand side of Figure~\ref{fig:set-up-disk-I}. 

In order to achieve the same quiddity sequence on the outer boundary of $T'$ as that on the boundary of $T$, we do the following: 
Each arc $c$ starting at one of the points of $\{1,2,\dots, w\}$ and ending at one of the points of $\{w+q-1,\dots, n-p\}$ gets replaced by two arcs $c'$ and $c''$ by cutting out a middle segment and creating vertices on $\beta'$ and $\beta''$ which are the endpoints of $c'$ and $c''$ respectively, i.e. the arc $c'$ ends where $c$ would meet $\beta'$ and $c''$ where $c$ would meet $\beta''$.

Every fan of arcs of $T$ at a boundary point $i$ in  $\{1,2,\dots,w\}$ is of the form 
\[(d_p), c_t,c_{t+1},\dots, c_{t+s_t}, (e_1)
\] 
for some $t\in\{1,\dots, m\}$ and $s_t\ge 0$ (with $t+s_t\le m$). 
We wrote $d_p$ and $e_1$ in parentheses because $d_p$ only occurs for $i=1$ and $e_1$ only occurs for $i=w$; if $i=1=w$, then both $d_p$ and $e_1$ occur.
On the annulus, we replace this fan by the fan $(d_p), c_t',\dots, c_{t+s_t}',(e_1)$ at the corresponding point on the outer boundary of the annulus. 

Similarly, every fan of arcs of $T$ at one of the points $\{w+q-1,\dots, n-p\}$ is of the form $(e_q), c_t,c_{t-1},\dots, c_{t-s_t-1}, (d_1)$. 
It gets replaced by the fan of arcs 
$(e_q),c_t'',c_{t-1}'',\dots, c_{t-s_t-1}'',(d_1)$ at the corresponding point on the outer boundary of the annulus. 

What we currently have
is not a triangulation in general. 
Between any two 
fans at neighboring vertices $i$ and $i+1$ on the outer boundary, we have a quadrilateral formed by: the boundary segment between $i$ and $i+1$, the first arc in the fan at $i+1$, a segment on $\beta'$ (or on $\beta''$), and the last arc of the fan at $i$. 
For example, in the following figure (which is in an excerpt of the triangulation of Figure~\ref{fig:set-up-disk-I}), we see three quadrilaterals; between the fans at vertices $1$ and $2$ of the lower boundary, 
between the fans at vertices $2$ and $3$, and between the fans at vertices $3$ and $w$: 
\[
\includegraphics[scale=.7]{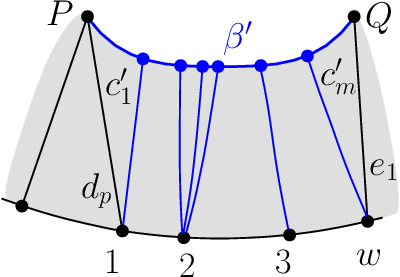}
\]
To produce a triangulation, in the final step we identify the endpoints of the leftmost arc of the fan at $i+1$ and of the rightmost arc of the fan at $i$, turning every quadrilateral into a triangle. 
In our example, this final process removes three vertices from the upper boundary, as shown in the figure below: 
\[
\includegraphics[scale=.7]{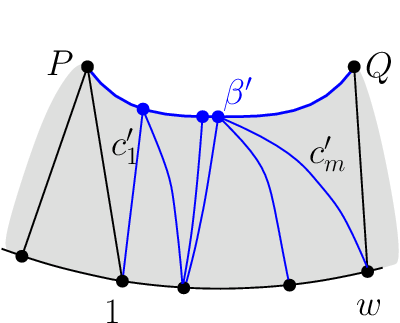}
\]

\noindent
\underline{Case \ref{notation:p-q:item:II}}: 
$T$ contains the arc connecting $\punctureP$ and $\punctureQ$. 
and we have $p,q\ge 3$
We replace the arc connecting $\punctureP$ and $\punctureQ$ by two curves $\beta'$ and $\beta''$ to form the inner boundary. See the right hand side of Figure~\ref{fig:set-up-disk-II}. 

\noindent
\underline{Case \ref{notation:p-q:item:III}}:
$T$ contains a self-folded triangle such that its radius is the arc $d_2$ connecting $\punctureP$ and $\punctureQ$, and the loop of the self-folded triangle is at $\punctureP$ (left hand picture in Figure~\ref{fig:set-up-disk-III}).
In this case, we cut out the self-folded triangle from the surface; the puncture $\punctureQ$ and the radius $d_2$ are removed as a result. 
The loop arc becomes the boundary $\beta$ of the newly created inner boundary. 
See the right hand picture in Figure~\ref{fig:set-up-disk-III}.

In all three cases, we have constructed an annulus $A=A(S)$, with triangulation $T'$, whose outer boundary defines the same quiddity sequence as the boundary of the triangulation $T$ of the twice-punctured 
disk $S$. 
Note that the new triangulation $T'$ does not have any peripheral arcs.

We have the following analog of Lemma~\ref{lm:quid-peripheral}.

\begin{lemma}\label{lm:quid-peripheral-ann}

Consider a peripheral arc of the form $\gamma_{i-1,i+1}=\gamma_{i-1,i+1}^0$ 
in 
the annulus $A=A(S)$ (reducing endpoints modulo $n-2$) and let $r'_i$ be the number of arcs of $T'$ incident with $i$. Then we have 
\[
\rho(M(\gamma_{i-1,i+1})) = r'_i+1 
\]
In particular, if $F$ is the infinite frieze from the tube $\mathcal{T}_1$ corresponding to the boundary of $T$, and  
if $F'$ is the frieze from the outer boundary of the triangulated annulus $T'$, then the two friezes have the same quiddity sequence.  
\end{lemma}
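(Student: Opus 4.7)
The plan is to prove the two identities in the lemma separately; the equality of quiddity sequences will follow.

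For the first identity $\rho(M(\gamma_{i-1,i+1})) = r'_i + 1$, the main observation I would use is that the construction of the triangulated annulus $(A, T')$ from $(S, T)$ yields a triangulation with no self-folded triangles and in which no outer boundary vertex is incident with a loop arc. This can be verified by inspecting each of the cases~\ref{notation:p-q:item:I}, \ref{notation:p-q:item:II}, \ref{notation:p-q:item:III}: every arc of $T'$ at an outer boundary vertex $i$ is either inherited from a non-loop, non-peripheral arc of $T$ (with at most one endpoint relocated to the new inner boundary), or is one of the two halves $c_k'$, $c_k''$ of a cut arc. Consequently, for the peripheral arc $\gamma_{i-1,i+1}$ in the annulus, the quiver $\Quiver_{\gamma_{i-1,i+1}}$ falls into case~(a) of Remark~\ref{rem:module-arc}: it is an equioriented quiver of type $A_{r'_i}$. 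The same counting argument as in case~(a) of Lemma~\ref{lm:quid-peripheral} (which depends only on the shape of the quiver, not on the ambient surface) then yields $\rho(M(\gamma_{i-1,i+1})) = r'_i + 1$.

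For the second (``in particular'') assertion, I would compare the local region counts at $i$ on the two sides. By Remark~\ref{rem:dots}, the quiddity entry at $i$ for the frieze from $\mathcal{T}_1$ on $S$ equals the number of regions of $T$ incident with $i$ in a small neighborhood of $i$. Applying the same principle to the annulus, the quiddity entry at $i$ for the frieze from the outer boundary of $T'$ equals the number of regions of $T'$ at $i$, which by the first step equals $r'_i + 1$. It thus suffices to check that the construction preserves the local region count at every outer boundary vertex $i$. This is true by inspection: the new inner boundary (the curves $\beta'$, $\beta''$ or $\beta$) is drawn away from the outer boundary; each non-loop arc of $T$ incident with $i$ becomes an arc of $T'$ still incident with $i$; and each loop at $i$ in $T$ is cut by the new inner boundary into exactly two arcs of $T'$ at $i$. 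In terms of arc-end counts, this yields
\[
r'_i = r_i + (\text{number of loops at } i \text{ in } T),
\]
and via Lemma~\ref{lm:quid-peripheral} this is exactly the identity of region counts at $i$, hence the equality of quiddity sequences.

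The main obstacle I expect is the loop-free verification in Step~1. In each of cases~\ref{notation:p-q:item:I}, \ref{notation:p-q:item:II}, \ref{notation:p-q:item:III}, one must check that any loop at an outer boundary vertex of $T$ is genuinely cut by the inner-boundary construction, and that no new loop is created by identifying endpoints on $\beta'$, $\beta''$. Case~\ref{notation:p-q:item:III}, where a self-folded triangle sits between $\punctureP$ and $\punctureQ$, needs particular care since the loop arc of the self-folded triangle itself becomes the new inner boundary $\beta$, and the removal of $\punctureQ$ together with the radius $d_2$ must be tracked to confirm that every arc of $T'$ at the outer boundary still terminates at a distinct marked point of $\beta$.
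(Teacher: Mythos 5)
Your proposal is correct and follows essentially the same route as the paper: because $T'$ has no peripheral arcs, every arc at an outer boundary vertex runs to the inner boundary, so the quiver of $\gamma_{i-1,i+1}$ is an equioriented type $A$ quiver with $r'_i$ vertices and the count $r'_i+1$ follows exactly as in case (a) of Lemma~\ref{lm:quid-peripheral}. Your extra bookkeeping $r'_i = r_i + (\text{number of loops at } i)$ for the ``in particular'' part is just a more explicit verification of what the paper builds into (and asserts for) the construction of $T'$ in Section~\ref{sec:D-to-A}, namely that the outer boundary of $T'$ reproduces the quiddity sequence of the boundary of $T$.
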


\begin{proof}
Since $T'$ has no peripheral arcs, every arc in $T'$ has endpoints at both the outer and inner boundary. As in case \ref{rem:module-arc:itm:a} in the proof of Lemma~\ref{lm:quid-peripheral}, the quiver corresponding to $\gamma_{i-1,i+1}$ is an equioriented type $A$ quiver with $r_i'$ vertices, and the result follows.
\end{proof}

%

\subsection{From peripheral arcs to string modules}\label{sec:arc-gives-string}



Let $T'$ denote the triangulation of  the annulus $A$ constructed in Section~\ref{sec:D-to-A}, and let $\AnnulusQuiver=\Quiver_{T'}$ denote the quiver of $T'$.
In this section we explain how to associate peripheral arcs in 
the annulus $A$ to string representations of $\AnnulusQuiver$, following \cite{ABCP10, BZ11}.

\begin{remark}\label{rem:shape-Q-T-annulus}
Suppose our annulus has $n$ marked points on a boundary component and $m$ marked points on the other. 
Since our triangulation $T'$ has no peripheral arc, the quiver $\AnnulusQuiver$ 
of the triangulated annulus $T'$ consists of a cyclic graph with $n$ arrows pointing clockwise and $m$ arrows pointing counterclockwise.
\end{remark}

Consider any non self-crossing peripheral arc $\gamma$. 
Since our triangulation $T'$ of the annulus   contains no peripheral arcs, $\gamma$ crosses each arc of $T'$ at most once. 
The quiver $\Quiver_\gamma$ of $\gamma$, defined in Section~\ref{sec:quiver}, is a subquiver of $\AnnulusQuiver$ whose vertices correspond to the distinct arcs of $T'$ crossed by $\gamma$ and whose arrows correspond to the distinct angles of $T'$ crossed by $\gamma$.  Let 
$\alpha_1, \dots, \alpha_{\ell+1}$ denote these arcs and let
$u_1,\dots,u_\ell$ denote these angles of $T'$; let $1,\dots, \ell+1$ denote the vertices in $\AnnulusQuiver$ which correspond to the $\alpha_i$. 
The fact that $\gamma$ does not cross itself and the fact that it doesn't cross any arc of $T$ twice tell us that $\Quiver_\gamma$ is of type $A_{\ell+1}$.

In Definition \ref{def:representation defined by a subquiver}, we define a module $M$ using a subquiver of a type $A$ quiver. We now define the module $M(\Quiver_\gamma)$ in similar manner: at each vertex of $\Quiver_\gamma$, we have one copy of the field; and at each arrow of $\Quiver_\gamma$, we have the identity map. The rest of the vertices and the arrows of the quiver $\AnnulusQuiver$ of the triangulated annulus are assigned the zero vector space and the zero map.

For each peripheral arc $\gamma$ which does not cross itself, we define $M(\gamma)$ to be the module $M(\Quiver_\gamma)$. 

Note that $M(\gamma)$ is a type of nicely-behaved module called a \emph{string module} \cite[Page 160]{BR87}. In general string modules are allowed to have simples appearing with multiplicity more than one, but $M(\gamma)$ has simples appearing with multiplicity at most one.

A string module arises from a sequence, or word (called a \emph{string})
whose letters are arrows or formal inverses of arrows of $\AnnulusQuiver$. 
In our case, our word corresponds to the angles of $T'$ crossed by $\gamma$: 
$u_1 u_2\cdots u_\ell$. 
We identify this word with the corresponding arrows and inverse arrows of $\Quiver_\gamma$; that is, we identify the string with the quiver $\Quiver_\gamma$.

The support of $M(\gamma)=M(\Quiver_\gamma)$ is precisely $\Quiver_\gamma$. Since $\Quiver_\gamma$ is 
a type $A_{\ell+1}$ quiver, we can apply the formulas for computing the number of submodules of a type $A$ sincere module to compute the number of submodules of $M(\gamma)$. 

\subsection{From a band module to the growth coefficient for the annulus}
\label{sec:quasi-simple band module}

We give an overview on how to associate a loop in the interior of our annulus to a certain quasi-simple band module whose number of submodules give us the growth coefficient of friezes arising from $T'$.

Let $\varepsilon$ denote a non-contractible closed curve in the annulus which has no self-crossings and is disjoint from the boundary; note that $\varepsilon$ is called a ``bracelet'' $\text{Brac}_1$ in \cite{MSW13, GMV19}.
The quiver $\Quiver_\varepsilon$ of the closed curve $\varepsilon$ is exactly the quiver $\AnnulusQuiver=\Quiver_{T'}$ 
of our triangulated annulus (cf. Remark~\ref{rem:shape-Q-T-annulus}). 
A cyclic string (called band) corresponding to $\varepsilon$ is the sequence we get by following $\Quiver_\varepsilon$, going counterclockwise around the annulus; because the sequence is cyclic, we can start at any vertex of $\Quiver_\varepsilon$, and for convenience we will start at the vertex of $\Quiver_\varepsilon$ associated to the arc $d_1$. 

To $\varepsilon$, we can associate a family of indecomposable modules  called band modules, \cite[Proposition 4.3]{ABCP10} (see also \cite[Theorem 1.1(2)]{BZ11}). From this family we pick a specific 
quasi-simple module which we  denote $M(\varepsilon)$; it has one copy of the field at each vertex of $\Quiver_\varepsilon$ and the identity map for every arrow in $\Quiver_\varepsilon$.

Let $\rho(M(\varepsilon))$ denote the number of submodules of $M(\varepsilon)$. The submodules of $M(\varepsilon)$ correspond to the down-closed subsets of $\Quiver_\varepsilon$, so $\rho(M(\varepsilon))$ is equal to the number of down-closed subsets of $\Quiver_\varepsilon$.

In \cite[Definitions 3.14, 3.16]{MSW13}, the authors associate to $\varepsilon$ and $T'$ a Laurent polynomial $x(\varepsilon)=x_{T'}(\varepsilon)$ with positive coefficients; let $f(\varepsilon)$ denote the number of terms in the Laurent polynomial $x(\varepsilon)$. 
In \cite[Theorem 5.7]{MSW13}, they show that $f(\varepsilon)$ is equal to the number of down-closed subsets of $Q_\varepsilon$. 
Therefore, $f(\varepsilon)=\rho(M(\varepsilon))$. This connection between the terms in $x(\varepsilon)$ and the submodules of $M(\varepsilon)$ is discussed in \cite[Remark 5.8, Figure 17]{MSW13} and \cite[Remark 11.7, Section 12]{GLS22}. 
(A much stronger result which connects $x(\varepsilon)$ and  $M(\varepsilon)$ via the CC-map is given in \cite[Section 11]{GLS22}.)

It was shown in \cite[Proposition 6.1]{GMV19} that the growth coefficient of the two friezes arising from $T'$ is given by $f(\varepsilon)$, the number of terms in the Laurent polynomial $x(\varepsilon)$. 
Therefore, we can conclude the following:

\begin{proposition}\label{prop:growth coefficient equals number of submodules of epsilon}
Let $T'$ be the triangulation of  the annulus $A$ constructed in Section~\ref{sec:D-to-A}.
Let $\quid$ be the quiddity sequence of the outer boundary. 
Then we have 
\begin{equation*}
s(\quid)=
\rho(M(\varepsilon)), 
\end{equation*}    
that is, the growth coefficient of the frieze determined by $\quid$ is equal to the number of submodules of $M(\varepsilon)$.  
\end{proposition}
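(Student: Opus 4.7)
The proof is essentially a bookkeeping exercise that strings together three facts already discussed in the preceding paragraphs of Section~\ref{sec:quasi-simple band module}, so my plan is simply to make that chain of equalities explicit.

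First, I would invoke \cite[Proposition 6.1]{GMV19}, which tells us that the growth coefficient of the frieze associated with the outer boundary quiddity sequence $\quid$ of $T'$ equals $f(\varepsilon)$, the number of monomial terms appearing in the MSW Laurent polynomial $x_{T'}(\varepsilon)$. This identifies $s(\quid)$ on the left-hand side with a purely combinatorial quantity on the annulus side, namely $f(\varepsilon) = s(\quid)$.

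Second, I would apply \cite[Theorem 5.7]{MSW13}, which expresses $f(\varepsilon)$ as the number of ``good'' matchings of a snake/band graph associated to $\varepsilon$, and under the dictionary recalled in the paragraph just above the proposition, those matchings are in bijection with the down-closed subsets of $\Quiver_\varepsilon$. Hence $f(\varepsilon)$ equals the number of down-closed subsets of the type $\widetilde{A}$ quiver $\Quiver_\varepsilon=\AnnulusQuiver$.

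Finally, I would explain the bijection between down-closed subsets of $\Quiver_\varepsilon$ and submodules of $M(\varepsilon)$. By construction $M(\varepsilon)$ is the quasi-simple band module whose underlying representation of $\AnnulusQuiver$ places one copy of the field at each vertex of $\Quiver_\varepsilon$ with identity maps along every arrow. Since every structure map is an isomorphism, a subrepresentation is determined by specifying, at each vertex, whether the one-dimensional space is included, subject to the condition that inclusion at a vertex $j$ forces inclusion at every $i$ with an arrow $j \to i$. This is precisely the defining condition of a down-closed subset of the Hasse diagram of $\Quiver_\varepsilon$, giving the bijection and hence $\rho(M(\varepsilon))$ equals the number of down-closed subsets of $\Quiver_\varepsilon$.

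Combining the three equalities gives $s(\quid) = f(\varepsilon) = \rho(M(\varepsilon))$, as desired. I do not anticipate any technical obstacle: the only subtlety worth spelling out is that the band module picked in Section~\ref{sec:quasi-simple band module} is the specific quasi-simple one (otherwise the submodule count would differ), and that the cyclic nature of $\Quiver_\varepsilon$ causes no problem since the subrepresentation condition only uses the local arrows around each vertex.
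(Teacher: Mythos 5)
Your proposal is correct and follows essentially the same route as the paper: the paper likewise chains \cite[Proposition 6.1]{GMV19} (growth coefficient equals $f(\varepsilon)$), \cite[Theorem 5.7]{MSW13} ($f(\varepsilon)$ equals the number of down-closed subsets of $\Quiver_\varepsilon$), and the identification of down-closed subsets with submodules of the quasi-simple band module $M(\varepsilon)$. The only difference is that you spell out the last bijection explicitly, which the paper asserts without elaboration; your verification of the subrepresentation condition is accurate.
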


In the next section we will use the results of \cite{Kan22} (on posets arising from quivers of affine type $A$) to compute $\rho(M(\varepsilon))$.

\subsection{Computing the growth coefficient for the annulus}\label{sec:outer-boundary}

In this section we compute the growth coefficient for the triangulation $T'$ of  
the annulus $A$ constructed in Section~\ref{sec:D-to-A}. 

In case \ref{notation:p-q:item:I}, we define $\gamma'$ to be the peripheral arc from $\punctureP$ to $\punctureQ$ following the inner boundary along $\beta'$. Similarly, let $\gamma''$ be the peripheral arc from $\punctureQ$ to $\punctureP$ following the inner boundary along $\beta''$. See Figure~\ref{fig:gammas-epsilon}. 

As in Section~\ref{sec:arc-gives-string}
, we use $\gamma'$ and $\gamma''$ to define strings 
which give rise to
representations $M(\gamma')$ and $M(\gamma'')$ for $\AnnulusQuiver$.

Recall that our triangulation $T'$ is constructed from a triangulation $T$ of a twice-punctured disk and that the arcs $\gamma'$ and $\gamma''$ both come from a common arc $\gamma$  in the twice-punctured disk: the arc connecting the two punctures (see section \ref{sec:arc-model}). 
Therefore by construction, the quivers of $\gamma'$ and $\gamma''$ are isomorphic as abstract directed graphs. Thus 
\[\rho(M(\gamma'))= \rho(M(\gamma'')).\]

\begin{notation}\label{notation:value-a}
In case \ref{notation:p-q:item:I}, let 
 $\gamma'$ and $\gamma''$ be as defined above, and we set $\ag=\rho(M(\gamma'))= \rho(M(\gamma''))$. 
\end{notation}

\begin{remark}\label{rem:a-is-1}
In cases \ref{notation:p-q:item:II} and \ref{notation:p-q:item:III}, there is an arc connecting the two punctures and so the analog of $\gamma'$ and $\gamma''$ in the annulus is homotopic to a segment of the inner boundary (or to the entire inner boundary). The curves therefore correspond to the $0$ module and we set $\ag=1$ in cases \ref{notation:p-q:item:II} and \ref{notation:p-q:item:III}.
\end{remark}

For the proofs in this section, we will use the following:

\begin{notation} 
\begin{enumerate}
    \item[(1)] If $\gamma$ is an  
arc or closed curve in the triangulated annulus, by abuse of notation we will write $\gamma$ to denote a word (string or band) corresponding to $\gamma$ and also to denote the string or band module $M(\gamma)$ corresponding to $\gamma$. For example, we will simply write $\nabla(\gamma)$ instead of $\nabla(M(\gamma))$ when writing the specialized rank matrices defined in Definition \ref{def:rm}.
We will also write $\gamma$ for the quiver $\Quiver_\gamma$ of $\gamma$.

\item[(2)] For $t\ge 0$, 
we write $U_t$ to denote an inverse segment of $t$ arrows and $D_t$ to denote a direct segment with $t$ arrows: 
\[\resizebox{13cm}{!}{$U_t=$\xymatrix{
      & & & t+1 \ar[dl] \\
   &   & t \ar@{.>}[dl]  &  \\
   &  2 \ar[dl]   &  & \\
   1 &&&  
   }\hspace{4cm}$D_t=$ \hspace{.5cm}
   \xymatrix{
 1 \ar[dr]      & & & \\
   & 2 \ar@{.>}[dr]  &   &  \\
   &            & t \ar[dr] & \\
   &&& t+1 }
   }\]
   \end{enumerate}
\end{notation}

\begin{lemma}\label{lm:Us-Ds}
Using Definition~\ref{def:rm}, for every $t\ge 0$, one has 
\[
\nabla(U_t)= 
\begin{pmatrix}
    t+2 & -1 \\ 1 & 0 
\end{pmatrix}, \quad 
\nabla(D_t)= 
\begin{pmatrix}
    t+2 & -t-1 \\ t+1 & -t
\end{pmatrix},
\]
\[
\Delta(U_t)= 
\begin{pmatrix}
    1 & t+1 \\ 0 & 1
\end{pmatrix}, 
\quad 
\Delta(D_t)= 
\begin{pmatrix}
    t+1 & 1 \\ t & 1 
\end{pmatrix}. 
\]
\end{lemma}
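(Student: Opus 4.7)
My plan is to prove the four identities by direct enumeration, exploiting the fact that both $U_t$ and $D_t$ are equioriented type $A$ quivers, so the submodule structure of the sincere indecomposable modules $M(U_t)$ and $M(D_t)$ is completely explicit. Since all maps in these modules are identity maps on one-dimensional spaces, a submodule is determined by a subset of vertices which is closed under the arrows. For $U_t$, whose arrows all point toward the leftmost vertex $1$, this means the support of a submodule is either empty or an ``initial segment'' $\{1,2,\dots,s\}$ for some $1\le s\le t+1$. For $D_t$, whose arrows all point toward the rightmost vertex $t+1$, the support is either empty or a ``final segment'' $\{s,s+1,\dots,t+1\}$. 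In either case there are exactly $t+2$ submodules, so $\mathfrak R(U_t)=\mathfrak R(D_t)=t+2$, which immediately gives the $(1,1)$-entries of both $\nabla$-matrices.

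Next, I would simply count the submodules satisfying each of the remaining conditions. For $U_t$, exactly one submodule contains the rightmost vertex (the full module $\{1,\dots,t+1\}$), exactly one avoids the leftmost vertex (the zero module), and none contains the rightmost while avoiding the leftmost; this fills in the rest of $\nabla(U_t)$. For $D_t$, there are $t+1$ submodules containing the rightmost vertex $t+1$ (one for each starting index $s\in\{1,\dots,t+1\}$), $t+1$ submodules not containing the leftmost vertex $1$ (the zero submodule together with $\{s,\dots,t+1\}$ for $s\ge 2$), and $t$ that contain the right vertex while avoiding the left (the nonempty final segments strictly excluding $1$); this fills in $\nabla(D_t)$.

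To obtain $\Delta(U_t)$ and $\Delta(D_t)$, I would then either rerun the count using Definition~\ref{def:rm}(2) --- noting that in $U_t$ exactly $t+1$ submodules miss the rightmost vertex and only the zero module misses both ends, while in $D_t$ exactly one submodule (the zero module) misses the rightmost vertex --- or, even more efficiently, apply Lemma~\ref{lm:nabla-delta} to convert from the already-computed $\nabla$-matrices by right multiplication with $\left(\begin{smallmatrix}0&1\\-1&1\end{smallmatrix}\right)$.

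There is essentially no hard step: the only thing requiring any care is keeping the ``leftmost/rightmost'' conventions straight, which is fixed once we observe from the diagram that vertex $1$ is leftmost and vertex $t+1$ is rightmost in both $U_t$ and $D_t$. An alternative, more structural proof would be a one-line induction on $t$ using Proposition~\ref{prop:order}: decompose $D_t=\{\bullet\}\searrow D_{t-1}$ and multiply by the base case $\nabla$ of a single vertex, and similarly for $U_t$ via $\swarrow$. Both approaches are routine, so I would include the direct enumeration for clarity.
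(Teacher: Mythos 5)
Your proposal is correct: the paper states this lemma without proof, treating it as an immediate computation from Definition~\ref{def:rm}, and your direct enumeration (submodule supports of $M(U_t)$ are the initial segments $\{1,\dots,s\}$, those of $M(D_t)$ the final segments $\{s,\dots,t+1\}$, giving the stated counts) is exactly that routine verification. Obtaining $\Delta(U_t)$ and $\Delta(D_t)$ either by recounting or by right-multiplying the $\nabla$-matrices as in Lemma~\ref{lm:nabla-delta} both check out, so nothing is missing.
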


\begin{corollary}\label{cor:det-1}
For any string module $M$, we have 
\[
\det\nabla(M)=1 \mbox{ and }
\det\Delta(M)=1.
\]
\end{corollary}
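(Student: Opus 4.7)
The claim asserts $\det\nabla(M)=\det\Delta(M)=1$ for every string module $M$, and my plan is to prove this by induction on the number of arrows of the quiver $\Quiver_M$, using the multiplicative decompositions of Proposition~\ref{prop:order} for the inductive step.

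Before starting the induction, I would observe that both conversion matrices appearing in Lemma~\ref{lm:nabla-delta}, namely $\left(\begin{smallmatrix}0 & 1 \\ -1 & 1\end{smallmatrix}\right)$ and $\left(\begin{smallmatrix}1 & -1 \\ 1 & 0\end{smallmatrix}\right)$, have determinant $1$. Consequently $\det\nabla(M)=\det\Delta(M)$ holds for every string module $M$, so it suffices to establish one of the two identities.

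For the base case of the induction, $\Quiver_M$ is a single vertex, $M$ is simple, and this is the special case $U_0=D_0$ of Lemma~\ref{lm:Us-Ds}, which directly gives $\det\nabla(M)=1$. For the inductive step, pick any arrow $a$ of $\Quiver_M$ and use it to split $\Quiver_M$ into two connected subquivers. Depending on the orientation of $a$ we obtain either $\Quiver_M=\Quiver_{(1)}\searrow\Quiver_{(2)}$ or $\Quiver_M=\Quiver_{(3)}\swarrow\Quiver_{(4)}$. In the first case Proposition~\ref{prop:order}(1) yields $\nabla(M)=\nabla(M_1)\nabla(M_2)$, and by the inductive hypothesis each factor has determinant $1$, so multiplicativity of the determinant gives $\det\nabla(M)=1$. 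In the second case Proposition~\ref{prop:order}(2) yields $\Delta(M)=\Delta(M_3)\Delta(M_4)$; the inductive hypothesis applied to $M_3$ and $M_4$ combined with the equality $\det\nabla=\det\Delta$ established above shows that both $\det\Delta(M_3)$ and $\det\Delta(M_4)$ equal $1$, so $\det\Delta(M)=1$, and hence $\det\nabla(M)=1$ once more via Lemma~\ref{lm:nabla-delta}.

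The result is a formal consequence of the tools already assembled in this subsection, so no serious obstacle is expected; the only care required is to treat both possible orientations of the chosen arrow (one triggering the $\nabla$-factorization, the other the $\Delta$-factorization) and to invoke Lemma~\ref{lm:nabla-delta} to bridge between them.
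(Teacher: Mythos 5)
Your proof is correct and rests on the same two ingredients as the paper's own argument, namely the multiplicativity of the rank matrices from Proposition~\ref{prop:order} together with determinant-one building blocks; the paper simply cites the determinants of all the matrices $\nabla(U_t)$, $\nabla(D_t)$, $\Delta(U_t)$, $\Delta(D_t)$ from Lemma~\ref{lm:Us-Ds} and decomposes the string into such segments, whereas you run an induction splitting at an arbitrary arrow and bridge the two orientations via the determinant-one conversion matrices of Lemma~\ref{lm:nabla-delta}. This repackaging is sound (and only needs the single-vertex case of Lemma~\ref{lm:Us-Ds}), but it is essentially the same proof.
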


\begin{proof}
The determinant of each of the matrices $\nabla(U_t)$, $\nabla(D_t)$, $\Delta(U_t)$ and $\Delta(D_t)$ is $1$ and by Proposition~\ref{prop:order} we get the desired result.
\end{proof}

The inverse of a string $\beta$, denoted $\beta^{-1}$, is obtained by reading $\beta$ from right to left; inverse segments are turned into direct segments and vice versa. The inverse of a type $A$ quiver is defined in a similar manner.
\begin{lemma}\label{lm:nabla-inverse} 
Let $\beta$ be a peripheral arc such that $M(\beta)$ is a rigid indecomposable representation (of the quiver $\AnnulusQuiver$ of the triangulation $T'$). 

Let $\alpha=\beta^{-1}$, and 
let $a,b,c,d\ge 0$ be such that 
$\nabla(\beta)=
\begin{pmatrix} \mathfrak{R}(\beta) & -\mathfrak{R}_1(\beta) \\ {_0}\mathfrak{R}(\beta) & -{_0}\mathfrak{R}_1(\beta) \end{pmatrix}
 =
 \begin{pmatrix}
a & -b \\ c & -d 
\end{pmatrix}$.

\noindent
Then we have 
\[
\nabla(\alpha)  =  
\begin{pmatrix}
a & c-a \\ a-b & c+b-a-d \\
\end{pmatrix} 
\]

\end{lemma}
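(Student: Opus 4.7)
The plan is to observe that $M(\alpha)$ and $M(\beta)$ are the \emph{same} representation of $\AnnulusQuiver$ up to relabeling the ends of the underlying string: since $\alpha=\beta^{-1}$ is obtained by reading $\beta$ from right to left, the only effect on the specialized rank matrix is that the roles of the leftmost and rightmost vertex in the support quiver are interchanged. All four entries of $\nabla(\alpha)$ can therefore be expressed in terms of counts relative to the endpoints of $\beta$, and the identities $a,b,c,d$ refer to, and the lemma becomes a short inclusion-exclusion computation.

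Concretely, I would carry out the following four equalities. First, $\mathfrak{R}(\alpha)=\mathfrak{R}(\beta)=a$, since the set of submodules does not depend on how we label the ends of the string. Second, $\mathfrak{R}_1(\alpha)$ counts submodules containing the rightmost vertex of $\alpha$, which is the leftmost vertex of $\beta$; by complementation this is $\mathfrak{R}(\beta)-{_0}\mathfrak{R}(\beta)=a-c$, so $-\mathfrak{R}_1(\alpha)=c-a$. Third, ${_0}\mathfrak{R}(\alpha)$ counts submodules not containing the leftmost vertex of $\alpha$, i.e.\ not containing the rightmost vertex of $\beta$, which equals $\mathfrak{R}(\beta)-\mathfrak{R}_1(\beta)=a-b$. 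Finally, ${_0}\mathfrak{R}_1(\alpha)$ counts submodules of $M(\beta)$ that contain the leftmost vertex of $\beta$ but not the rightmost; writing $L$ (resp.\ $R$) for the set of submodules containing the leftmost (resp.\ rightmost) vertex of $\beta$, this equals $|L|-|L\cap R|=(a-c)-|L\cap R|$, and $|L\cap R|=\mathfrak{R}_1(\beta)-{_0}\mathfrak{R}_1(\beta)=b-d$, giving ${_0}\mathfrak{R}_1(\alpha)=a-c-b+d$, so $-{_0}\mathfrak{R}_1(\alpha)=c+b-a-d$. Assembling these four entries produces exactly the matrix in the statement.

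The rigidity hypothesis on $M(\beta)$ is used only insofar as it guarantees that the peripheral arc $\beta$ does not cross itself and crosses each arc of $T'$ at most once, so that $\Quiver_\beta$ is of type $A$ and $M(\beta)$ is an indecomposable string (not band) module to which the framework of Section~\ref{sec:order-ideal} and Definition~\ref{def:rm} applies. I do not foresee a real obstacle; the only thing to be careful about is the convention for the ends of the string when passing from the arc to the specialized rank matrix, and in particular that the bijection between submodules of $M(\alpha)$ and submodules of $M(\beta)$ sends ``containing the rightmost of $\alpha$'' to ``containing the leftmost of $\beta$'' and vice versa. Once this identification is made, the four identities above are purely combinatorial and independent of the orientation of the quiver, so no case analysis on direct versus inverse segments is necessary.
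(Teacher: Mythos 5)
Your proof is correct and follows essentially the same route as the paper's: both arguments identify the submodule lattices of $M(\alpha)$ and $M(\beta)$ and note that inverting the string swaps the roles of the leftmost and rightmost vertices, then compute the four entries by complementation/inclusion--exclusion (your $|L|-|L\cap R|$ computation is exactly the paper's ${_1}\mathfrak{R}_0(\beta)={_1}\mathfrak{R}(\beta)-{_1}\mathfrak{R}_1(\beta)$). No gaps.
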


\begin{proof}
By Definition~\ref{def:rm}, 
\begin{equation*}
\nabla(\alpha) = \begin{pmatrix} \mathfrak{R}(\alpha) & -\mathfrak{R}_1(\alpha) \\ {_0}\mathfrak{R}(\alpha) & -{_0}\mathfrak{R}_1(\alpha) \end{pmatrix}.
\end{equation*}
Now 
\begin{align*}
\mathfrak{R}(\alpha) & =\mathfrak{R}(\beta) = a, \\
\mathfrak{R}_1(\alpha) & = {_1}\mathfrak{R}(\beta) = \mathfrak{R}(\beta) - {_0}\mathfrak{R}(\beta) = a-c, \\
_0\mathfrak{R}(\alpha) & = \mathfrak{R}_0(\beta) = \mathfrak{R}(\beta) - \mathfrak{R}_1(\beta) = a-b,\\
{_0}\mathfrak{R}_1(\alpha) & =  {_1}\mathfrak{R}_0(\beta) = {_1}\mathfrak{R}(\beta) - {_1}\mathfrak{R}_1(\beta) = 
\left(\mathfrak{R}(\beta) - {_0}\mathfrak{R}(\beta) \right)
- \left( \mathfrak{R}_1(\beta) - {_0}\mathfrak{R}_1(\beta)\right)\\
 & = a -c -b +d, 
\end{align*}
\end{proof}

Recall the definitions of $p$ and $q$ from Notation~\ref{notation:p-q}. In case \ref{notation:p-q:item:I}, there are $p$ arcs incident with the puncture $\punctureP$ and $q$ arcs incident with the puncture $\punctureQ$.

\begin{proposition}\label{prop:growth-I} 
Let $T'$ be the triangulation of  the annulus $A$ constructed in Section~\ref{sec:D-to-A}.
Let $\quid$ be the quiddity sequence of the outer boundary. 
Then the growth coefficient of the frieze determined by the outer boundary of this triangulated annulus is 
\begin{equation*}
s(\quid)=
\ag^2pq-2, 
\end{equation*}
where $\ag=\rho(M(\gamma'))=\rho(M(\gamma''))$ as in Notation~\ref{notation:value-a}.
\end{proposition}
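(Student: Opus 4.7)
The plan is to compute $\rho(M(\varepsilon))$, the number of submodules of the quasi-simple band module associated to the non-contractible closed curve $\varepsilon$, and then invoke Proposition~\ref{prop:growth coefficient equals number of submodules of epsilon}. As $\varepsilon$ traverses the annulus once, its cyclic word splits into four consecutive substrings: the fan $F_P$ crossing the $p$ arcs at the former puncture $\punctureP$ (an equioriented type $A$ string of length $p-1$); the peripheral string $\gamma'$; the fan $F_Q$ crossing the $q$ arcs at $\punctureQ$ (equioriented, length $q-1$); and the peripheral string $\gamma''$. By construction the quivers of $\gamma'$ and $\gamma''$ are isomorphic as abstract directed graphs, being mirror images on the two sides of the inner boundary. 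In cases~\ref{notation:p-q:item:II} and~\ref{notation:p-q:item:III}, the strings $\gamma'$ and $\gamma''$ are trivial (cf.\ Remark~\ref{rem:a-is-1}) and each fan is shortened by one because the connecting arc is no longer present in $T'$.

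Next, I would establish a cyclic trace formula for the quasi-simple band module: if $\varepsilon = \omega_1\omega_2\cdots\omega_k$ is a cyclic concatenation of strings with compatible arrow-gluings, then
\[\rho(M(\varepsilon)) = \tr\bigl(\nabla(\omega_1)\nabla(\omega_2)\cdots\nabla(\omega_k)\bigr).\]
This extends the multiplicative property of Proposition~\ref{prop:order} from linear gluings to cyclic gluings. The heuristic reason is that the four entries of each $\nabla(\omega_i)$ encode the four possible boundary conditions (inclusion or exclusion of the leftmost and rightmost vertex) of a submodule of $M(\omega_i)$; linear gluing corresponds to matrix multiplication by Proposition~\ref{prop:order}, and cyclic gluing identifies the boundary conditions of the first and last factors, so the trace correctly counts compatible cyclic submodules.

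Finally, I would compute the trace. By Lemma~\ref{lm:Us-Ds}, in case~\ref{notation:p-q:item:I} the fan matrices are $\nabla(F_P) = \bigl(\begin{smallmatrix} p+1 & -1 \\ 1 & 0\end{smallmatrix}\bigr)$ and $\nabla(F_Q) = \bigl(\begin{smallmatrix} q+1 & -1 \\ 1 & 0\end{smallmatrix}\bigr)$, with analogous matrices having $(1,1)$-entries $p$ and $q$ in cases~\ref{notation:p-q:item:II} and~\ref{notation:p-q:item:III}. Writing $\nabla(\gamma') = \bigl(\begin{smallmatrix}\ag & -b \\ c & -d\end{smallmatrix}\bigr)$ with $bc-\ag d = 1$ (Corollary~\ref{cor:det-1}), and using Lemma~\ref{lm:nabla-inverse} to express $\nabla(\gamma'')$ as the rank matrix of the reversed string, a direct computation---in which the determinant-1 relation makes intermediate terms cancel---gives
\[\tr\bigl(\nabla(F_P)\nabla(\gamma')\nabla(F_Q)\nabla(\gamma'')\bigr) = \ag^2 pq - 2,\]
and cases~\ref{notation:p-q:item:II} and~\ref{notation:p-q:item:III} are recovered with $\ag = 1$ and identity matrices for $\nabla(\gamma')$ and $\nabla(\gamma'')$. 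The chief difficulty lies in the middle step: the results of~\cite{Kan22} concern strings, not bands, so careful bookkeeping is required to justify the trace formula and fix orientation conventions at each gluing. Once this is settled the final matrix calculation is mechanical, and the $-2$ in the answer arises intrinsically from the trace of the $2\times 2$ product rather than as an extra correction.
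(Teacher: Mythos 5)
Your proposal is correct and follows essentially the same route as the paper: reduce to counting submodules of the band module $M(\varepsilon)$ via Proposition~\ref{prop:growth coefficient equals number of submodules of epsilon}, decompose the cyclic word as $U_{p-1}\searrow\gamma'\searrow U_{q-1}\searrow\gamma''$, and take the trace of the product of specialized rank matrices using Lemma~\ref{lm:Us-Ds}, Lemma~\ref{lm:nabla-inverse} and Corollary~\ref{cor:det-1}, with $a=1$ in cases~\ref{notation:p-q:item:II} and~\ref{notation:p-q:item:III}. The step you single out as the chief difficulty --- the cyclic trace formula for bands --- does not need to be re-derived, since it is exactly \cite[Prop.\ 3.2(9)]{Kan22} ($\rho(\circlearrowright\beta)=\tr\nabla(\beta)$), which is what the paper invokes; only note that case~\ref{notation:p-q:item:III} is not a literal $q=1$ substitution into the case~\ref{notation:p-q:item:II} product, because there the fan at $\punctureQ$ disappears and the word collapses to $\circlearrowright U_{p-4}$, whose trace $p-2$ still equals $pq-2$.
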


\begin{proof}

Due to Proposition \ref{prop:growth coefficient equals number of submodules of epsilon}, it suffices to show that $\rho(M(\varepsilon))=\ag^2pq-2$.

In case \ref{notation:p-q:item:I}, the triangulation $T'$ and $\varepsilon$ are as illustrated in Figure~\ref{fig:gammas-epsilon}.

\begin{figure}[htbp!]
    \centering
    \includegraphics[scale=.6]{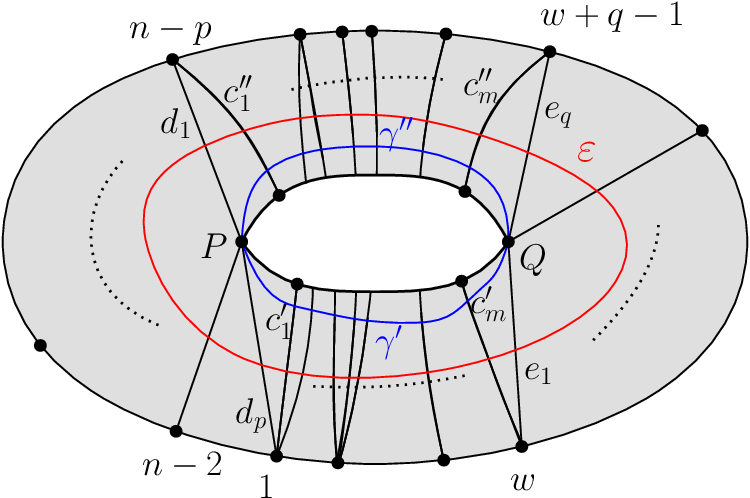}
    \caption{The arcs $\gamma'$, $\gamma''$ and the loop $\varepsilon$ in case \ref{notation:p-q:item:I}.}
    \label{fig:gammas-epsilon}
\end{figure}
So reading $\varepsilon$ as a string starting from $d_1$, going counter clockwise and ending again at $d_1$ we have the following 
quiver $\Quiver_\varepsilon$ ($*$ means that you glue the left to the right end): 
\[ 
* \underbrace{U_{p-1} \searrow \gamma' \searrow U_{q-1} \searrow \gamma''}_{=:\beta}\searrow *
\]

In case \ref{notation:p-q:item:II} (recalling that $p,q\ge 3$), we have the following picture: 
\[
\includegraphics[scale=.6]{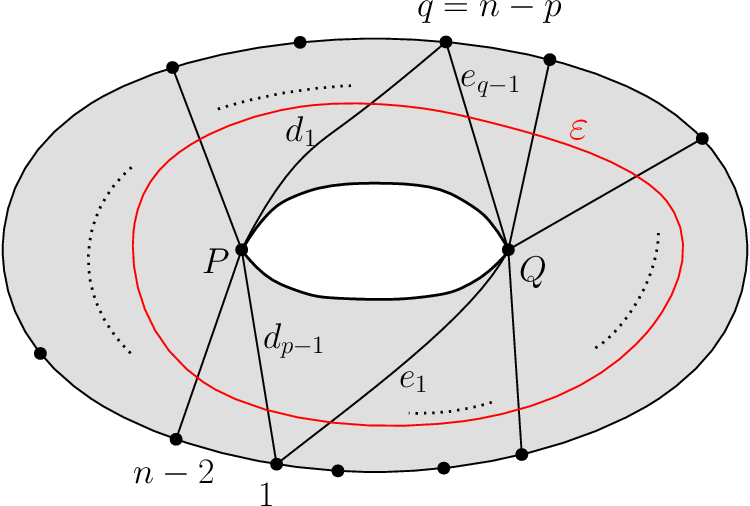}
\]
and so, reading $\varepsilon$ counterclockwise from $d_1$ as before, we have 
\[ 
* \underbrace{U_{p-2}\searrow U_{q-2}}_{=:\beta}\searrow *
\]

Finally, case \ref{notation:p-q:item:III} (with $p\ge 6$) is as follows 
\[\includegraphics[scale=.6]{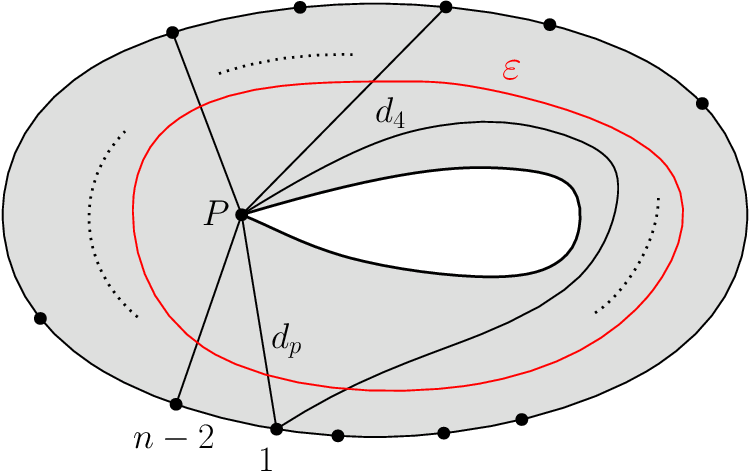}
\]
with $\varepsilon$ (reading counterclockwise, starting with $d_4$) 
\[ 
* \underbrace{U_{p-4} }_{\beta} \searrow *
\]

Let $\beta$ denote the (non-cyclic) string which is the subsequence of $\varepsilon$ obtained by reading $\varepsilon$ (that is, the quiver $\Quiver_\varepsilon$) clockwise starting from $d_1$ (or from $d_4$ in case \ref{notation:p-q:item:III}), reading all the arrows and inverse arrows of $\Quiver_\varepsilon$ except the last arrow $\searrow$. 
The quivers for $\beta$ and for $\varepsilon$ have the same vertices but they differ by one arrow. 
Following the notation of \cite[Section 3]{Kan22}, this 
means we can write $\varepsilon=\circlearrowright\beta$.  The circular symbol $\circlearrowright$ is used 
to take 
a type $A$ quiver and connect the two degree-one vertices using a new arrow to form  
a type affine $A$ quiver; in our scenario, we take $\beta$ and form $\varepsilon$ by adding a new arrow to $\beta$.  
The number $\rho(\circlearrowright\beta)$ is equal to the trace of $\nabla(\beta)$, by~\cite[Prop 3.2 (9)]{Kan22}.
We compute this trace case by case. 
Recall that $\ag=\rho(M(\gamma'))=\rho(M(\gamma''))$.

\begin{itemize}
\setlength{\itemindent}{0em}
    \item Case \ref{notation:p-q:item:I}: Let $\nabla(\gamma')=\begin{pmatrix}
\ag & -b \\ c & -d 
\end{pmatrix}$ with $\ag,b,c,d\ge 0$. 
As abstract directed graphs,  the quiver $\Quiver_{\gamma''}$ of $\gamma''$ is the inverse of the quiver $\Quiver_{\gamma'}$  of $\gamma'$, so 
we have 
$\nabla(\gamma'')= 
\begin{pmatrix}
\ag & c-\ag \\ \ag-b & c+b-\ag-d 
\end{pmatrix}$ by Lemma~\ref{lm:nabla-inverse}. 
Then 
\begin{eqnarray*}
\rho(\circlearrowright\beta) & = & \tr\left( \nabla(\beta) \right) 
 = 
\tr\left(\nabla(U_{p-1}\searrow \gamma' \searrow U_{q-1} \searrow \gamma'') \right)
\\
 & = & 
\tr\left(\nabla(U_{p-1})\nabla(\gamma')\nabla(U_{q-1})\nabla(\gamma'') \right) 
\text{ by Proposition \ref{prop:order}\ref{prop:order:itm1} }\\ 
 & = & 
\tr(\begin{pmatrix}
p+1 & -1 \\ 1 & 0 
\end{pmatrix}
\begin{pmatrix}
\ag & -b \\ c & -d     
\end{pmatrix}
\begin{pmatrix}
q+1 & -1 \\ 1 & 0     
\end{pmatrix} 
\begin{pmatrix}
\ag & c-\ag \\ \ag-b & c-\ag+b-d
\end{pmatrix} ) 
\text{ by Lemma \ref{lm:Us-Ds}}
\\
 & = & \tr(\begin{pmatrix}\ag^2pq + \ag^2q - \ag cq - bc + \ag d & * \\ 
 * & -\ag^2q + \ag cq - bc + \ag d
\end{pmatrix} ) \\
 & = & \ag^2pq -2(bc-\ag d) 
 \\
 & = &
 \ag^2pq-2(1) \text{ by Corollary \ref{cor:det-1}}
\end{eqnarray*}

\item Case \ref{notation:p-q:item:II}: We have $\ag=1$ (see Remark~\ref{rem:a-is-1}) and 
\begin{eqnarray*}
\rho(\circlearrowright\beta) & = & \tr(\nabla(\beta)) = 
\tr(\nabla(U_{p-2}\searrow  U_{q-2} ) )\\
 & = & 
\tr(\nabla(U_{p-2})\nabla(U_{q-2})) \text{ by Proposition \ref{prop:order}\ref{prop:order:itm1} } \\ 
 & = & 
\tr(\begin{pmatrix}
p & -1 \\ 1 & 0 
\end{pmatrix}
\begin{pmatrix}
q & -1 \\ 1 & 0     
\end{pmatrix}) \text{ by Lemma \ref{lm:Us-Ds}}
 \\
 & = & \tr(\begin{pmatrix}pq -1  & * \\ 
 * & -1
\end{pmatrix}) \\
 & = & pq -2 
\end{eqnarray*}

\item Case \ref{notation:p-q:item:III}: We again have $\ag=1$, with $q=1$. We get 
\begin{eqnarray*}
\rho(\circlearrowright\beta) & = & \tr(\nabla(\beta)) = 
\tr(\nabla(U_{p-4}))\\
 & = & 
\tr(\begin{pmatrix}
p-2 & -1 \\ 1 & 0 
\end{pmatrix})
 \\
 & = & p -2 
\end{eqnarray*}
We remark that in case \ref{notation:p-q:item:III}, 
$\rho(\circlearrowright\beta)= 
\rho(U_{p-4})=\rho(\beta)$. 
\end{itemize}
\end{proof}

%
\section{All tubes have the same growth coefficient}
\label{sec:all-tubes}

\subsection{Growth coefficient of the two tubes $\mathcal{T}_2$ and $\mathcal{T}_3$}\label{sec:small-tubes}

In this section, we compute the quiddity sequences of the friezes corresponding to the two rank $2$ tubes arising from the triangulated twice-punctured disk. 
We refer to them as the two ``small tubes'' in this section
(note that, if $n=4$, the tube associated to the boundary of the disk is also of rank $2$, so all three tubes have the same size). 

As before, 
$T'$ is the triangulation of the annulus constructed from the triangulation $T$ of the twice-punctured disk (Section~\ref{sec:D-to-A}), 
$p$ is the number arcs in a small neighborhood of $\punctureP$ and $q$ the number of arcs in a small neighborhood of $\punctureQ$ (Notation~\ref{notation:p-q}).

Let $\gamma$ be the tagged arc connecting $\punctureP$ and $\punctureQ$ which is unnotched at either end and let $\gamma^{(\punctureP)}$, 
$\gamma^{(\punctureQ)}$ and $\gamma^{(\punctureP\punctureQ)}$ be notched at $\punctureP$, at $\punctureQ$ and at both ends, respectively (as in the first two pictures in Figure~\ref{fig:arcs-at-mouths}). 

As before, we let $a=\rho(M(\gamma'))=\rho(M(\gamma''))$. 
Recall that if $T$ contains an arc connecting $\punctureP$ and $\punctureQ$, we have $a=1$ (Remark~\ref{rem:a-is-1}).

To compute the values $\rho(M(\gamma))$, $\rho(M(\gamma^{(\punctureP)}))$, 
$\rho(M(\gamma^{(\punctureQ)}))$ and 
$\rho(M(\gamma^{(\punctureP\punctureQ)}))$ we use a result about the cluster algebra corresponding to the triangulation $T$. 

\begin{proposition}\label{prop:value-from-cl-alg}
Let $\mathcal{A}(S,T)$ be the cluster algebra associated to $(S,T)$ (\cite{FST08}). For $\mu$ a tagged arc on $(S,T)$, let $x(\mu)$ be the cluster variable associated to the arc $\mu$, and let $f(\mu)$ be the number of terms of $x(\mu)$ in the cluster variable expansion of $x(\mu)$ with respect to $T$. 
\begin{enumerate}[(a)]
    \item\label{prop:value-from-cl-alg:itm:a} 
    Then we have 
\[
f(\gamma^{(\punctureP)}) =p \cdot f(\gamma) 
\quad \mbox{and} \quad 
f(\gamma^{(\punctureP\punctureQ)}) = pq\cdot 
f(\gamma).
\]

\item\label{prop:value-from-cl-alg:itm:b} 
For $\beta\in\{\gamma,\gamma^{(\punctureP)}, \gamma^{(\punctureQ)}, \gamma^{(\punctureP\punctureQ)}\}$, we have 
\[
\rho(M(\beta)) = f(\beta)
\]
\end{enumerate}
\end{proposition}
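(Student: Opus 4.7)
The proof naturally splits into two parts: part (b) provides the dictionary between the cluster-algebra quantity $f$ and the representation-theoretic quantity $\rho$, while part (a) is an application of the perfect-matching formulas of~\cite{MSW13} to count terms of cluster variables for notched arcs.

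For part (b), the plan is to combine the Caldero--Chapoton identification $x(\beta)=\cc(M(\beta))$ with two ingredients already established in the paper. First, the argument recalled just before equation~\eqref{eq:CC} shows that $\cc(M(\beta))|_{x_i=1}$ equals $\rho(M(\beta))$, since the specialization counts submodules with multiplicity. Second, for each of the four choices of $\beta$, the module $M(\beta)$ is a string module, and the MSW expansion (as used in Section~\ref{sec:quasi-simple band module} for the bracelet $\varepsilon$; cf.~\cite[Remark~5.8]{MSW13}) writes $x(\beta)$ as a sum of $f(\beta)$ distinct Laurent monomials with coefficient $1$, one per submodule of $M(\beta)$. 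Setting $x_i=1$ in this expansion collapses the right-hand side to $f(\beta)$, so $\rho(M(\beta))=f(\beta)$.

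For part (a), I would apply the snake/band graph expansions of~\cite{MSW13} to $\gamma^{(\punctureP)}$ and $\gamma^{(\punctureP\punctureQ)}$. The graph $G_{\gamma^{(\punctureP)}}$ associated to the notched arc $\gamma^{(\punctureP)}$ is obtained from the snake graph $G_\gamma$ of $\gamma$ by attaching, at the end corresponding to $\punctureP$, a \emph{loop piece} whose perfect matchings encode the local combinatorics of $T$ near $\punctureP$. This loop piece admits exactly $p$ perfect matchings, one for each of the $p$ arcs of $T$ incident to $\punctureP$, and its matchings combine independently with those of $G_\gamma$. The total matching count of $G_{\gamma^{(\punctureP)}}$ therefore factors as $p\cdot f(\gamma)$, giving $f(\gamma^{(\punctureP)})=p\cdot f(\gamma)$. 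Attaching loop pieces at both ends simultaneously produces $G_{\gamma^{(\punctureP\punctureQ)}}$ with two independent loop contributions, yielding $f(\gamma^{(\punctureP\punctureQ)})=pq\cdot f(\gamma)$.

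The main obstacle I anticipate is careful bookkeeping of the loop-piece construction across the three configurations of Notation~\ref{notation:p-q}, rather than any deep conceptual issue. In particular Case~\ref{notation:p-q:item:III}, where $T$ contains a self-folded triangle whose radius is $\gamma$ and $\punctureQ$ has effectively only one arc attached, is degenerate: the loop piece at $\punctureQ$ collapses and one has to verify directly that the multiplicative statement still holds with the trivial factor $q=1$. A minor secondary point is to confirm that for each of the four $\beta$'s the MSW expansion really is term-wise multiplicity-free (no distinct submodules give rise to the same Laurent monomial), which is what allows the passage between $f$ and $\rho$ in part (b).
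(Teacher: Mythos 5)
Your part (b) is essentially the paper's argument: identify $x(\beta)$ with $\cc(M(\beta))$ and use the fact that the specialization of the CC-map counts submodules. One point you should make explicit (the paper does) is \emph{why} $\cc(M(\beta))$ is a cluster variable at all: the four tagged arcs correspond to rigid indecomposable objects (Lemma~\ref{lm:arcs at the mouth of small tubes}), and rigid objects are sent to cluster variables by the CC-map~\cite{P08}. Your worry about multiplicity-freeness is unnecessary: both $f(\beta)$ and $\rho(M(\beta))$ are counts with multiplicity (sum of coefficients, resp.\ weighted submodule count), so the equality follows from specializing $x_i=1$ without knowing the expansion is multiplicity-free.

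Part (a) is where your argument has a genuine gap. The paper simply cites \cite[Proposition~5.3]{MSW11}; you instead try to re-derive the identity by claiming that the ``graph of $\gamma^{(\punctureP)}$'' is the snake graph of $\gamma$ with a ``loop piece'' attached at the $\punctureP$-end, having exactly $p$ perfect matchings that combine independently with those of $G_\gamma$. No such construction is established in \cite{MSW11,MSW13}: notched arcs are \emph{not} assigned a modified snake graph there. The actual mechanism goes through the ordinary loop $\ell_{\punctureP}$ forming a self-folded triangle with $\gamma$, the exchange relation $x_{\ell_{\punctureP}}=x_\gamma\, x_{\gamma^{(\punctureP)}}$, and a count of $\gamma$-symmetric matchings of $G_{\ell_{\punctureP}}$; the multiplicativity $f(\gamma^{(\punctureP)})=p\cdot f(\gamma)$ is precisely the content of the cited proposition, not something that falls out of an ``independent loop factor'' by inspection. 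The doubly-notched case $\gamma^{(\punctureP\punctureQ)}$ is the most delicate part of the MSW theory (it requires the symmetric-matching analysis at both ends simultaneously), so asserting ``two independent loop contributions'' assumes exactly what needs to be proved. Either cite \cite[Proposition~5.3]{MSW11} as the paper does, or supply the $\gamma$-symmetric matching argument in full.
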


\begin{proof}
Part \ref{prop:value-from-cl-alg:itm:a}  follows  from~\cite[Proposition 5.3]{MSW11}. 
For part 
\ref{prop:value-from-cl-alg:itm:b} we note that all these tagged arcs correspond to rigid indecomposable objects (Lemma~\ref{lm:arcs at the mouth of small tubes}). They are sent to cluster variables by the CC-map,~\cite{P08}.
\end{proof}

Now we have everything together to compute the quiddity sequences of the small tubes: 

\begin{lemma}
\label{lm:quid-small-tubes}
Let $a$, $p$, and $q$ be as defined above. 
One of the small tubes has quiddity sequence $(a,apq)$, and the other $(ap, aq)$.
\end{lemma}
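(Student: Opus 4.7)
The plan proceeds in three stages: identify the quasi-simples of each rank-$2$ tube via Lemma~\ref{lm:arcs at the mouth of small tubes}, compute $\rho(M(\beta))$ for the four candidate tagged arcs using Proposition~\ref{prop:value-from-cl-alg}, and assemble the quiddity sequences.

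By Lemma~\ref{lm:arcs at the mouth of small tubes}, $\tau$ exchanges $\gamma$ with $\gamma^{(\punctureP\punctureQ)}$ and exchanges $\gamma^{(\punctureP)}$ with $\gamma^{(\punctureQ)}$. Since a rank-$2$ tube contains exactly one $\tau$-orbit of quasi-simples and that orbit has size $2$, one of the small tubes (say $\mathcal{T}_2$) has quasi-simples $\{\gamma,\gamma^{(\punctureP\punctureQ)}\}$ while the other ($\mathcal{T}_3$) has quasi-simples $\{\gamma^{(\punctureP)},\gamma^{(\punctureQ)}\}$. The quiddity sequence of each tube is then the pair of $\rho$-values of its two quasi-simples.

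The key computational step is to show that $\rho(M(\gamma))=\ag$. In case~\ref{notation:p-q:item:I}, the triangulation $T$ contains no arc between the two punctures, and the arc $\gamma$ in $S$ crosses the arcs $c_1,\dots,c_m$ of $T$ at a specific sequence of angles. By the construction of Section~\ref{sec:D-to-A}, the peripheral arc $\gamma'$ in the annulus $A(S)$ crosses the corresponding arcs $c_1',\dots,c_m'$ of $T'$ at matching angles, so the quivers $\Quiver_\gamma$ and $\Quiver_{\gamma'}$ agree as abstract directed graphs and therefore the string modules $M(\gamma)$ and $M(\gamma')$ have the same number of submodules. In cases~\ref{notation:p-q:item:II} and~\ref{notation:p-q:item:III}, the unnotched arc $\gamma$ is already contained in $T$ (as the arc joining the two punctures, or as the radius of the self-folded triangle), so $M(\gamma)=0$ and $\rho(M(\gamma))=1=\ag$ by Remark~\ref{rem:a-is-1}.

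Combining this with Proposition~\ref{prop:value-from-cl-alg}\ref{prop:value-from-cl-alg:itm:b} and part~\ref{prop:value-from-cl-alg:itm:a} (together with its evident analog obtained by swapping the roles of $\punctureP$ and $\punctureQ$, giving $f(\gamma^{(\punctureQ)})=q\cdot f(\gamma)$) yields
\[
\rho(M(\gamma^{(\punctureP)}))=\ag p,\qquad \rho(M(\gamma^{(\punctureQ)}))=\ag q,\qquad \rho(M(\gamma^{(\punctureP\punctureQ)}))=\ag pq.
\]
Pairing the $\rho$-values according to the $\tau$-orbits identified in the first paragraph gives the quiddity sequence $(\ag,\ag pq)$ for one small tube and $(\ag p,\ag q)$ for the other, as claimed. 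The main obstacle is the identification $\rho(M(\gamma))=\ag$ in case~\ref{notation:p-q:item:I}, which requires a careful comparison of the strings read off from $\gamma$ in $S$ and from $\gamma'$ in $A(S)$; once this is in hand, the remainder is essentially bookkeeping built on Proposition~\ref{prop:value-from-cl-alg}.
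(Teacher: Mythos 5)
Your proposal follows the same route as the paper: identify the quasi-simple pairs $\{\gamma,\gamma^{(\punctureP\punctureQ)}\}$ and $\{\gamma^{(\punctureP)},\gamma^{(\punctureQ)}\}$ via Lemma~\ref{lm:arcs at the mouth of small tubes}, then read off the quiddity entries from Proposition~\ref{prop:value-from-cl-alg} (using the $\punctureP$--$\punctureQ$ symmetric version of part~\ref{prop:value-from-cl-alg:itm:a}). The only difference is that you spell out the identification $\rho(M(\gamma))=\ag$, which the paper simply notes as $f(\gamma)=\ag$; your justification is consistent with the paper's construction, so the argument is correct.
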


\begin{proof}
By Lemma~\ref{lm:arcs at the mouth of small tubes}, the mouth of one of the rank $2$ tubes consists of the modules corresponding to tagged arcs $\gamma$ and $\gamma^{(\punctureP\punctureQ)}$, while the other consists of the modules with arcs $\gamma^{(\punctureP)}$ and $\gamma^{(\punctureQ)}$.

By Proposition~\ref{prop:value-from-cl-alg}\ref{prop:value-from-cl-alg:itm:b}, the two quiddity sequences are 
\[
(f(\gamma),f(\gamma^{(\punctureP\punctureQ)}))
\quad \mbox{and} \quad 
(f(\gamma^{(\punctureP)}),f(\gamma^{(\punctureQ)}))
\]

Noting that $f(\gamma)=a$ and using Proposition~\ref{prop:value-from-cl-alg}\ref{prop:value-from-cl-alg:itm:a}, 
these are equal to 
\[
(a,apq)
\quad \mbox{and} \quad 
(ap,aq)
\]
\end{proof}

\begin{corollary}\label{cor:quid-small-tubes}
With the above notation, the growth coefficient of each of the small tubes is 
equal to 
\[
a^2pq-2.
\]
\end{corollary}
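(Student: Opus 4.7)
The plan is to combine Lemma~\ref{lm:quid-small-tubes}, which gives us the explicit quiddity sequences $(a,apq)$ and $(ap,aq)$ of the two small tubes, with a direct application of the definition of growth coefficient (Definition~\ref{def:growth-coeff}) to a $2$-periodic quiddity sequence.

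First, I would set up the frieze with generic $2$-periodic quiddity sequence $(x,y)$, with row $0$ consisting of $1$s and row $1$ alternating $x,y,x,y,\dots$. Applying the diamond rule
\[
\xymatrix@R=.6pt@C=1pt{ & 1 \\ x && y \\ & d}
\]
I get $xy - 1\cdot d = 1$, so the entry $d$ in row $2$ immediately below any pair $(x,y)$ of adjacent row~$1$ entries is $xy-1$. By symmetry, every entry in row $2$ equals $xy-1$, so row $2$ is constant. Since the quiddity sequence has period $m=2$, the growth coefficient by Definition~\ref{def:growth-coeff} is the difference between any entry of row $2$ and the entry directly above it in row $m-2 = 0$; this gives
\[
s((x,y)) \;=\; (xy-1) - 1 \;=\; xy-2.
\]

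It then remains to substitute the two quiddity sequences from Lemma~\ref{lm:quid-small-tubes}. For the small tube with quiddity sequence $(a,apq)$, the growth coefficient is $a\cdot apq - 2 = a^2pq - 2$; for the small tube with quiddity sequence $(ap,aq)$, it is $ap\cdot aq - 2 = a^2pq - 2$. Both equal $a^2pq-2$, as claimed.

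There is essentially no obstacle here: the statement is a direct consequence of Lemma~\ref{lm:quid-small-tubes} together with the elementary computation above, and in particular does not require any further geometric or representation-theoretic input beyond what is already established. The only thing to double-check is that Definition~\ref{def:growth-coeff} is being applied with $m=2$ (so one compares row $2$ to row $0$, the constant row of $1$s), which matches the convention used in the paper.
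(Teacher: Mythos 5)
Your proof is correct and follows essentially the same route as the paper: both compute row $2$ of the frieze from the quiddity sequences $(a,apq)$ and $(ap,aq)$ via the diamond rule (obtaining the constant value $a^2pq-1$) and subtract the entry $1$ in row $0$ to get $a^2pq-2$. Your slight generalization to an arbitrary $2$-periodic sequence $(x,y)$ is a harmless repackaging of the same calculation.
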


\begin{proof}
The quiddity sequences of the two small tubes are 
$\quid=(a,apq)$ and $\quid=(ap,aq)$, respectively (Lemma~\ref{lm:quid-small-tubes}). In both cases, the corresponding frieze has entry $a^2pq-1$ in the row below. We indicate this for both friezes here: 
\[
\xymatrix@R=.6pt@C=4pt{
 1 && 1 && 1 && 1 && \ && 1 && 1 && 1 && 1 \\ 
 & a && apq && a && && && ap && aq && ap && \\
  && a^2pq-1 && a^2pq-1 && && && && a^2pq-1 && a^2pq-1 \\
 & \vdots && && \vdots   && && && \vdots && && \vdots \\
}
\]
The growth coefficient is equal to an entry in row $2$ 
minus the entry above it
in row $0$: 
\[
s(\quid)=a^2pq-1-1=a^2pq-2.
\]
\end{proof}

%
\subsection{The homogeneous tubes}~\label{sec:homogeneous}
Any homogeneous tube is of rank one, and we check that the number of submodules of its quasi-simple module is equal to $a^2pq-2$. So the growth coefficient is equal to $a^2pq-2$ 
(subtracting by $0$ because every entry in row $-1$ is $0$).

\begin{lemma}\label{lm:homogen}
The growth coefficient of any frieze arising from a homogeneous tube in affine type $D$ 
is $a^2pq-2$.    
\end{lemma}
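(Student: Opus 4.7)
The plan is to identify the quasi-simple module at the mouth of a homogeneous tube with the band module $M(\varepsilon)$ introduced in Section~\ref{sec:quasi-simple band module}, and then invoke the computation from the proof of Proposition~\ref{prop:growth-I}.

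First, I would recall that a homogeneous tube has rank $1$, so it has a unique quasi-simple module (up to isomorphism). Consequently the associated frieze is $1$-periodic with quiddity sequence of length $m=1$, consisting of the single entry $\rho(M)$ where $M$ is this quasi-simple. By the discussion in the introduction and Section~\ref{sec:quasi-simple band module}, under the passage from the twice-punctured disk $(S,\MM)$ to the annulus $A$ (with triangulation $T'$) this quasi-simple corresponds precisely to the band module $M(\varepsilon)$ of quasi-length $1$ associated to the non-contractible closed curve $\varepsilon$ parallel to the boundary of $A$; that is, its specialized Caldero--Chapoton value is $\rho(M(\varepsilon))$.

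Next, I would apply Definition~\ref{def:growth-coeff} with $m=1$: the growth coefficient equals the difference between any entry in row $m=1$ and the entry directly above it in row $m-2=-1$. Since row $-1$ is the trivial row of $0$'s, the growth coefficient reduces to the quiddity value itself, namely
\[
s(\quid) \;=\; \rho(M(\varepsilon)) - 0 \;=\; \rho(M(\varepsilon)).
\]

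Finally, the case-by-case computation carried out in the proof of Proposition~\ref{prop:growth-I} already established that $\rho(M(\varepsilon)) = \ag^2 pq - 2$ in each of the three cases \ref{notation:p-q:item:I}, \ref{notation:p-q:item:II}, \ref{notation:p-q:item:III} (with the convention, as in Remark~\ref{rem:a-is-1}, that $\ag=1$ in cases \ref{notation:p-q:item:II} and \ref{notation:p-q:item:III}). Combining this with the previous step yields the desired equality.

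The one step requiring care is the identification of the homogeneous quasi-simple of the affine type $D$ cluster category with the band module $M(\varepsilon)$ of the annulus $A(S)$: this is the bridge between the disk picture (which hosts the cluster category) and the annulus lift (on which the combinatorial computation was performed). Once this geometric/categorical matching is invoked from the literature, the remainder of the argument is a direct citation of Proposition~\ref{prop:growth-I} and Definition~\ref{def:growth-coeff}.
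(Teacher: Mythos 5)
Your proposal is correct and takes essentially the same route as the paper: Section~\ref{sec:homogeneous} likewise notes that a homogeneous tube has rank one, identifies its quasi-simple with the quasi-length-one band module $M(\varepsilon)$ of the non-contractible closed curve, and obtains the growth coefficient as the single quiddity entry minus the $0$ in row $-1$, with $\rho(M(\varepsilon))=a^2pq-2$ supplied by the computation in the proof of Proposition~\ref{prop:growth coefficient equals number of submodules of epsilon} and Proposition~\ref{prop:growth-I}. You merely make explicit the identification of the homogeneous quasi-simple with $M(\varepsilon)$, which the paper leaves as a remark in the introduction.
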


\vskip.3cm

%
\subsection{Main result}~\label{sec:main-thm}

We are now ready to prove our main theorem. 
Let $T$ be a triangulation of a twice-punctured disk corresponding to a cluster category of affine type $D$, 
and let $p$ and $q$ be the number of arcs of $T$ incident with punctures $\punctureP$ and $\punctureQ$ respectively in  a small neighborhood (Notation~\ref{notation:p-q}). 
Let $\gamma$ denote the arc with endpoints at $\punctureP$ and at $\punctureQ$. If $\gamma$ is not in $T$, let  $M(\gamma)$ be the indecomposable module associated to $\gamma$. If $\gamma$ is in $T$, we set $M(\gamma)$ to be the zero module.

\begin{theorem}\label{theo:main}

The friezes of all tubes of a cluster category of affine type $D$ have the same growth coefficient. More precisely, if $F$ is the frieze of a tube in this category, its growth coefficient is equal to  
\[
a^2pq-2
\] where $a=\rho(M(\gamma))$.
\end{theorem}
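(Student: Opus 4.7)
The plan is to assemble the three growth-coefficient computations already carried out in the paper — Proposition~\ref{prop:growth-I} for the tube $\mathcal{T}_1$, Corollary~\ref{cor:quid-small-tubes} for the two rank-$2$ tubes $\mathcal{T}_2, \mathcal{T}_3$, and Lemma~\ref{lm:homogen} for the homogeneous tubes — each of which yields the same value $a^2pq-2$. The only remaining work is to verify that the parameter $a$ used in those statements is consistent with the definition $a=\rho(M(\gamma))$ appearing in Theorem~\ref{theo:main}, where $\gamma$ is the arc with endpoints at $\punctureP$ and $\punctureQ$.

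First, I would invoke Lemma~\ref{lem:remove-peripheral} to reduce to the case where $T$ has no peripheral arcs; this does not alter the growth coefficient of the frieze from $\mathcal{T}_1$, and it leaves the other tubes unaffected since they depend only on arcs meeting the punctures. We are then in one of the three configurations (I), (II), (III) of Notation~\ref{notation:p-q}, and the integers $p, q$ of the theorem are exactly those defined in Section~\ref{sec:D-to-A}.

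Second, I would reconcile the various definitions of $a$. In configuration (I), the arc $\gamma$ is not in $T$, and the construction of the annulus in Section~\ref{sec:D-to-A} cuts along $\gamma$, producing the two peripheral arcs $\gamma', \gamma''$ in $T'$ whose quivers are isomorphic as abstract directed graphs to $\Quiver_\gamma$. Hence
\[
\rho(M(\gamma)) \;=\; \rho(M(\gamma')) \;=\; \rho(M(\gamma''))
\]
so the number $a$ of Notation~\ref{notation:value-a} equals $\rho(M(\gamma))$. In configurations (II) and (III), $\gamma$ lies in $T$, so $M(\gamma)$ is the zero module and $\rho(M(\gamma))=1$, matching the convention $a=1$ stated in Remark~\ref{rem:a-is-1}. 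In all three configurations, the symbol $a$ used in Sections~\ref{sec:big-tube} and~\ref{sec:small-tubes} coincides with $\rho(M(\gamma))$.

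Third, I would conclude by quoting the three results. By Lemma~\ref{lm:quid-peripheral-ann}, the frieze of $\mathcal{T}_1$ has the same quiddity sequence as the frieze from the outer boundary of $T'$, so Proposition~\ref{prop:growth-I} gives its growth coefficient as $a^2pq-2$. Corollary~\ref{cor:quid-small-tubes} gives the same value $a^2pq-2$ for both rank-$2$ tubes $\mathcal{T}_2$ and $\mathcal{T}_3$, and Lemma~\ref{lm:homogen} gives it for every homogeneous tube. Hence every tube in the cluster category of affine type $D$ yields a frieze with growth coefficient $a^2pq-2$, proving the theorem. The only real obstacle is the bookkeeping of step two — confirming that the $a$ introduced contextually in different parts of the paper all refer to the same quantity $\rho(M(\gamma))$ — after which everything else is an immediate citation.
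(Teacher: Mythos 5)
Your proposal is correct and follows essentially the same route as the paper: the proof there likewise reduces to citing Proposition~\ref{prop:growth-I} (via the annulus construction and the equality of quiddity sequences), Corollary~\ref{cor:quid-small-tubes}, and Lemma~\ref{lm:homogen}, with the identification $a=\rho(M(\gamma))$ (and $a=1$ when $\gamma\in T$) handled just as you describe. Your explicit reconciliation of the various appearances of $a$ is a slightly more careful bookkeeping of what the paper leaves implicit, but it is not a different argument.
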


\begin{proof}
The statement follows from our results of Sections~\ref{sec:outer-boundary},~\ref{sec:small-tubes} and~\ref{sec:homogeneous}: 
\begin{enumerate}[(i)]
\item 
Let $F$ be the infinite frieze from the tube $\mathcal{T}_1$ corresponding to the boundary of $T$, and  
let $\quid$ be its quiddity sequence. 
In Section~\ref{sec:D-to-A}, we construct a triangulated annulus $T'$ whose outer boundary corresponds to a frieze $F'$ with quiddity sequence $\quid$; since a frieze is uniquely determined by its quiddity sequence, the friezes $F'$ and $F$ are equal. 
Proposition~\ref{prop:growth-I} tells us that the growth coefficient $s(\quid)$  is $a^2pq-2$  (noting that $a=1$ if the triangulation contains an arc connecting $\punctureP$ and $\punctureQ$). 
\item 
By Corollary~\ref{cor:quid-small-tubes}, the friezes of the two rank two tubes also have this growth coefficient. 
\item 
Finally, for homogeneous tubes, the statement is given in Lemma~\ref{lm:homogen}. 
\end{enumerate}

\end{proof}

\bibliographystyle{alpha}
\bibliography{winart3} 

\end{document}